\documentclass[a4paper,12pt]{amsart}
\usepackage[ansinew]{inputenc}
\usepackage[english]{babel} 
\usepackage[T1]{fontenc}
\usepackage{csquotes}
\usepackage{amsmath,amssymb,amsthm, mathtools, bbm, ifxetex, amsrefs, xcolor }
\usepackage{trfsigns}
\usepackage{xspace}
\usepackage{geometry}
\usepackage{comment}
\usepackage{enumerate}
\usepackage{hyperref}

\clubpenalty10000
\widowpenalty10000
\displaywidowpenalty=10000
\geometry{a4paper,left=30mm,right=30mm, top=3cm, bottom=3cm}
\numberwithin{equation}{section}

\newtheorem{satz}{Theorem}[section]
\newtheorem{proposition}[satz]{Proposition}
\newtheorem{lemma}[satz]{Lemma}
\newtheorem{korollar}[satz]{Corollary}

\newtheorem{bemerkung}[satz]{Remark}
\newtheorem{definition}[satz]{Definition}
\newtheorem{condition}[satz]{Conditions}
\theoremstyle{definition}

\title{Higher Order Concentration of Measure}

\author{S.\,G. Bobkov}
\address{Sergey G. Bobkov,
School of Mathematics, University of Minnesota, Minneapolis, USA;
National Research University Higher School of Economics, Russian Federation
}
\email{bobkov@math.umn.edu}

\author{F. G\"otze}
\address{Friedrich G\"otze, Faculty of Mathematics, Bielefeld University, 
Bielefeld, Germany}
\email{goetze@math.uni-bielefeld.de}
\author{H. Sambale}
\address{Holger Sambale, Faculty of Mathematics, Bielefeld University, 
Bielefeld, Germany}
\email{hsambale@math.uni-bielefeld.de}

\begin{document}

\subjclass{Primary 60E15, 60F10; secondary 41A10, 41A80}

\keywords{Concentration of measure phenomenon, logarithmic Sobolev inequalities, 
Hoeffding decomposition, functions on the discrete cube, Efron--Stein inequality}

\thanks{This research was supported by CRC 1283. The work of S. G. Bobkov was supported
by NSF grant DMS-1612961 and by the Russian Academic Excellence Project '5-100'. }

\begin{abstract}
We study sharpened forms of the concentration of measure phenomenon typically 
centered at stochastic expansions of order $d-1$ for any $d \in \mathbb{N}$.
The bounds are based on $d$-th order derivatives or difference operators. 
In particular, we consider deviations of functions of independent random variables
and differentiable functions over probability measures satisfying
a logarithmic Sobolev inequality, and functions on the unit sphere. 
Applications include concentration inequalities for $U$-statistics
as well as for classes of symmetric functions 
via polynomial approximations on the sphere (Edgeworth-type expansions).
\end{abstract}

\date{\today}

\maketitle

\section{Introduction}

In this article, we study higher order versions of the concentration of 
measure phenomenon. Referring to the use of derivatives or difference 
operators of higher order, say $d$, the notion of 
\textit{higher} order concentration has several aspects. In particular,
instead of the classical problem about deviations of $f$ around the mean 
$\mathbb{E}f$, one may consider potentially smaller fluctuations of 
$f - \mathbb{E}f - f_1 - \ldots - f_d$, where $f_1, \ldots, f_d$ are 
``lower order terms'' of $f$ with respect to a suitable decomposition,
such as a Taylor-type decomposition or the Hoeffding decomposition of $f$.

Starting with the works of Milman in local theory of Banach spaces, and 
of Borell, Sudakov, and Tsirelson within the framework of Gaussian processes, 
the concentration of measure phenomenon has been intensely studied during 
the past decades. This study includes important contributions due to Talagrand 
and other researchers in the 1990s, cf. Milman and Schechtman \cite{M-S},
Talagrand \cite{T}, Ledoux \cite{L1}, \cite{L2}, \cite{L3};
a more recent survey is authored by Boucheron, Lugosi and Massart \cite{B-L-M}.
 
As another previous work, let us mention Adamczak and 
Wolff \cite{A-W}, who exploited certain Sobolev-type inequalities or subgaussian
tail conditions to derive exponential tail inequalities for
functions with bounded higher-order derivatives (evaluated in terms of 
tensor-product matrix norms). While in \cite{A-W}, concentration around 
the mean is studied, the idea of sharpening concentration inequalities for 
Gaussian measures by requiring orthogonality to linear functions 
also appears in Wolff \cite{W} as well as in
Cordero-Erausquin, Fradelizi and Maurey \cite{CE-F-M}.

Our research started with second order results for functions on 
the $n$-sphere orthogonal to linear functions \cite{B-C-G},
with an approach which was continued in \cite{G-S} in presence of logarithmic 
Sobolev inequalities. This includes discrete models as well as 
differentiable functions on open subsets of $\mathbb{R}^n$. 
Here, we adapt in particular Sobolev type inequalities 
introduced by Boucheron, Bousquet, Lugosi and Massart \cite{B-B-L-M},
and thus extend some of the results from \cite{G-S} to arbitrary higher orders. 
Developing the algebra of 
higher order difference operators, we moreover came across a higher order
extension of the well-known Efron--Stein inequality.

\subsection{Functions of independent random variables}

Let $X = (X_1,\ldots,X_n)$ be a random vector in $\mathbb{R}^n$ with 
independent components, defined on some probability space 
$(\Omega, \mathcal{A}, \mathbb{P})$.
First, we state higher order exponential inequalities in terms of
the difference operator which is frequently used in the method of bounded 
differences. 

Let $(\bar{X}_1,\ldots, \bar{X}_n)$ be an independent copy of 
$X$. Given $f(X) \in L^\infty(\mathbb{P})$, define
$$
T_if(X) = T_i f = 
f(X_1, \ldots, X_{i-1}, \bar{X}_i,\linebreak[2] X_{i+1},\ldots, X_n), \qquad
i = 1, \ldots, n,
$$ 
\begin{equation}\label{h}
\mathfrak{h}_i f(X) = \frac{1}{2}\, \lVert f(X) - T_if(X) \rVert_{i, \infty},
\qquad \mathfrak{h} f = (\mathfrak{h}_1 f, \ldots, \mathfrak{h}_n f),
\end{equation}
where $\lVert \cdot \rVert_{i, \infty}$ denotes the $L^\infty$-norm with 
respect to $(X_i,\bar{X}_i)$. Depending on the random variables $X_j$, 
$j \ne i$,  $\mathfrak{h}_i f$ thus provides a uniform upper bound on 
the differences with respect to the $i$-th coordinate (up to constant).
Based on $\mathfrak{h}$, it is possible to define higher order difference operators 
$\mathfrak{h}_{i_1 \ldots i_d}$ ($d \in \mathbb{N}$) by setting
\begin{align}
\label{hd}
\begin{split}
\mathfrak{h}_{i_1 \ldots i_d}f(X) 
  = \; &
\frac{1}{2^d}\,
\Big\lVert\, \prod_{s=1}^d\, 
(Id - T_{i_s})f(X) \Big\rVert_{i_1, \ldots, i_d, \infty}\\
   = \; &
\frac{1}{2^d}\,
\Big\lVert\, f(X) + \sum_{k=1}^d\, (-1)^k \sum_{1 \leq s_1 < \ldots < s_k \leq d}
T_{i_{s_1} \ldots i_{s_k}}f(X)\, \Big\rVert_{i_1, \ldots, i_d, \infty},
\end{split}
\end{align}
where $T_{i_1 \ldots i_d} = T_{i_1} \circ \ldots \circ T_{i_d}$, and 
$\lVert \cdot \rVert_{i_1, \ldots, i_d, \infty}$ denotes the $L^\infty$-norm 
with respect to $X_{i_1}, \ldots, X_{i_d}$ and 
$\bar X_{i_1}, \ldots, \bar X_{i_d}$. For instance, 
$$
\mathfrak{h}_{ij} f = \frac{1}{4}\,
\lVert f - T_if - T_jf + T_{ij}f \rVert_{i,j,\infty} \quad 
{\rm for} \  i \ne j. 
$$
Based on \eqref{hd}, we define hypermatrices of $d$-th order differences 
as follows:
\begin{align}\label{Hessediskret}
\big(\mathfrak{h}^{(d)}f(X)\big)_{i_1 \ldots i_d} = 
\begin{cases} 
\mathfrak{h}_{i_1 \ldots i_d}f(X), & 
\text{if $i_1, \ldots, i_d$ are distinct}, \\ 0, & \text{else}. 
\end{cases}
\end{align}
For short, we freely write $\mathfrak{h}^{(d)}f$ instead of 
$\mathfrak{h}^{(d)}f(X)$. Since $T_{ii} \equiv T_i$, we necessarily have 
$\mathfrak{h}_{ii} f = \frac{1}{2}\, \mathfrak{h}_i f$. 
Therefore, removing the $d$-th order differences in which some indexes appear 
more than once can be interpreted as removing lower order differences. 

Moreover, define 
$|\mathfrak{h}^{(d)}f|_\mathrm{HS}$ to be the Euclidean norm of 
$\mathfrak{h}^{(d)}f$ regarded as an element of $\mathbb{R}^{n^d}$. 
For instance, $|\mathfrak{h}^{(1)}f|_\mathrm{HS}$ is
the Euclidean norm of $\mathfrak{h} f$, and 
$|\mathfrak{h}^{(2)}f|_\mathrm{HS}$ is the Hilbert--Schmidt norm of the 
``Hessian'' $\mathfrak{h}^{(2)}f$. Also, put
\begin{equation}\label{LpNorm1}
\lVert \mathfrak{h}^{(d)}f \rVert_{\mathrm{HS},p} = 
\left( \mathbb{E}\, |\mathfrak{h}^{(d)}f|_\mathrm{HS}^p \right)^{1/p}, \qquad
p \in (0, \infty].
\end{equation}
Using these notations, the following result holds for any fixed integer $d = 1,\dots,n$.

\vskip5mm
\begin{satz}\label{allgem}
Let $f = f(X)$ be in $L^\infty(\mathbb{P})$ with $\mathbb{E} f = 0$. 
If the conditions
\begin{equation}\label{Cond1}
\lVert \mathfrak{h}^{(k)} f \rVert_{\mathrm{HS}, 2} \le 1\qquad 
(k = 1, \ldots, d-1)
\end{equation}
and
\begin{equation}\label{Cond2}
\lVert\mathfrak{h}^{(d)} f\rVert_{\mathrm{HS},\infty} \le 1
\end{equation}
are satisfied, then, for some universal constant 
$c > 0$,
$$
\mathbb{E} \exp\left(c\, |f|^{2/d}\right) \le 2.
$$
\end{satz}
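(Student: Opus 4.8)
The plan is to reduce the assertion to the polynomial moment bounds
\[
\|f\|_p \;\le\; (Cp)^{d/2} \qquad (p \ge 2)
\]
for a suitable constant $C$; such growth of the $L^p$-norms is well known to be equivalent to an estimate of the desired type. Indeed, expanding $\mathbb{E}\exp(c|f|^{2/d}) = \sum_{m \ge 0} \frac{c^m}{m!}\,\|f\|_{2m/d}^{2m/d}$, the moment bounds together with $m! \ge (m/e)^m$ turn the tail $m \ge d$ into a convergent geometric series $\sum_m (2Cec/d)^m$, while for $1 \le m < d$ one uses $\|f\|_{2m/d} \le \|f\|_2$; hence $\mathbb{E}\exp(c|f|^{2/d}) \le 2$ as soon as $c > 0$ is small enough.

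To produce these moment bounds, the engine is a moment inequality of Boucheron, Bousquet, Lugosi and Massart \cite{B-B-L-M}, which for uniformly bounded differences takes the form: there is a universal $\kappa > 0$ such that, for every $g = g(X) \in L^\infty(\mathbb{P})$ and every real $q \ge 2$,
\[
\|g - \mathbb{E}g\|_q \;\le\; \|g - \mathbb{E}g\|_2 + \sqrt{\kappa q}\; \big\| \, |\mathfrak{h}^{(1)}g| \, \big\|_q ,
\]
obtained from the cited estimates after bounding $(g - T_i g)_\pm \le \|g - T_i g\|_{i,\infty} = 2\,\mathfrak{h}_i g$ and adding the contributions of the positive and negative parts. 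We pair it with the Efron--Stein bound $\mathrm{Var}(g) \le \tfrac12 \sum_i \mathbb{E}(g - T_i g)^2 \le 2\,\|\mathfrak{h}^{(1)}g\|_{\mathrm{HS},2}^2$. Writing $g_0 := f$ and $g_k := |\mathfrak{h}^{(k)}f|_{\mathrm{HS}}$ for $k \ge 1$, these two facts give, via \eqref{Cond1} and \eqref{Cond2}, that $\|f\|_2 \le \sqrt{2}$, that $\mathbb{E} g_k \le \|\mathfrak{h}^{(k)}f\|_{\mathrm{HS},2} \le 1$ for $k = 1, \dots, d-1$, and (using the chain rule of the next paragraph) that $\mathrm{Var}(g_k) \le 2\,\|\mathfrak{h}^{(k+1)}f\|_{\mathrm{HS},2}^2 \le 2$ for the same range of $k$, the last inequality invoking \eqref{Cond2} when $k = d-1$.

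The second ingredient is a pointwise ``chain rule'' linking consecutive orders,
\[
\big| \mathfrak{h}^{(1)} g_k \big| \;\le\; g_{k+1} \qquad (k = 0, 1, 2, \dots) ,
\]
with equality at $k = 0$. It rests on the sub-commutation $\mathfrak{h}_j \mathfrak{h}_{i_1 \dots i_k} f \le \mathfrak{h}_{i_1 \dots i_k j} f$ for $j \notin \{i_1, \dots, i_k\}$, which follows by applying the reverse triangle inequality $\big| \|F\|_\infty - \|T_j F\|_\infty \big| \le \|(Id - T_j) F\|_\infty$ to $F = \prod_{s=1}^{k}(Id - T_{i_s})f$ and using that the operators $Id - T_t$ act on distinct coordinates, hence commute. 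Since $\mathfrak{h}_{i_1 \dots i_k}f$ does not depend on $X_{i_1}, \dots, X_{i_k}$, those entries of $\mathfrak{h}^{(k)}f$ having $j$ among their indices are left unchanged by $T_j$, so the reverse triangle inequality in $\ell^2$ yields $\mathfrak{h}_j g_k \le \big( \sum (\mathfrak{h}_{i_1 \dots i_k j} f)^2 \big)^{1/2}$, the sum running over ordered $k$-tuples of distinct indices avoiding $j$; squaring and summing over $j$ reconstitutes exactly the sum over ordered $(k+1)$-tuples of distinct indices, namely $g_{k+1}^2$. No combinatorial factors appear, precisely because $|\cdot|_{\mathrm{HS}}$ already sums over ordered tuples.

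Combining everything and setting $a_k = \|g_k\|_p$, the moment inequality, the chain rule, and the bounds of the second paragraph yield
\[
\|f\|_p \le \sqrt{2} + \sqrt{\kappa p}\; a_1 , \qquad a_k \le (1 + \sqrt{2}) + \sqrt{\kappa p}\; a_{k+1} \quad (k = 1, \dots, d-1) ,
\]
while \eqref{Cond2} forces $a_d = \big\| \, |\mathfrak{h}^{(d)}f|_{\mathrm{HS}} \, \big\|_p \le \|\mathfrak{h}^{(d)}f\|_{\mathrm{HS},\infty} \le 1$. Unwinding this finite recursion gives $\|f\|_p \le (Cp)^{d/2}$ for all $p$ beyond a universal threshold $p_0$, and monotonicity of the $L^p$-norms covers the residual range $2 \le p \le p_0$; together with the first paragraph, this proves the theorem. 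The step I expect to be the main obstacle is the chain rule: establishing the sub-commutation and the $\ell^2$ reverse-triangle step with the correct (in fact absent) constants requires careful bookkeeping — keeping track of which coordinates each $\mathfrak{h}_{i_1 \dots i_k}f$ depends on, using the commutativity of the $Id - T_t$, and exploiting that entries with repeated indices are zeroed out in $\mathfrak{h}^{(d)}f$ — and it is exactly this bookkeeping that lets the nested application of the Boucheron--Bousquet--Lugosi--Massart inequality telescope cleanly into $\|f\|_p \le (Cp)^{d/2}$.
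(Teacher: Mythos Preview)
Your proposal is correct and follows essentially the same route as the paper: the key ingredients are the Boucheron--Bousquet--Lugosi--Massart moment inequality (the paper's Corollary~2.5) and the pointwise chain rule $|\mathfrak{h}\,|\mathfrak{h}^{(k)}f|_{\mathrm{HS}}| \le |\mathfrak{h}^{(k+1)}f|_{\mathrm{HS}}$ (the paper's Lemma~2.2), iterated to produce $\|f\|_p \le (Cp)^{d/2}$ and then converted to the exponential bound. The only cosmetic difference is that the paper uses the moment inequality in the form $\|g\|_p \le \|g\|_2 + A\,\|\mathfrak{h}g\|_p$ directly on $g_k = |\mathfrak{h}^{(k)}f|_{\mathrm{HS}}$, whereas you split $\|g_k\|_p \le \mathbb{E}g_k + \sqrt{\mathrm{Var}(g_k)} + A\,\|\mathfrak{h}g_k\|_p$ and bound the mean and variance separately via \eqref{Cond1} and Efron--Stein; this leads to the same recursion up to the value of the additive constant.
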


\vskip2mm
Here, a possible choice is $c = 1/(208\, e)$.

In the case $d=1$, \eqref{Cond1} does not contain any constraint, while
\eqref{Cond2} means the boundedness of fluctuations of $f$ along every 
coordinate. Here we arrive at the well-known assertion on  Gaussian 
deviations of $f(X)$. For growing $d$, the conclusion is getting somewhat 
weaker, however it holds as well under the potentially much weaker 
assumptions \eqref{Cond1}--\eqref{Cond2}. To interpret them in case 
$d \geq 2$, let us recall the notion of a Hoeffding decomposition, 
introduced by Hoeffding in 1948 \cite{Hoe}. Given a function 
$f(X) \in L^1(\mathbb{P})$, it is the unique decomposition
\begin{align}\label{Hoeffding}
f(X_1, \ldots, X_n) 
 & = 
\mathbb{E} f(X) + \sum_{1 \leq i \leq n} h_i(X_i) + 
\sum_{1 \leq i < j \leq n} h_{ij}(X_i, X_j) + \ldots\\
 & = 
f_0 + f_1 + f_2 + \ldots + f_n\notag
\end{align}
such that $\mathbb{E}_{i_s} h_{i_1 \ldots i_k}(X_{i_1}, \ldots, X_{i_k}) = 0$ 
whenever $1 \le i_1 < \ldots < i_k \le n$, $s = 1, \ldots, k$, 
where $\mathbb{E}_i$ denotes the expectation with respect to $X_i$.
The sum $f_d$ is called Hoeffding term of degree $d$ or simply $d$-th 
Hoeffding term of $f$. Provided that $f(X) \in L^2(\mathbb{P})$, the system
$\{f_i(X)\}_{i = 0}^n$ forms an orthogonal decomposition of $f(X)$ 
in $L^2(\mathbb{P})$.

It is not hard to see that
$
\mathfrak{h}_{i_1 \ldots i_k} f = \mathfrak{h}_{i_1 \ldots i_k} \big(\sum_{i=k}^{n} f_i\big)
$
whenever $i_1 < \ldots < i_k$, $k \leq d$.
In this sense, \eqref{Cond1} controls the lower order Hoeffding terms 
$f_1, \ldots, f_{d-1}$, while the behaviour of $\sum_{i=d}^{n} f_i$ is mainly
controlled by \eqref{Cond2}.
The relationship between these two conditions may be 
illustrated by considering a special class  of functions
$f$ like \emph{multilinear polynomials}, that is
\begin{align}
\label{multilinear}
f(X_1, \ldots, X_n) 
 & = 
\alpha_0 + \sum_{i=1}^n \alpha_{i} X_i + \sum_{i<j} \alpha_{ij} X_i X_j + \ldots\\
 & = 
f_0 + f_1 + f_2 + \ldots + f_n \qquad \quad (\alpha_I \in \mathbb{R}).
\notag
\end{align}

\vskip5mm
\begin{proposition}
\label{multPol}
Let $X_1, \ldots, X_n$ be bounded and such that $\mathbb{E} X_i = 0$, 
$\mathbb{E} X_i^2 = 1$ for $i = 1, \ldots, d$.
If $f(X)$ is a multilinear polynomial \eqref{multilinear} of the form
$
f = \sum_{k=d}^{n} f_k,
$
then \eqref{Cond2} implies \eqref{Cond1}.
\end{proposition}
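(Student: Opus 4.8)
The plan is to compute $\mathfrak{h}^{(k)}f$ and $\mathfrak{h}^{(d)}f$ explicitly for a multilinear polynomial and to reduce both \eqref{Cond1} and \eqref{Cond2} to inequalities between elementary symmetric functions of the numbers $\Delta_i := \lVert X_i - \bar X_i \rVert_\infty$. Write $f = \sum_{|I| \ge d} \alpha_I X_I$ with $X_I = \prod_{i \in I} X_i$. Since $(Id - T_j)X_I$ equals $(X_j - \bar X_j)X_{I \setminus \{j\}}$ when $j \in I$ and vanishes otherwise, for distinct $i_1,\dots,i_k$ with $k \le d$ and $K = \{i_1,\dots,i_k\}$ one gets
\[
\prod_{s=1}^k (Id - T_{i_s})f = \Big(\prod_{s=1}^k (X_{i_s} - \bar X_{i_s})\Big)\, g_K, \qquad g_K := \sum_{I \supseteq K} \alpha_I X_{I \setminus K},
\]
where $g_K$ depends on neither the variables indexed by $K$ nor on any $\bar X$. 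Taking the $L^\infty$-norm in the $K$-variables factorizes, giving $\mathfrak{h}_{i_1 \dots i_k}f = 2^{-k}|g_K|\prod_{i \in K}\Delta_i$. By the mean-zero, variance-one normalization of the $X_i$, the monomials $X_{I \setminus K}$, $I \supseteq K$, are orthonormal in $L^2$, so $\mathbb{E}[g_K^2] = \sum_{I \supseteq K}\alpha_I^2$; summing over ordered tuples of distinct indices (each unordered $K$ occurring $k!$ times, cf. \eqref{Hessediskret}) and writing $t_i := \Delta_i^2/4$ yields
\[
\lVert \mathfrak{h}^{(k)}f \rVert_{\mathrm{HS},2}^2 = k!\sum_I \alpha_I^2\, e_k\big((t_i)_{i \in I}\big),
\]
where $e_k$ is the $k$-th elementary symmetric polynomial; the same identity with $k$ replaced by $d$ holds for $\mathbb{E}\,|\mathfrak{h}^{(d)}f|_{\mathrm{HS}}^2$.

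The next step is to observe that \eqref{Cond2} is needed only through its second-moment consequence $\lVert \mathfrak{h}^{(d)}f \rVert_{\mathrm{HS},\infty}^2 \ge \mathbb{E}\,|\mathfrak{h}^{(d)}f|_{\mathrm{HS}}^2 = d!\sum_I \alpha_I^2\, e_d\big((t_i)_{i \in I}\big)$, so \eqref{Cond2} already forces $d!\sum_I \alpha_I^2\, e_d((t_i)_{i \in I}) \le 1$. Comparing with the displayed identity for $\mathfrak{h}^{(k)}$, it suffices to prove, for each fixed $I$ with $|I| = m \ge d$ and $1 \le k \le d-1$, the purely combinatorial inequality $k!\,e_k\big((t_i)_{i \in I}\big) \le d!\,e_d\big((t_i)_{i \in I}\big)$. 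This is false for small $t_i$, and here the normalization re-enters decisively: for a bounded $X_i$ with $\mathbb{E}X_i = 0$ and $\mathbb{E}X_i^2 = 1$, Popoviciu's inequality gives $1 = \mathrm{Var}(X_i) \le \tfrac14\big(\operatorname{diam}(\operatorname{supp}X_i)\big)^2 = \tfrac14\Delta_i^2$, i.e. $t_i \ge 1$.

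Finally I would prove $k!\,e_k(t) \le d!\,e_d(t)$ for $t_i \ge 1$ and $m \ge d > k$ by double counting: writing $\binom{d}{k}e_d(t) = \sum_{|K|=k}\sum_{J \supseteq K,\,|J|=d}\prod_{i \in J}t_i$ and using $\prod_{i \in J \setminus K}t_i \ge 1$ gives $\binom{d}{k}e_d(t) \ge \binom{m-k}{d-k}e_k(t)$, and then $d!\binom{m-k}{d-k} \ge k!\binom{d}{k}$ because $(d-k)!\binom{m-k}{d-k} = (m-k)(m-k-1)\cdots(m-d+1) \ge 1$. Combining, $\lVert \mathfrak{h}^{(k)}f \rVert_{\mathrm{HS},2}^2 \le d!\sum_I \alpha_I^2\, e_d\big((t_i)_{i \in I}\big) \le 1$ for every $k = 1,\dots,d-1$, which is \eqref{Cond1}. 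The main obstacle is the middle step: recognizing that \eqref{Cond2} may be weakened to a variance bound, so that both sides become linear in the $\alpha_I^2$, and that the resulting symmetric-function inequality genuinely requires the lower bound $\Delta_i \ge 2$ — which the variance-one normalization supplies, by exactly the right (tight) margin, via Popoviciu's inequality.
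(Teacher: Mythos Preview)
Your proof is correct and follows essentially the same route as the paper: factor $\prod_s (Id - T_{i_s})f$ for a multilinear polynomial, obtain
$\lVert \mathfrak{h}^{(k)}f\rVert_{\mathrm{HS},2}^2 = \sum_I \alpha_I^2 \sum_{K\subset I,\,|K|=k,\ \mathrm{ordered}} \prod_{j\in K}(\mathfrak{h}X_j)^2$,
and use the normalization $\mathbb{E}X_i=0$, $\mathbb{E}X_i^2=1$ to get $\mathfrak{h}X_i\ge 1$ (your $t_i\ge 1$), which makes the expression monotone in $k$ and hence bounded by $\lVert \mathfrak{h}^{(d)}f\rVert_{\mathrm{HS},2}^2\le \lVert \mathfrak{h}^{(d)}f\rVert_{\mathrm{HS},\infty}^2$. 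The only differences are cosmetic: the paper establishes the chain $\lVert \mathfrak{h}^{(k)}f\rVert_{\mathrm{HS},2}\le \lVert \mathfrak{h}^{(k+1)}f\rVert_{\mathrm{HS},2}$ one step at a time (which is the special case $d=k+1$ of your symmetric-function inequality), and simply asserts $\mathfrak{h}X_i\ge 1$ where you invoke Popoviciu explicitly.
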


\vskip2mm
Note that under the conditions of Proposition \ref{multPol}, the Hoeffding 
decomposition of $f$ can be read off the polynomial structure:
$h_{i_1 \ldots i_k} (X_{i_1}, \ldots, X_{i_k}) = 
\alpha_{i_1 \ldots i_k} X_{i_1} \cdots X_{i_k}$.

In particular, let $X_1, \ldots, X_n$ be independent Rademacher variables, i.\,e. 
$X_i$'s have distribution $\frac{1}{2} \delta_{+1} + \frac{1}{2} \delta_{-1}$,
where $\delta_x$ denotes the Dirac measure in $x$. In this case, 
any function $f(X)$ can be written as a multilinear polynomial with coefficients 
$$
\alpha_{i_1 \ldots i_k} = \mathbb{E} f(X_1, \ldots, X_n)\,X_{i_1} \cdots X_{i_k},
\qquad 1 \leq i_1 < \ldots < i_k \leq n.
$$
This representation is known as \emph{Fourier--Walsh expansion} of $f$.
Consequently, Theorem \ref{allgem} yields a $d$-th order concentration result 
on the discrete hypercube, and if $f$ has Fourier--Walsh expansion of type
$f = \sum_{k=d}^{n} f_k$ satisfying \eqref{Cond2}, the concentration bound 
given in Theorem \ref{allgem} holds.

Using Rademacher variables in Theorem \ref{allgem} gives rise to a concentration 
inequality for $U$-statistics with completely degenerate kernel functions. 
There are many results on the distributional properties of $U$-statistics 
(cf. de la Pe\~{n}a and Gin\'e \cite{D-G} for an overview). Starting 
with Hoeffding's inequalities (e.\,g. \cite{D-G}, Theorem 4.1.8), we especially 
refer to the results by Arcones and Gin\'e \cite{A-G} and Major \cite{M}.
By combining elements of the proof of Theorem \ref{allgem} and a classical result 
on randomized $U$-statistics by de la Pe\~{n}a and Gin\'e, we
arrive at the following:

\vskip5mm
\begin{korollar}
\label{U-statistics}
Let $X_1, \ldots, X_n$ be i.i.d. random elements in a measurable space 
$(S, \mathcal{S})$, and $h$ be a measurable function on $S^d$ 
$(1 \leq d \leq n)$ such that $|h| \leq M$ for some constant $M$. 
If $h$ is completely degenerate, i.\,e. 
$\mathbb{E}_i\, h(X_1, \ldots, X_d) = 0$ for all $i = 1, \ldots, d$, 
then, for some positive constant $c = c(d,M)$, the $U$-statistic
$$
f(X_1, \ldots, X_n) \, = \, \frac{(n-d)!}{n!} 
\sum_{i_1 \ne \ldots \ne i_d} h(X_{i_1}, \ldots, X_{i_d})
$$
satisfies
$$
\mathbb{E} \exp\left(c n\, |f|^{2/d}\right) \, \le \, 2.
$$
\end{korollar}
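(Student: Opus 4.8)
The plan is to reduce the $U$-statistic to a multilinear-type object on the discrete cube and then invoke Theorem~\ref{allgem}. The classical tool here is the decoupling/randomization result of de la Pe\~{n}a and Gin\'e: for a completely degenerate kernel $h$, the $U$-statistic $f(X)$ has the same distributional behaviour (up to a constant depending only on $d$) as the \emph{randomized} $U$-statistic
$$
\widetilde f \; = \; \frac{(n-d)!}{n!}\,\sum_{i_1\ne\ldots\ne i_d}\varepsilon_{i_1}\cdots\varepsilon_{i_d}\, h(X_{i_1},\ldots,X_{i_d}),
$$
where $\varepsilon_1,\ldots,\varepsilon_n$ are independent Rademacher variables independent of the $X_i$. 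More precisely, one has a comparison of the form $\mathbb{E}\,\Phi(|f|)\le \mathbb{E}\,\Phi(C_d|\widetilde f|)$ for convex increasing $\Phi$; applying this with $\Phi(t)=\exp(c't^{2/d})$ after first conditioning on $(X_1,\ldots,X_n)$ will give the claim once we control $\widetilde f$ as a function of the Rademachers alone.

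Conditionally on $X=(X_1,\ldots,X_n)$, the function $\varepsilon\mapsto\widetilde f(\varepsilon)$ is a multilinear polynomial (Fourier--Walsh expansion) that is homogeneous of degree exactly $d$, i.e.\ of the form $\sum_{k=d}^n f_k$ with only the top term present. Hence by the Rademacher discussion following Proposition~\ref{multPol}, it suffices to verify condition \eqref{Cond2}, namely $\lVert \mathfrak{h}^{(d)}\widetilde f\rVert_{\mathrm{HS},\infty}\le \mathrm{const}$, and then Theorem~\ref{allgem} applies (conditionally) and delivers $\mathbb{E}_\varepsilon\exp(c\,|\widetilde f-\mathbb{E}_\varepsilon\widetilde f|^{2/d})\le 2$; note $\mathbb{E}_\varepsilon\widetilde f=0$ by degeneracy. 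For a degree-$d$ multilinear polynomial the $d$-th order difference operator simply extracts the coefficients: $\mathfrak{h}_{i_1\ldots i_d}\widetilde f$ equals (up to the normalizing powers of $2$) $|\alpha_{i_1\ldots i_d}|$, where here $\alpha_{i_1\ldots i_d}$ is, up to combinatorial factors, $\frac{(n-d)!}{n!}$ times a sum of $h(X_{\pi(i_1)},\ldots,X_{\pi(i_d)})$ over orderings $\pi$. Using $|h|\le M$ and counting the $n^d$ index tuples, one gets $|\mathfrak{h}^{(d)}\widetilde f|_{\mathrm{HS}}^2\lesssim_d n^d\cdot\big(\tfrac{(n-d)!}{n!}\big)^2 M^2 \cdot d!^2 \lesssim_{d,M} n^{-d}$, uniformly in $X$ and in $\varepsilon$. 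Rescaling $\widetilde f$ by a factor $\asymp n^{d/2}$ turns this bound into \eqref{Cond2} with constant $1$, which is exactly the source of the factor $n$ inside the exponential in the conclusion ($|f|^{2/d}$ gets multiplied by $(n^{d/2})^{2/d}=n$).

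Assembling the pieces: Theorem~\ref{allgem} gives, for a suitable $c_0=c_0(d,M)>0$, the bound $\mathbb{E}_\varepsilon\exp(c_0 n\,|\widetilde f|^{2/d})\le 2$ for every fixed $X$, hence also after taking $\mathbb{E}_X$; then the de la Pe\~{n}a--Gin\'e comparison (absorbing $C_d^{2/d}$ into the constant) yields $\mathbb{E}\exp(c\,n\,|f|^{2/d})\le 2$. The main obstacle is the bookkeeping in the decoupling step: one must make sure the randomization inequality is available in the two-sided form with a dimension-free (in $n$) constant and in a form compatible with the Orlicz-type functional $\exp(c\,|\cdot|^{2/d})$ — this is where conditioning first on $X$ and using convexity of $t\mapsto\exp(c't^{2/d})$ for $d\le$ the relevant range, or a standard truncation/monotonicity argument, is needed. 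A secondary nuisance is tracking the exact combinatorial normalization so that the diagonal terms $i_r=i_s$ (excluded in the sum $\sum_{i_1\ne\ldots\ne i_d}$) do not reintroduce lower-order Hoeffding components; complete degeneracy of $h$ is precisely what guarantees $\widetilde f$ stays purely of degree $d$ and keeps \eqref{Cond1} vacuous.
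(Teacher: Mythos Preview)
Your strategy---randomize with independent Rademachers, observe that conditionally on $X$ the randomized statistic $\widetilde f$ is a pure degree-$d$ Walsh polynomial, bound $|\mathfrak{h}^{(d)}\widetilde f|_{\mathrm{HS}}\lesssim_{d,M} n^{-d/2}$, and then invoke de~la~Pe\~na--Gin\'e---is exactly the paper's route. The one substantive difference is the \emph{order} in which you combine the ingredients, and this is also where your acknowledged ``main obstacle'' bites.

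You want to apply Theorem~\ref{allgem} as a black box to get the exponential bound $\mathbb{E}_\varepsilon\exp(c_0 n|\widetilde f|^{2/d})\le 2$ first, and only afterwards transfer to $f$ via decoupling at the Orlicz level, i.e.\ with $\Phi(t)=\exp(c\,t^{2/d})$. But for $d\ge 3$ this $\Phi$ is \emph{not} convex near the origin, so the standard convex-function form of the de~la~Pe\~na--Gin\'e comparison does not apply, and your suggested fix (``convexity for $d\le$ the relevant range'') fails precisely in the cases of interest. The paper sidesteps this by staying at the moment level throughout: from the proof of Theorem~\ref{allgem} (specifically the chain \eqref{byit}--\eqref{Momenteiteriertallgem} together with the monotonicity \eqref{normen}) one extracts, conditionally on $X$,
\[
\mathbb{E}_\varepsilon\,|\widetilde f|^p \;\le\; (52\,p)^{pd/2}\,\mathbb{E}_\varepsilon\,|\mathfrak{h}^{\varepsilon(d)}\widetilde f|_{\mathrm{HS}}^p,
\]
then takes $\mathbb{E}_X$ and applies de~la~Pe\~na--Gin\'e in its $L^p$ form $\mathbb{E}|f|^p\le \tilde c(d)^p\,\mathbb{E}|\widetilde f|^p$ (Theorem~3.5.3 in \cite{D-G}), which is legitimate for every $p$. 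This yields $\lVert f\rVert_p\le \tilde c(d)\,(52p)^{d/2}\,C_d$ with $C_d\lesssim_{d,M} n^{-d/2}$, and the exponential bound follows from the elementary implication $\lVert g\rVert_k\le\gamma k\Rightarrow \mathbb{E}\,e^{c|g|}\le 2$ as in \eqref{subexp}. In short: your argument is correct once you reroute the decoupling step through $L^p$ moments rather than the Orlicz functional; this is not a cosmetic change but the actual resolution of the obstacle you flagged.
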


\vskip2mm
By Chebychev's inequality, Theorem \ref{allgem} immediately
yields a deviation bound
$$
\mathbb{P}\{|f(X)| \ge t\} \, \le \, 2 e^{-ct^{2/d}}, \quad t \ge 0.
$$
More precisely, we get refined tail estimates similar to 
Adamczak \cite{A}, Theorem 7, or Adamczak and Wolff \cite{A-W}, 
Theorem 3.3.

\vskip5mm
\begin{korollar}
\label{KorrTails}
Let $f = f(X)$ be in $L^\infty(\mathbb{P})$ with $\mathbb{E} f = 0$. For all
$t \geq 0$, putting
$$
\eta_f(t) \, = \, 
\min \Big(\frac{t^{2/d}}{\lVert \mathfrak{h}^{(d)} 
f \rVert_{\mathrm{HS}, \infty}^{2/d}}, \min_{k=1, \ldots, d-1}
\frac{t^{2/k}}{\lVert\mathfrak{h}^{(k)}f \rVert_{\mathrm{HS},2}^{2/k}} \Big),
$$
we have
$$
\mathbb{P}\{|f| \ge t\} \, \le \, e^2\, \exp\{-\eta_f(t)/ 41 (de)^2\}.
$$
\end{korollar}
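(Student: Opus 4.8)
The plan is to derive the tail bound by applying Theorem~\ref{allgem} not directly to $f$, but to a suitably rescaled copy of it, and then optimizing the rescaling. Concretely, set $A_k = \lVert \mathfrak{h}^{(k)} f\rVert_{\mathrm{HS},2}$ for $k = 1,\dots,d-1$ and $A_d = \lVert \mathfrak{h}^{(d)} f\rVert_{\mathrm{HS},\infty}$, and note that the difference operators $\mathfrak{h}^{(k)}$ are homogeneous of degree one in $f$, so that $\mathfrak{h}^{(k)}(\lambda f) = \lambda\,\mathfrak{h}^{(k)} f$ for $\lambda > 0$ and hence $\lVert \mathfrak{h}^{(k)}(\lambda f)\rVert = \lambda A_k$. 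The idea is to pick $\lambda > 0$ as large as possible subject to the constraints $\lambda A_k \le 1$ for all $k$, i.e.\ $\lambda = 1/\max_k A_k$ — but a cleaner route, matching the statement's minimum structure, is to scale each order separately after observing that the real content of Theorem~\ref{allgem}'s proof is that each of the $d$ conditions individually contributes one term to a bound on $\mathbb{E}\exp(c|f|^{2/k})$-type quantities. So first I would restate Theorem~\ref{allgem} in its scale-invariant form: for $g = g(X)$ with $\mathbb{E} g = 0$,
\[
\mathbb{P}\{|g| \ge t\} \;\le\; 2\exp\!\Big(-c\,\min\Big(\tfrac{t^{2/d}}{A_d(g)^{2/d}},\ \min_{1\le k\le d-1}\tfrac{t^{2/k}}{A_k(g)^{2/k}}\Big)\Big),
\]
which follows from Theorem~\ref{allgem} applied to $g/\lambda$ with $\lambda = \max\big(A_d, \max_{k<d} A_k\big)$ once one checks that $t^{2/k}/A_k^{2/k}$ is, for each fixed $k$, exactly the exponent produced by the $k$-th normalized condition.

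Second, I would recover the precise constant $1/41(de)^2$ and the prefactor $e^2$. Here the point is that Theorem~\ref{allgem} with $c = 1/(208e)$ gives $\mathbb{E}\exp(c|f|^{2/d}) \le 2$ under \eqref{Cond1}--\eqref{Cond2}; by Chebyshev, $\mathbb{P}\{|f|\ge t\} \le 2e^{-c t^{2/d}}$. To sharpen the prefactor from $2$ to $e^2$ and to get the stated denominator, I would look more carefully at the moment bound underlying Theorem~\ref{allgem}: typically such a statement comes from an estimate of the form $\lVert f\rVert_p \le (C p)^{d/2}\prod(\cdots)$ or $\mathbb{E}|f|^{2m/d} \le (\text{const}\cdot m)^{m}$, and then one uses the standard fact that $\mathbb{E}|f|^{2m/d}\le (\beta m)^m$ for all integers $m$ implies $\mathbb{P}\{|f|\ge t\}\le e^2\exp(-t^{2/d}/(e\beta))$ (choose $m = \lfloor t^{2/d}/(e\beta)\rfloor$ in Markov's inequality $\mathbb{P}\{|f|\ge t\}\le t^{-2m/d}\mathbb{E}|f|^{2m/d}$ and bound the loss from rounding by $e^2$). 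Tracking $\beta$ through the proof of Theorem~\ref{allgem} — where the factor $(de)^2$ arises from $d$ nested applications of the Sobolev-type inequality, each contributing a factor of order $e$ squared — yields $\beta = 41(de)^2$ in the $d$-th order term, and the analogous computation at level $k$ gives $\beta_k$ comparable to $41(ke)^2 \le 41(de)^2$, so all $d$ terms can be bounded below by the single denominator $41(de)^2$.

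Third, I would assemble the final bound: apply the level-$k$ moment estimate for each $k = 1,\dots,d$, each giving $\mathbb{P}\{|f|\ge t\}\le e^2\exp(-t^{2/k}/(41(de)^2 A_k^{2/k}))$, and take the minimum over $k$ of the right-hand sides, which is exactly $e^2\exp(-\eta_f(t)/41(de)^2)$ since the minimum of the exponentials corresponds to the maximum of the exponents' negatives, i.e.\ the minimum over $k$ of $t^{2/k}/A_k^{2/k}$, which is $\eta_f(t)$ by definition.

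I expect the main obstacle to be the bookkeeping in the second step: Theorem~\ref{allgem} is stated only as an integrated Orlicz bound with a specific constant for the \emph{top} order $d$, whereas Corollary~\ref{KorrTails} needs a per-order tail contribution with a \emph{uniform} denominator $41(de)^2$. Extracting from the (not-yet-seen) proof of Theorem~\ref{allgem} the intermediate polynomial-moment inequalities at each order $k < d$, and verifying that the order-$k$ constant is dominated by $41(de)^2$, is the delicate part; if the proof of Theorem~\ref{allgem} is organized so that the conditions \eqref{Cond1} at orders $k<d$ enter only through a single $L^2$ bound feeding one Sobolev step, this should go through cleanly, but the precise constant-chasing is where care is required. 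The homogeneity rescaling in the first step and the final minimization in the third step are routine.
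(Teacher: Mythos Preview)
Your plan has a genuine gap in the third step. You want to ``apply the level-$k$ moment estimate for each $k=1,\dots,d$, each giving $\mathbb{P}\{|f|\ge t\}\le e^2\exp(-t^{2/k}/(41(de)^2 A_k^{2/k}))$''. But no such per-level bound exists: for $k<d$ the quantity $A_k=\lVert\mathfrak{h}^{(k)}f\rVert_{\mathrm{HS},2}$ is only an $L^2$-norm, and an $L^2$ control on $|\mathfrak{h}^{(k)}f|$ alone cannot produce a sub-Weibull tail of order $2/k$. (Concretely, small $A_1$ just means small variance; it does not force sub-Gaussian tails.) Your first-step rescaling $\lambda=\max_k A_k$ does give a valid bound, but with exponent $c\,t^{2/d}/\lambda^{2/d}$, which is not the claimed $\eta_f(t)$, so that route does not recover the minimum structure either. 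The difficulty you flag as ``bookkeeping'' is therefore structural, not cosmetic.

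The paper's argument avoids this by never separating the levels. It uses the single iterated moment inequality established in the proof of Theorem~\ref{allgem},
\[
\lVert f\rVert_p \ \le\ \sum_{k=1}^{d-1}(41p)^{k/2}\,A_k \;+\;(41p)^{d/2}\,A_d,
\]
and then chooses \emph{one} value $p=\eta_f(t)/41$. By the very definition of $\eta_f(t)$, each of the $d$ summands is then at most $t$, so $e\lVert f\rVert_p\le (de)t$, and Chebyshev in the form $\mathbb{P}(|f|\ge e\lVert f\rVert_p)\le e^{-p}$ gives $\mathbb{P}(|f|\ge (de)t)\le \exp(-\eta_f(t)/41)$; the factor $e^2$ absorbs the case $p<2$, and the replacement $t\mapsto t/(de)$ together with $\eta_f(t/(de))\ge \eta_f(t)/(de)^2$ yields the stated constant. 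So the factor $(de)^2$ does not come from ``$d$ nested Sobolev steps'' as you guessed: the $d$ is the number of summands, the $e$ is from the Chebyshev step, and the square is from the homogeneity of $\eta_f$ under the final rescaling.
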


\vskip2mm
Moreover, it is possible to give a version of Theorem \ref{allgem} for suprema of
suitable classes of functions. To this end, we need some more notation. Let
$\mathfrak{F}$ be a class of functions $f = f(X)$ in
$L^\infty(\mathbb{P})$, where as before $X = (X_1, \ldots, X_n)$ is a vector of independent
random variables. Then, for $i_1 \ne \ldots \ne i_d$, $d = 1, \ldots, n$, we define
$$
\mathfrak{h}^*_{i_1 \ldots i_d}(\mathfrak{F})  \, =  \, \sup_{f \in \mathfrak{F}}\,
\mathfrak{h}_{i_1 \ldots i_d} f(X)
$$
as a structural supremum (eventually taken over a countable subset of $\mathfrak{F}$), 
and put
$$
\big(\mathfrak{h}^{*(d)}(\mathfrak{F})\big)_{i_1 \ldots i_d} = 
\begin{cases} 
\mathfrak{h}^*_{i_1 \ldots i_d}(\mathfrak{F}), & 
\text{if $i_1, \ldots, i_d$ are distinct}, \\ 0, & \text{else}. 
\end{cases}
$$
The notations used in \eqref{LpNorm1} are similarly adapted. This leads to the following
result.

\vskip5mm
\begin{satz}\label{allgemsup}
If $\lVert \mathfrak{h}^{*(k)}(\mathfrak{F}) \rVert_{\mathrm{HS}, 2} \le 1$ for all 
$k = 1, \ldots, d-1$ and
$\lVert\mathfrak{h}^{*(d)}(\mathfrak{F})\rVert_{\mathrm{HS},\infty} \le 1$,
then
$$
\mathbb{E} \exp\Big\{c\, \big|\sup_{f \in \mathfrak{F}}|f| - \mathbb{E}
\sup_{f \in \mathfrak{F}}|f|\big|^{2/d}\Big\} \le 2
$$
with some universal constant $c > 0$.
\end{satz}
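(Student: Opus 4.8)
The plan is to deduce the supremum version from Theorem \ref{allgem} by a truncation-and-approximation argument, applied to the function
$$
F(X) \, = \, \sup_{f \in \mathfrak{F}} |f(X)|,
$$
where the supremum may be taken over a fixed countable dense subset so that $F$ is measurable and $F \in L^\infty(\mathbb{P})$. First I would record the basic monotonicity of the difference operators: since $\mathfrak{h}_i$ in \eqref{h} is built from an $L^\infty$-difference, one has the pointwise bound $\mathfrak{h}_i F \le \mathfrak{h}^*_i(\mathfrak{F})$, and more generally, iterating $(Id - T_{i_s})$ in \eqref{hd} and using that each factor is a difference of values, $|\mathfrak{h}_{i_1 \ldots i_d} F| \le \mathfrak{h}^*_{i_1 \ldots i_d}(\mathfrak{F})$ for distinct indices. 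Consequently $|\mathfrak{h}^{(k)}F|_{\mathrm{HS}} \le |\mathfrak{h}^{*(k)}(\mathfrak{F})|_{\mathrm{HS}}$ pointwise for every $k$, hence $\lVert \mathfrak{h}^{(k)}F \rVert_{\mathrm{HS},2} \le \lVert \mathfrak{h}^{*(k)}(\mathfrak{F}) \rVert_{\mathrm{HS},2} \le 1$ for $k = 1, \ldots, d-1$ and $\lVert \mathfrak{h}^{(d)}F \rVert_{\mathrm{HS},\infty} \le \lVert \mathfrak{h}^{*(d)}(\mathfrak{F}) \rVert_{\mathrm{HS},\infty} \le 1$.

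Next I would apply Theorem \ref{allgem} to the centered function $g = F - \mathbb{E}F$. The conditions \eqref{Cond1}, \eqref{Cond2} are invariant under subtracting a constant (the difference operators annihilate constants), so the hypotheses just verified give $\mathbb{E}\exp(c\,|F - \mathbb{E}F|^{2/d}) \le 2$ with the same universal constant $c$, which is exactly the claimed bound. The only subtlety to address is the passage to a countable subfamily: one argues that if $\mathfrak{F}_0 \subset \mathfrak{F}$ is countable with $\sup_{f \in \mathfrak{F}_0}|f| = \sup_{f \in \mathfrak{F}}|f|$ almost surely and the same identity holds after each substitution $X_i \mapsto \bar{X}_i$ (which can be arranged by taking a single countable set that is "stable" along all coordinates, e.g. by a standard monotone-class / separability argument), then the structural suprema computed over $\mathfrak{F}_0$ coincide with those over $\mathfrak{F}$ a.s., so working with $\mathfrak{F}_0$ is harmless.

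The main obstacle I anticipate is precisely this measurability/countability bookkeeping: one must ensure that $F$ and all the finitely many composed translates $T_{i_{s_1} \ldots i_{s_k}}F$ entering \eqref{hd} are simultaneously represented by the same countable subfamily, so that the pointwise inequality $|\mathfrak{h}_{i_1 \ldots i_d}F| \le \mathfrak{h}^*_{i_1 \ldots i_d}(\mathfrak{F})$ holds almost everywhere on the enlarged probability space carrying $(X, \bar X)$. This is a routine separability argument and is the reason for the parenthetical remark "(eventually taken over a countable subset of $\mathfrak{F}$)" in the statement; once it is in place, the rest is just the monotonicity of $\mathfrak{h}^{(k)}$ under suprema plus a direct invocation of Theorem \ref{allgem}, with no new constants lost.
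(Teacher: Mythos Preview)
Your reduction to Theorem \ref{allgem} rests on the pointwise inequality
\[
\mathfrak{h}_{i_1 \ldots i_d}F \ \le \ \mathfrak{h}^*_{i_1 \ldots i_d}(\mathfrak{F})
\qquad (d \ge 2),
\]
and this step is false. The first-order bound $\mathfrak{h}_i F \le \mathfrak{h}^*_i(\mathfrak{F})$ is indeed just the reverse triangle inequality for suprema, but for $d \ge 2$ the alternating sum $\prod_s (Id - T_{i_s})F$ involves four (or more) separate suprema, and there is no reason the second-order difference of a supremum should be controlled by the supremum of second-order differences. A concrete counterexample: take $n=2$, $X_1,X_2,\bar X_1,\bar X_2 \in \{0,1\}$, and $\mathfrak{F} = \{f_1,f_2\}$ with $f_1(x) = x_1$, $f_2(x) = x_2$. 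Each $f_k$ depends on a single coordinate, so $(Id - T_1)(Id - T_2)f_k \equiv 0$ and hence $\mathfrak{h}^*_{12}(\mathfrak{F}) = 0$. But $F(x) = \max(x_1,x_2)$, and at $X_1=X_2=1$, $\bar X_1=\bar X_2=0$ one computes
\[
F - T_1F - T_2F + T_{12}F \ = \ 1 - 1 - 1 + 0 \ = \ -1,
\]
so $\mathfrak{h}_{12}F = 1/4 > 0$. Thus $|\mathfrak{h}^{(2)}F|_{\mathrm{HS}}$ is \emph{not} dominated by $|\mathfrak{h}^{*(2)}(\mathfrak{F})|_{\mathrm{HS}}$, and you cannot feed $F$ into Theorem \ref{allgem} using only the hypotheses on $\mathfrak{h}^{*(k)}(\mathfrak{F})$.

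The paper avoids this trap by never comparing $\mathfrak{h}^{(k)}F$ to $\mathfrak{h}^{*(k)}(\mathfrak{F})$ for $k \ge 2$. It uses the first-order bound $|\mathfrak{h}F| \le |\mathfrak{h}^*(\mathfrak{F})|$ once, and then runs the moment recursion of Corollary \ref{momentineq} directly on the scalar quantities $|\mathfrak{h}^{*(k)}(\mathfrak{F})|_{\mathrm{HS}}$, proving the analogue of Lemma \ref{itGradHessindep} in the form
\[
\big|\mathfrak{h}\,|\mathfrak{h}^{*(d-1)}(\mathfrak{F})|_{\mathrm{HS}}\big| \ \le \ |\mathfrak{h}^{*(d)}(\mathfrak{F})|_{\mathrm{HS}}.
\]
This works because $\sup_{f}\lVert \cdot \rVert_{j,\infty}$ is still a pseudo-norm, so the triangle-inequality arguments in the proof of Lemma \ref{itGradHessindep} go through. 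In other words, the iteration stays inside the $\mathfrak{h}^*$ family rather than returning to differences of $F$.
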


\vskip2mm
Finally, to provide another application, recall the example of additive functionals
of partial sums (e.\,g. random walks)
\begin{equation}
\label{partsum}
S_f = S_f(X) = \sum_{i=1}^{n} f\Big(\sum_{j=1}^{i} X_j\Big).
\end{equation}
In \cite{G-S} we proved a second order concentration result for functionals of this
type, which may be reproved and somewhat sharpened by applying Corollary
\ref{KorrTails}:

\vskip5mm
\begin{proposition}
\label{proppartsum}
Given a bounded, Borel measurable function $f \colon \mathbb{R} \to \mathbb{R}$, 
for any $t \ge 0$,
$$
\mathbb{P}(|S_f - \mathbb{E}S_f| \ge t) \, \le \, 
e^2\, \exp \Big\{-c \min\Big(\frac{t^2}{n^3\lVert f \rVert_\infty^2},
\frac{t}{n^2 \lVert f \rVert_\infty}\Big)\Big\},
$$
where $c > 0$ is some numerical constant.
\end{proposition}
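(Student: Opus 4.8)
The plan is to apply Corollary~\ref{KorrTails} with $d = 2$ to the centered functional $g = S_f - \mathbb{E} S_f$. Since $f$ is bounded we have $|S_f| \le n \lVert f \rVert_\infty$, so $g \in L^\infty(\mathbb{P})$ with $\mathbb{E} g = 0$, and because the difference operators in \eqref{h} and \eqref{hd} annihilate additive constants, $\mathfrak{h}^{(1)} g = \mathfrak{h}^{(1)} S_f$ and $\mathfrak{h}^{(2)} g = \mathfrak{h}^{(2)} S_f$. Hence it suffices to bound $\lVert \mathfrak{h}^{(1)} S_f \rVert_{\mathrm{HS},2}$ and $\lVert \mathfrak{h}^{(2)} S_f \rVert_{\mathrm{HS},\infty}$, insert these into the quantity $\eta_g$ of Corollary~\ref{KorrTails}, and simplify.

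\textbf{First-order differences.} Write $S_i = \sum_{j=1}^i X_j$. Replacing $X_k$ by its copy $\bar X_k$ leaves $S_i$ unchanged for $i < k$, so
$$
S_f - T_k S_f \, = \, \sum_{i=k}^n \big( f(S_i) - f(S_i - X_k + \bar X_k) \big),
$$
a sum of $n - k + 1$ terms each bounded in absolute value by $2 \lVert f \rVert_\infty$. Thus $\mathfrak{h}_k S_f \le (n - k + 1) \lVert f \rVert_\infty$ uniformly, and consequently
$$
\lVert \mathfrak{h}^{(1)} S_f \rVert_{\mathrm{HS},2}^2 \, = \, \mathbb{E}\, |\mathfrak{h}^{(1)} S_f|_{\mathrm{HS}}^2 \, \le \, \lVert f \rVert_\infty^2 \sum_{k=1}^n (n-k+1)^2 \, \le \, n^3 \lVert f \rVert_\infty^2 .
$$

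\textbf{Second-order differences.} Applying the same computation a second time and noting that $T_\ell$ acts trivially on a summand $f(S_i) - f(S_i - X_k + \bar X_k)$ when $i < \ell$, we see that for $k < \ell$ the mixed second difference $S_f - T_k S_f - T_\ell S_f + T_{k\ell} S_f$ is a sum over $i = \ell, \ldots, n$ of alternating second differences of $f$, each bounded by $4 \lVert f \rVert_\infty$. Hence $\mathfrak{h}_{k\ell} S_f \le (n - \ell + 1) \lVert f \rVert_\infty$ for $k < \ell$ (and, by symmetry, $\mathfrak{h}_{k\ell} S_f \le (n - \max(k,\ell) + 1) \lVert f \rVert_\infty$ in general), so that uniformly
$$
|\mathfrak{h}^{(2)} S_f|_{\mathrm{HS}}^2 \, = \, 2 \sum_{k < \ell} \mathfrak{h}_{k\ell}(S_f)^2 \, \le \, 2 \lVert f \rVert_\infty^2 \sum_{\ell = 2}^{n} (\ell - 1)(n - \ell + 1)^2 \, \le \, 2\, n^4 \lVert f \rVert_\infty^2 ,
$$
whence $\lVert \mathfrak{h}^{(2)} S_f \rVert_{\mathrm{HS},\infty} \le \sqrt{2}\, n^2 \lVert f \rVert_\infty$.

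\textbf{Conclusion.} Feeding the bounds $\lVert \mathfrak{h}^{(1)} S_f \rVert_{\mathrm{HS},2} \le n^{3/2} \lVert f \rVert_\infty$ and $\lVert \mathfrak{h}^{(2)} S_f \rVert_{\mathrm{HS},\infty} \le \sqrt{2}\, n^2 \lVert f \rVert_\infty$ into Corollary~\ref{KorrTails} with $d = 2$ (so that $41 (de)^2 = 164\, e^2$) yields
$$
\eta_g(t) \, \ge \, \min\Big( \frac{t}{\sqrt{2}\, n^2 \lVert f \rVert_\infty}, \; \frac{t^2}{n^3 \lVert f \rVert_\infty^2} \Big),
$$
and hence the asserted inequality with, for instance, $c = 1/(328\, e^2)$. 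The one genuine difficulty is the bookkeeping for the higher difference operators: the key observation is that the mixed second difference $S_f - T_k S_f - T_\ell S_f + T_{k\ell} S_f$ only involves the indices $i \ge \max(k,\ell)$, which is precisely what keeps $\lVert \mathfrak{h}^{(2)} S_f \rVert_{\mathrm{HS},\infty}$ of order $n^2 \lVert f \rVert_\infty$ rather than larger; once the structure of the differences has been identified, what remains is elementary summation.
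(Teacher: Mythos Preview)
Your proof is correct and follows essentially the same route as the paper: bound $\mathfrak{h}_k S_f$ by $(n-k+1)\lVert f\rVert_\infty$, bound $\mathfrak{h}_{k\ell} S_f$ by $(n-\max(k,\ell)+1)\lVert f\rVert_\infty$ via the observation that the mixed second difference only involves indices $i\ge \max(k,\ell)$, and then plug the resulting $n^{3/2}\lVert f\rVert_\infty$ and $n^{2}\lVert f\rVert_\infty$ bounds into Corollary~\ref{KorrTails} with $d=2$. Your write-up is in fact a little more explicit than the paper's (you justify the centering step and track the constants), but there is no substantive difference in the argument.
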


\vskip5mm
\subsection{Higher order Efron--Stein inequality}

Given independent random variables $X_1, \ldots, X_n$, 
we denote by $\mathbb{E}_i\, f(X) = \mathbb{E}_i f$ and
$\text{Var}_i f(X) = \mathbb{E}_i\, (f(X) - \mathbb{E}_i f(X))^2$
the expected value and variance with respect to $X_i$. By a well-known 
result of Efron and Stein \cite{E-S}, the variance functional is subadditive
in the sense that
\begin{equation}\label{Efron-Stein}
\mathrm{Var} f(X) \le \mathbb{E} \sum_{i=1}^{n} \text{Var}_i\,f(X).
\end{equation}
It is possible to restate \eqref{Efron-Stein} in terms of difference operators 
which we introduce below. As before, let $\bar{X}_1,\ldots, \bar{X}_n$ 
be a set of independent copies of $X_1, \ldots, X_n$ and 
$T_i f = f(X_1, \ldots, X_{i-1}, \bar{X}_i,\linebreak[2] X_{i+1},\ldots, X_n)$.
We use $\bar{\mathbb{E}}_i$ to denote the expectation with respect to $\bar{X}_i$,
and put $x_+ = \max(x,0)$ and $x_- = \max(-x,0)$ for a number~$x$.

\vskip5mm
\begin{definition}
\label{DiffOp}
Let $f(X) = f(X_1, \ldots, X_n)$ be a measurable function. For
$i = 1, \ldots, n$, under proper integrability assumptions, put:

\begin{enumerate}[(i)]
\item
$$
\mathfrak{v}_if(X) = 
\big(\mathrm{Var}_i\, f(X)\big)^{1/2}, \qquad \qquad
\mathfrak{v} f = (\mathfrak{v}_1f, \ldots, \mathfrak{v}_nf);
$$
\item
$$
\mathfrak{D}_i f(X) = f(X) - \mathbb{E}_i f(X), \qquad \qquad
\mathfrak{D}f = (\mathfrak{D}_1f, \ldots, \mathfrak{D}_nf);
$$
\item
$$
\mathfrak{d}_i f(X) = 
\Big(\frac{1}{2}\,\bar{\mathbb{E}}_i(f(X) - T_if(X))^2\Big)^{1/2}, \qquad
\mathfrak{d} f = (\mathfrak{d}_1f, \ldots, \mathfrak{d}_n f);
$$
\item
$$
\mathfrak{d}^+_i f(X) = 
\Big(\frac{1}{2}\,\bar{\mathbb{E}}_i(f(X) - T_if(X))_+^2\Big)^{1/2}, \qquad
\mathfrak{d}^+f = (\mathfrak{d}^+_1f, \ldots, \mathfrak{d}^+_nf);
$$
\item
$$
\mathfrak{d}^-_if(X) = 
\Big(\frac{1}{2}\,\bar{\mathbb{E}}_i(f(X) - T_if(X))_-^2\Big)^{1/2}, \qquad
\mathfrak{d}^-f = (\mathfrak{d}^-_1f, \ldots, \mathfrak{d}^-_nf).
$$
\end{enumerate}
\end{definition}

\vskip2mm
Various relations between these difference operators are
discussed in Section 3. In particular, it is easy to see that,
for $f(X) \in L^2(\mathbb{P})$,
\begin{equation}
\label{L2normengleich}
\mathbb{E}\,|\mathfrak{v}f|^2 = \mathbb{E}\,|\mathfrak{D}f|^2 = 
\mathbb{E}\,|\mathfrak{d}f|^2 = 2\, \mathbb{E}\,|\mathfrak{d}^+f|^2 = 
2\, \mathbb{E}\,|\mathfrak{d}^-f|^2,
\end{equation}
where $| \cdot |$ denotes the Euclidean norm in $\mathbb{R}^n$.
Therefore, we may equivalently state the Efron--Stein inequality as
\begin{gather}
\label{Efron-SteinwDiffOp}
\begin{split}
\text{Var}f \le \mathbb{E}\,|\mathfrak{v}f|^2,\qquad 
\text{Var}f \le \mathbb{E}\,|\mathfrak{D}f|^2,\qquad 
\text{Var}f \le \mathbb{E}\,|\mathfrak{d}f|^2,\\
\text{Var}f \le 2\, \mathbb{E}\,|\mathfrak{d}^+f|^2\qquad \text{or}\qquad 
\text{Var}f \le 2\, \mathbb{E}\,|\mathfrak{d}^-f|^2.
\end{split}
\end{gather}
Equality in \eqref{Efron-SteinwDiffOp} holds 
iff the Hoeffding decomposition of $f$ consists of the expected value and the first
order term only, namely for $f(X) = \mathbb{E} f(X) + \sum_{i=1}^{n} h_i(X_i)$. 
Thus, the Efron--Stein inequality
may be restated as the fact that any product probability measure 
satisfies a Poincar\'e-type inequality with respect to any of the difference
operators $\mathfrak{v}$, $\mathfrak{D}$ and $\mathfrak{d}$ with constant 
$\sigma^2 = 1$ (like \eqref{PI1} below). The same statement applies 
as well to the difference operators $\mathfrak{d}^+$ and $\mathfrak{d}^-$ 
but with constant $\sigma^2 = 2$.

To introduce higher order versions of the Efron--Stein inequality, 
we need to define higher order differences based on the difference operators 
from Definition \ref{DiffOp}. For $\mathfrak{D}$, this is achieved 
by iteration, i.\,e. $\mathfrak{D}_{ij} f = \mathfrak{D}_i(\mathfrak{D}_j f)$ 
or, in general, 
$\mathfrak{D}_{i_1 \ldots i_d} f = \mathfrak{D}_{i_1}(\ldots (\mathfrak{D}_{i_d}f))$ 
for $1 \le i_1, \ldots, i_d \le n$.
To generalize $\mathfrak{v}$, we set similarly to \eqref{hd}
\begin{align}
\label{vd}
\begin{split}
\mathfrak{v}_{i_1 \ldots i_d}f(X) 
 = \; &
\Big(\mathbb{E}_{i_1 \ldots i_d}\,
\Big(\prod_{s=1}^d\, (Id - \mathbb{E}_{i_s})\, f(X)\Big)^2\,\Big)^{1/2}\\
  = \; &
\Big(\mathbb{E}_{i_1 \ldots i_d}\,
\Big(f(X) + \sum_{k=1}^d\, (-1)^k 
\sum_{1 \leq s_1 < \ldots < s_k \leq d} 
\mathbb{E}_{i_{s_1} \ldots i_{s_k}}f(X)\Big)^2\,\Big)^{1/2},
\end{split}
\end{align}
Here, $\mathbb{E}_{i_1 \ldots i_d}$ means taking the expectation with respect 
to $X_{i_1}, \ldots, X_{i_d}$. For instance,
$$
\mathfrak{v}_{ij} f = \Big(\mathbb{E}_{ij}\, 
(f - \mathbb{E}_i f - \mathbb{E}_j f + \mathbb{E}_{ij}f)^2\Big)^{1/2}, \qquad
1 \leq i < j \leq n.
$$ 
In particular, $\mathfrak{v}_{ij} f \ne (\text{Var}_{ij}f)^{1/2}$.
One major difference is that $\mathfrak{v}_{ij} f$ annihilates first order 
Hoeffding terms, but $\text{Var}_{ij}f$ does not. Similar remarks hold for any 
$d \ge 2$.

Finally, in case of $\mathfrak{d}$, we define
\begin{align}
\label{dd}
\begin{split}
\mathfrak{d}_{i_1 \ldots i_d} f(X) 
 = \; &
\Big(\frac{1}{2^d}\,\bar{\mathbb{E}}_{i_1 \ldots i_d}\,
\Big(\prod_{s=1}^d\, (Id - T_{i_s})\,f(X)\Big)^2\,\Big)^{1/2}\\
  = \; &
\Big(\frac{1}{2^d}\,\bar{\mathbb{E}}_{i_1 \ldots i_d}\,
\Big(f(X) + \sum_{k=1}^d (-1)^k \sum_{1 \leq s_1 < \ldots < s_k \leq d} 
T_{i_{s_1} \ldots i_{s_k}} f(X)\Big)^2\,\Big)^{1/2}.
\end{split}
\end{align}
Here, $\bar{\mathbb{E}}_{i_1 \ldots i_d}$ means taking the expectation with 
respect to $\bar{X}_{i_1}, \ldots, \bar{X}_{i_d}$, recalling that 
$T_{i_1 \ldots i_d} = T_{i_1} \circ \ldots \circ T_{i_d}$. For 
$\mathfrak{d}^{\pm}$, a variant of \eqref{dd} 
holds by setting
\begin{equation*}
\mathfrak{d}_{i_1 \ldots i_d}^\pm f(X) = 
\Big(\frac{1}{2^d}\,\bar{\mathbb{E}}_{i_1 \ldots i_d}\,
\Big(\prod_{s=1}^d\, (Id - T_{i_s})f(X)\Big)_\pm^2\Big)^{1/2}.
\end{equation*}

In the same way as in \eqref{Hessediskret}, we may define $d$-th order 
hyper-matrices with respect to any of the difference operators introduced 
above, e.\,g.
\begin{align*}
(\mathfrak{v}^{(d)}f)_{i_1 \ldots i_d} 
 & = 
\begin{cases} 
\mathfrak{v}_{i_1 \ldots i_d}f, 
 & \text{if $i_1, \ldots, i_d$ are distinct}, \\ 0, 
 & \text{else}. 
\end{cases}
\end{align*}
The hyper-matrices $\mathfrak{D}^{(d)} f$, $\mathfrak{d}^{(d)} f$ and 
$\mathfrak{d}^{\pm (d)} f$ are defined analogously. As in case of 
$\mathfrak{h}^{(d)} f$, we equip these hyper-matrices with the respective 
Hilbert--Schmidt type norms.
We are now ready to formulate the following generalization of 
\eqref{Efron-SteinwDiffOp}.

\vskip5mm
\begin{satz}[Higher Order Efron--Stein Inequality]
\label{HigherOrderEfronStein}
Let $X_1, \ldots, X_n$ be independent random variables, 
and assume that $f(X) \in L^2(\mathbb{P})$ admits a Hoeffding decomposition 
of type
$
f = \mathbb{E}f + \sum_{k=d}^{n} f_k
$
for some $1 \leq d \leq n$. Then
$$
\mathrm{Var} f \le \frac{1}{d!}\, \mathbb{E}\, |\mathfrak{v}^{(d)} f|^2,\qquad 
\mathrm{Var} f \le \frac{1}{d!}\, \mathbb{E}\, |\mathfrak{D}^{(d)} f|^2,\qquad
\mathrm{Var} f \le \frac{1}{d!}\, \mathbb{E}\, |\mathfrak{d}^{(d)} f|^2.
$$
Moreover,
$$
\mathrm{Var} f \le \frac{2}{d!}\, \mathbb{E}\, |\mathfrak{d}^{+(d)} f|^2 \qquad
\text{and}\qquad 
\mathrm{Var} f \le \frac{2}{d!}\, \mathbb{E}\, |\mathfrak{d}^{-(d)} f|^2.
$$
Equality holds if and only if the Hoeffding decomposition of $f$ consists of 
the expected value and the $d$-th order term only, i.e. $f = \mathbb{E}f + f_d$.
\end{satz}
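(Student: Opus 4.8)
The plan is to reduce all five inequalities to a single entrywise identity read off the Hoeffding decomposition of $f$. Write $f = \mathbb{E}f + \sum_{k=d}^n f_k$ with $f_k = \sum_{|I| = k} h_I(X_I)$, so that by hypothesis there are no Hoeffding terms of degree $1, \ldots, d-1$; recall that the $h_I$ are pairwise orthogonal in $L^2(\mathbb{P})$ and satisfy $\mathbb{E}_j h_I = 0$ for every $j \in I$, and consequently $\mathrm{Var}\, f = \sum_{k=d}^n \mathbb{E}\, f_k^2$.

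First I would record how the building blocks of the operators act on a single $h_I$. Since each of $Id - \mathbb{E}_{i_s}$ and $Id - T_{i_s}$ annihilates $h_I$ when $i_s \notin I$ and fixes (resp.\ transforms) it otherwise, for distinct $i_1, \ldots, i_d$, writing $J = \{i_1, \ldots, i_d\}$, one gets
\[
\prod_{s=1}^d (Id - \mathbb{E}_{i_s}) f = \sum_{I \supseteq J} h_I, \qquad \prod_{s=1}^d (Id - T_{i_s}) f = \sum_{I \supseteq J} g_I, \quad g_I := \prod_{s=1}^d (Id - T_{i_s}) h_I .
\]
Since the operators $Id-\mathbb{E}_{i_s}$ commute, $\mathfrak{D}_{i_1 \ldots i_d} f = \prod_s (Id - \mathbb{E}_{i_s}) f$, and then for both $\mathfrak{v}$ and $\mathfrak{D}$ this at once yields $\mathbb{E}\, \mathfrak{v}_{i_1 \ldots i_d}^2 f = \mathbb{E}\, \mathfrak{D}_{i_1 \ldots i_d}^2 f = \mathbb{E}\big(\sum_{I \supseteq J} h_I\big)^2 = \sum_{I \supseteq J} \mathbb{E}\, h_I^2$, using orthogonality. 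The crux is that $\mathbb{E}\, \mathfrak{d}_{i_1 \ldots i_d}^2 f = \sum_{I \supseteq J} \mathbb{E}\, h_I^2$ too. I would first check that the $g_I$ ($I \supseteq J$) are pairwise orthogonal: for $I \neq I'$ pick $\ell \in I \setminus I'$ (then $\ell \notin J$), note that $g_{I'}$ depends on neither $X_\ell$ nor $\bar X_\ell$, while $\mathbb{E}_\ell\, g_I = \prod_s (Id - T_{i_s}) \mathbb{E}_\ell h_I = 0$ because $\mathbb{E}_\ell$ commutes with each $Id - T_{i_s}$ ($\ell \neq i_s$); hence $\mathbb{E}[g_I g_{I'}] = \mathbb{E}[g_{I'}\, \mathbb{E}_\ell g_I] = 0$. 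Then I would expand $g_I = \sum_{A \subseteq \{1,\ldots,d\}} (-1)^{|A|} T_A h_I$, where $T_A$ substitutes $\bar X_{i_s}$ for $X_{i_s}$ exactly for $s \in A$: for $A \neq B$, any $s^* \in A \triangle B$ lets one integrate out a fresh copy of $X_{i_{s^*}}$ against $\mathbb{E}_{i_{s^*}} h_I = 0$, so $\mathbb{E}[(T_A h_I)(T_B h_I)] = 0$ and only the $2^d$ diagonal terms survive, each equal to $\mathbb{E}\, h_I^2$. Thus $\mathbb{E}\, g_I^2 = 2^d\, \mathbb{E}\, h_I^2$ and $\mathbb{E}\, \mathfrak{d}_{i_1 \ldots i_d}^2 f = 2^{-d} \sum_{I \supseteq J} \mathbb{E}\, g_I^2 = \sum_{I \supseteq J} \mathbb{E}\, h_I^2$.

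Next I would sum over indices. The hypermatrix entries are symmetric in $i_1, \ldots, i_d$ and vanish unless these are distinct, so
\[
\mathbb{E}\, |\mathfrak{v}^{(d)} f|^2 = d! \sum_{i_1 < \ldots < i_d} \mathbb{E}\, \mathfrak{v}_{i_1 \ldots i_d}^2 f = d! \sum_{i_1 < \ldots < i_d} \sum_{I \supseteq \{i_1, \ldots, i_d\}} \mathbb{E}\, h_I^2 = d! \sum_{k=d}^n \binom{k}{d} \sum_{|I| = k} \mathbb{E}\, h_I^2 ,
\]
after interchanging the sums and counting the $d$-subsets of an index set of size $k$; the same computation applies verbatim to $\mathfrak{D}$ and $\mathfrak{d}$. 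Since $\binom{k}{d} \geq 1$ for $k \geq d$, with equality precisely when $k = d$, and $\mathrm{Var}\, f = \sum_{k=d}^n \mathbb{E}\, f_k^2$, we obtain $\tfrac{1}{d!}\, \mathbb{E}\, |\mathfrak{v}^{(d)} f|^2 = \sum_{k=d}^n \binom{k}{d} \mathbb{E}\, f_k^2 \geq \mathrm{Var}\, f$, with equality iff $\mathbb{E}\, f_k^2 = 0$ for all $k > d$, i.e.\ $f = \mathbb{E}f + f_d$; likewise for $\mathfrak{D}$ and $\mathfrak{d}$.

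Finally, for $\mathfrak{d}^{\pm}$ I would use a sign symmetry of $G := \prod_{s=1}^d (Id - T_{i_s}) f$. Swapping the independent copies $X_{i_1} \leftrightarrow \bar X_{i_1}$ is measure preserving and, in the expansion $G = \sum_{A} (-1)^{|A|} T_A f$, sends the term indexed by $A$ to the one indexed by $A \triangle \{1\}$; hence $G$ changes sign, so $\mathbb{E}\, (G)_+^2 = \mathbb{E}\, (G)_-^2 = \tfrac12\, \mathbb{E}\, G^2$. Consequently $\mathbb{E}\, |\mathfrak{d}^{\pm(d)} f|^2 = \tfrac12\, \mathbb{E}\, |\mathfrak{d}^{(d)} f|^2$, and the two bounds with constant $2/d!$ follow with the same equality cases. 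I expect the main obstacle to be the $\mathfrak{d}$ identity above — keeping precise track of which of $X_{i_s}$, $\bar X_{i_s}$ each factor $T_A h_I$ depends on, so that the mean-zero property of $h_I$ is applied in the correct variable; everything else is bookkeeping with the Hoeffding orthogonality.
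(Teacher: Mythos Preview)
Your proof is correct. Both your argument and the paper's ultimately rest on the identity
\[
\mathbb{E}\,|\mathfrak{D}^{(d)} f_m|^2 \;=\; (m)_d\,\mathbb{E}\,f_m^2 \;=\; d!\,\binom{m}{d}\,\mathbb{E}\,f_m^2,
\]
together with the equalities $\mathbb{E}\,|\mathfrak{D}^{(d)} f|^2 = \mathbb{E}\,|\mathfrak{v}^{(d)} f|^2 = \mathbb{E}\,|\mathfrak{d}^{(d)} f|^2 = 2\,\mathbb{E}\,|\mathfrak{d}^{\pm(d)} f|^2$. The difference is in how this identity is reached. The paper introduces the Laplacian $\mathfrak{L}_d = \sum_{i_1 \neq \cdots \neq i_d} \mathfrak{D}_{i_1}\cdots\mathfrak{D}_{i_d}$, proves that each Hoeffding term $f_m$ is an eigenfunction with eigenvalue $(m)_d$ (Theorem~\ref{diagonal}), and then uses self-adjointness of the $\mathfrak{D}_i$ to write $\mathbb{E}\,|\mathfrak{D}^{(d)} f_m|^2 = \mathbb{E}\,f_m\,\mathfrak{L}_d f_m$; the inequality is then obtained by iterating the recursion of Proposition~\ref{Gradient-Hesse}. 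You instead compute each entry $\mathbb{E}\,(\mathfrak{D}_{i_1\cdots i_d} f)^2 = \sum_{I \supseteq J}\mathbb{E}\,h_I^2$ directly from the action of $Id - \mathbb{E}_{i_s}$ on a single Hoeffding piece, and then count $d$-subsets of $I$. Your route is more elementary and self-contained for this theorem; the paper's Laplacian framework is slightly heavier but is what also yields the ancillary formulas \eqref{Varianzdarst} and \eqref{Houdre-Formel}. Your explicit verification of the $\mathfrak{d}$ case via the orthogonality of the $g_I$ and of the $T_A h_I$ makes precise what the paper compresses into the one-line relation \eqref{relvDdd}; likewise your sign-swap argument for $\mathfrak{d}^{\pm}$ is exactly the ``symmetry and Fubini'' step the paper invokes without details.
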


\vskip2mm
In particular, Theorem \ref{HigherOrderEfronStein} yields the
following formula for the variance of an arbitrary function $f = f(X) \in
L^2(\mathbb{P})$ with Hoeffding decomposition $f = \sum_{k=0}^{n} f_k$:
\begin{equation}
\label{Varianzdarst}
\mathrm{Var} f = \sum_{k=1}^{n} \frac{1}{k!} \mathbb{E}\, \lvert
\mathfrak{v}^{(k)} f_k\rvert^2 = \sum_{k=1}^{n} \frac{1}{k!} \mathbb{E}\,
\lvert \mathfrak{D}^{(k)} f_k\rvert^2 = \sum_{k=1}^{n} \frac{1}{k!}
\mathbb{E}\, \lvert \mathfrak{d}^{(k)} f_k\rvert^2.
\end{equation}
This result is related to the work of Houdr\'e \cite{Hou}, who studied
iterations of the Efron--Stein inequality for symmetric functions in
the context of the jackknife estimate of the variance. In particular,
he obtained formulas for the variance in terms of certain higher order
difference operators adapted to this situation.
Following the lines of our proofs, it is possible to extend his results
to arbitrary functions of independent random variables. To provide an
example, we may prove that
\begin{equation}
\label{Houdre-Formel}
\mathrm{Var} f = \sum_{k=1}^{n} \frac{(-1)^{k+1}}{k!} \mathbb{E}
\lvert \mathfrak{v}^{(k)} f \rvert^2,
\end{equation}
which is an extension of (1.3) from \cite{Hou}. As always, here the
difference operator $\mathfrak{v}$ can be replaced by $\mathfrak{D}$,
$\mathfrak{d}$ and (up to a factor $2$) $\mathfrak{d}^\pm$.

\vskip5mm
\subsection{Differentiable Functions}

In the following we shall develop higher order concentration in the setting 
of smooth functions on $\mathbb{R}^n$. Here we 
may derive similar results in the spirit of Adamczak and Wolff \cite{A-W}, 
when the underlying probability measure satisfies a logarithmic Sobolev 
inequality. Let us recall that a Borel probability measure $\mu$
on an open set $G \subset \mathbb{R}^n$ is said to satisfy 
a \emph{Poincar\'e-type} and respectively 
a \emph{logarithmic Sobolev inequality} with constant $\sigma^2 > 0$, 
if for any bounded smooth function $f$ on $G$ with gradient $\nabla f$,
respectively
\begin{equation}
\label{PI1}
\text{Var}_\mu (f) \le \sigma^2 \int |\nabla f|^2\, d\mu,
\end{equation}
\begin{equation}
\label{LSI1}
\text{Ent}_\mu (f^2) \le 2 \sigma^2 \int |\nabla f|^2\, d\mu.
\end{equation}
Here, $\text{Var}_\mu (f) = \int f^2\, d\mu - (\int f\, d\mu)^2$ is the
variance, and 
$\text{Ent}_\mu (f^2) = 
\int f^2 \log f^2\, d\mu - \int f^2\, d\mu\ \log\int f^2\, d\mu$
is the entropy functional.
Logarithmic Sobolev inequalities are stronger than Poincar\'e inequalities,
in the sense that \eqref{LSI1} implies \eqref{PI1}.

Given a function $f \in {C}^d(G)$,
we define $f^{(d)}$ to be the (hyper-) matrix whose entries
\begin{equation}
\label{Hesseallgem}
f^{(d)}_{i_1 \ldots i_d}(x) = \partial_{i_1 \ldots i_d} f(x), \qquad
d = 1,2,\dots
\end{equation}
represent the $d$-fold (continuous) partial derivatives of $f$ at $x \in G$. 
By considering $f^{(d)}(x)$ as a symmetric multilinear $d$-form, we define 
operator-type norms by
\begin{equation}
\label{Operatornorm}
|f^{(d)}(x)|_\mathrm{Op} = 
\sup \left\{ f^{(d)}(x)[v_1, \ldots, v_d] \colon |v_1| = \cdots = |v_d| = 1\right\}.
\end{equation}
For instance, $|f^{(1)}(x)|_\mathrm{Op}$ is the Euclidean norm of 
the gradient $\nabla f(x)$, and $|f^{(2)}(x)|_\mathrm{Op}$ is the operator norm 
of the Hessian $f''(x)$. Furthermore, similarly to \eqref{LpNorm1}, we will use 
the short-hand notation
\begin{equation}
\label{Lpnorm2}
\lVert f^{(d)} \rVert_{\mathrm{Op}, p} =
\left(\int_G |f^{(d)}|_\mathrm{Op}^p\, d\mu\right)^{1/p}, \qquad p \in (0, \infty].
\end{equation}

We now have the following results, assuming that $\mu$ is a probability measure 
on $G$ satisfying a logarithmic Sobolev inequality with constant $\sigma^2 > 0$.

\vskip5mm
\begin{satz}
\label{kontinuierlich}
Let $f \colon G \to \mathbb{R}$ be a $\mathcal{C}^d$-smooth function with 
$\int_G f\, d\mu = 0$. If
\begin{equation}
\label{Bed1}
\lVert f^{(k)} \rVert_{\mathrm{Op},2} \le 
\min(1, \sigma^{d-k})\qquad \forall k = 1, \ldots, d-1
\end{equation}
and
\begin{equation}
\label{Bed2}
\lVert f^{(d)} \rVert_{\mathrm{Op}, \infty} \le 1,
\end{equation}
then with some universal constant $c > 0$ we have
$$
\int_G \exp\Big\{\frac{c}{\sigma^2}\, |f|^{2/d}\Big\} d\mu \, \le \, 2.
$$
\end{satz}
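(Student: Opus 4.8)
The plan is to reduce the asserted exponential integrability to the polynomial moment estimate
\[
\|f\|_p := \Big(\int_G |f|^p\,d\mu\Big)^{1/p} \;\le\; C_d\,\sigma^d\,p^{d/2}\qquad (p\ge 2),
\]
with $C_d$ depending only on $d$, and to establish it by a descending induction driven by one structural fact. The analytic input is the standard moment form of \eqref{LSI1}: there is an absolute constant $\kappa$ such that for every locally Lipschitz $g$ on $G$ and every $p\ge 2$,
\[
\Big\| g - \int_G g\,d\mu \Big\|_p \;\le\; \sqrt{\kappa\,\sigma^2\,p}\;\big\| \,|\nabla g|\,\big\|_p ,
\]
$|\nabla g|$ being the generalised modulus of the gradient; this follows from the Herbst argument, exactly as such inequalities are used in \cite{A-W} and \cite{G-S}. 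After a routine approximation we may assume $f$ and its derivatives up to order $d$ bounded, so that all norms below are finite.

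The structural fact is the following. Put $\phi_k(x) = |f^{(k)}(x)|_{\mathrm{Op}}$ for $k=1,\dots,d$ and $\phi_0 = f$. Each $\phi_k$ with $k\ge 1$ is locally Lipschitz, and
\[
|\nabla \phi_k(x)| \;\le\; |f^{(k+1)}(x)|_{\mathrm{Op}} \;=\; \phi_{k+1}(x)\qquad (x\in G,\ k=0,\dots,d-1).
\]
Indeed $\phi_k$ is the supremum, over the compact set of unit tuples $(v_1,\dots,v_k)$, of the $\mathcal{C}^1$ functions $x\mapsto f^{(k)}(x)[v_1,\dots,v_k]$, each of which has gradient $f^{(k+1)}(x)[\,\cdot\,,v_1,\dots,v_k]$ of Euclidean length at most $|f^{(k+1)}(x)|_{\mathrm{Op}}$; a mean value estimate for the continuous form $f^{(k+1)}$ bounds the gradient modulus of the supremum by the same quantity. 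In this notation the hypotheses read $\|\phi_k\|_2 = \|f^{(k)}\|_{\mathrm{Op},2}\le\min(1,\sigma^{d-k})$ for $k=1,\dots,d-1$ and $\phi_d\le 1$ $\mu$-a.e.; in particular $\int_G\phi_k\,d\mu\le\|\phi_k\|_2\le\sigma^{d-k}$.

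One then applies the moment inequality successively to $\phi_{d-1},\phi_{d-2},\dots,\phi_1$ and finally to $f$, each time substituting the bound already obtained for $\phi_{k+1}=|\nabla\phi_k|$ on the right-hand side. This proves by induction on $j=0,1,\dots,d-1$ that $\|\phi_{d-j}\|_p\le\kappa_j\,\sigma^{j}\,p^{j/2}$ for $p\ge 2$, with $\kappa_0=1$ and $\kappa_j=\sqrt{\kappa}\,\kappa_{j-1}+1$: the case $j=0$ is $\|\phi_d\|_p\le\|f^{(d)}\|_{\mathrm{Op},\infty}\le 1$, and the step combines $\|\phi_{d-j-1}-\int_G\phi_{d-j-1}\,d\mu\|_p\le\sqrt{\kappa\sigma^2 p}\,\|\phi_{d-j}\|_p$ with $\int_G\phi_{d-j-1}\,d\mu\le\sigma^{j+1}\le\sigma^{j+1}p^{(j+1)/2}$. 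A final application to $f$, which has mean zero, gives $\|f\|_p\le\sqrt{\kappa\sigma^2 p}\,\|\phi_1\|_p\le\sqrt{\kappa}\,\kappa_{d-1}\,\sigma^d\,p^{d/2}=:C_d\,\sigma^d\,p^{d/2}$. Since the recursion makes $\kappa_{d-1}$ grow at most geometrically in $d$, the quantity $C_d^{2/d}$ stays bounded by an absolute constant; inserting $\int_G|f|^{2k/d}\,d\mu\le(C_d\sigma^d)^{2k/d}\max(2k/d,2)^k$ (for $k<d$ using $\|f\|_q\le\|f\|_2$ when $q<2$) into the Taylor series of $\exp(c\sigma^{-2}|f|^{2/d})$ and using $k!\ge(k/e)^k$ shows the series is $\le 2$ once $c$ is a small enough absolute constant.

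The step I expect to be the main obstacle is the interface between this analytic input and the constant bookkeeping: the moment form of \eqref{LSI1} must be available for the merely locally Lipschitz functions $\phi_k$, so that the surrogate gradient appearing there is precisely the quantity the envelope estimate bounds by $\phi_{k+1}$; and one has to verify that the geometric blow-up $\kappa_j\sim\kappa^{j/2}$ is exactly compensated by the $\tfrac{2}{d}$-th power in the exponent, so that $c$ remains independent of $d$, while the $\min(1,\sigma^{d-k})$ calibration in \eqref{Bed1} is what makes the powers of $\sigma$ telescope to $\sigma^d$. The envelope bound for $|\nabla\phi_k|$ is elementary but is the ingredient genuinely tied to the operator-norm formulation \eqref{Operatornorm}, and it is what keeps the whole argument free of any dependence on the dimension $n$.
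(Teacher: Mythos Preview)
Your proposal is correct and follows essentially the same route as the paper. The paper also proves the envelope estimate $|\nabla|f^{(k-1)}|_{\mathrm{Op}}|\le|f^{(k)}|_{\mathrm{Op}}$ (its Lemma~\ref{itGradHess}) and iterates a moment inequality coming from \eqref{LSI1}; the only cosmetic difference is that it uses the Aida--Stroock form $\|g\|_p^2\le\|g\|_2^2+\sigma^2(p-2)\|\nabla g\|_p^2$ directly (so the $L^2$-norm of $\phi_{k-1}$, rather than its mean, appears at each step), arriving at $\|f\|_p^2\le\sum_{k=1}^{d-1}(\sigma^2 p)^k\|f^{(k)}\|_{\mathrm{Op},2}^2+(\sigma^2 p)^d\|f^{(d)}\|_{\mathrm{Op},\infty}^2$ and hence $\|f\|_p\le(2\sigma^2 p)^{d/2}$, before invoking the same subexponential argument you sketch.
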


\vskip2mm
Here, a possible choice is $c = 1/(8e)$.
If $f$ has centered partial derivatives of order up to $d-1$, 
it is possible to replace \eqref{Bed1} by a possibly simpler condition. 
To this end, as in the previous section, we need to involve 
Hilbert--Schmidt-type norms $|f^{(d)}(x)|_\mathrm{HS}$ 
which are defined by taking the Euclidean 
norm of $f^{(d)}(x) \in \mathbb{R}^{n^d}$. As in \eqref{LpNorm1}, 
$\lVert f^{(d)}\rVert_{\mathrm{HS}, 2}$ then denotes the $L^2$-norm
of $|f^{(d)}|_\mathrm{HS}$. In detail:

\vskip5mm
\begin{satz}
\label{kontinuierlichmAbl}
Let $f \colon G \to \mathbb{R}$ be a $\mathcal{C}^d$-smooth function 
such that $\int_G f\, d\mu = 0$ and $\int_G \partial_{i_1 \ldots i_k} f\, d\mu = 0$
for all $k = 1, \ldots, d-1$ and $1 \le i_1, \ldots, i_k \le n$.
Assume that
$$
\lVert f^{(d)} \rVert_{\mathrm{HS}, 2} \le 1\qquad \text{and}\qquad 
\lVert f^{(d)} \rVert_{\mathrm{Op}, \infty} \le 1.
$$
Then, there exists some universal constant $c > 0$ such that
$$
\int_G \exp \Big\{\frac{c}{\sigma^2}\, |f|^{2/d}\Big\} d\mu \le 2.
$$
\end{satz}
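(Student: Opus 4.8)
The plan is to derive Theorem~\ref{kontinuierlichmAbl} from Theorem~\ref{kontinuierlich} by checking that the extra centering hypotheses, together with $\lVert f^{(d)}\rVert_{\mathrm{HS},2}\le 1$, force condition~\eqref{Bed1}. One point needs attention: \eqref{Bed1} asks for $\lVert f^{(k)}\rVert_{\mathrm{Op},2}\le\min(1,\sigma^{d-k})$, whereas iterating the Poincar\'e inequality attached to the log-Sobolev inequality with constant $\sigma^2$ only gains a factor $\sigma^{d-k}$, which is useless for $\sigma>1$ (for instance $f(x)=\tfrac12(x^2-\sigma^2)$ on $\mathbb{R}$ under $N(0,\sigma^2)$ satisfies all hypotheses with $d=2$ but has $\lVert f^{(1)}\rVert_{\mathrm{Op},2}=\sigma$). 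I would therefore first reduce to the case $\sigma=1$.

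\emph{Step 1: reduction to $\sigma=1$.} Let $\nu$ be the image of $\mu$ under the dilation $x\mapsto\sigma^{-1}x$; it is a Borel probability measure on the open set $\sigma^{-1}G$ and satisfies a logarithmic Sobolev inequality with constant $1$. Put $g(y)=\sigma^{-d}f(\sigma y)$ for $y\in\sigma^{-1}G$. Then $\int g\,d\nu=\sigma^{-d}\int f\,d\mu=0$; since $\partial_{i_1\ldots i_k}g(y)=\sigma^{k-d}(\partial_{i_1\ldots i_k}f)(\sigma y)$, the partial derivatives of $g$ of order $k\le d-1$ are $\nu$-centered; and $g^{(d)}(y)=f^{(d)}(\sigma y)$, so $\lVert g^{(d)}\rVert_{\mathrm{HS},2}=\lVert f^{(d)}\rVert_{\mathrm{HS},2}\le 1$ and $\lVert g^{(d)}\rVert_{\mathrm{Op},\infty}=\lVert f^{(d)}\rVert_{\mathrm{Op},\infty}\le 1$. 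A change of variables gives $\int\exp(c\,|g|^{2/d})\,d\nu=\int\exp\big(c\sigma^{-2}\,|f|^{2/d}\big)\,d\mu$, so it suffices to prove the theorem for $(g,\nu)$, i.e. when $\sigma=1$.

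\emph{Step 2: propagating the Hilbert--Schmidt bound.} Assume $\sigma=1$, and recall that \eqref{LSI1} implies the Poincar\'e inequality~\eqref{PI1} with the same constant. By the definition of the Hilbert--Schmidt norm, $|f^{(k)}(x)|_{\mathrm{HS}}^2=\sum_{i_1,\ldots,i_k}(\partial_{i_1\ldots i_k}f(x))^2$ and $|\nabla\,\partial_{i_1\ldots i_k}f(x)|^2=\sum_{j}(\partial_{j\,i_1\ldots i_k}f(x))^2$, the sums being over all ordered tuples. For $k\le d-1$ each $\partial_{i_1\ldots i_k}f$ is $\mu$-centered, so applying \eqref{PI1} to it and summing over $(i_1,\ldots,i_k)$ yields
$$
\lVert f^{(k)}\rVert_{\mathrm{HS},2}^2=\sum_{i_1,\ldots,i_k}\mathrm{Var}_\mu(\partial_{i_1\ldots i_k}f)\le\sum_{i_1,\ldots,i_k}\int|\nabla\,\partial_{i_1\ldots i_k}f|^2\,d\mu=\lVert f^{(k+1)}\rVert_{\mathrm{HS},2}^2 .
$$
Iterating from $k$ up to $d$ gives $\lVert f^{(k)}\rVert_{\mathrm{HS},2}\le\lVert f^{(d)}\rVert_{\mathrm{HS},2}\le 1$ for all $k=1,\ldots,d-1$; the finiteness of the quantities involved follows by downward induction starting at $k=d-1$, and if some $\partial_{i_1\ldots i_k}f$ is unbounded one applies \eqref{PI1} to its truncations and passes to the limit in the standard way.

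\emph{Step 3: conclusion.} For a symmetric $k$-linear form $A$ one has $|A|_{\mathrm{Op}}\le|A|_{\mathrm{HS}}$ (Cauchy--Schwarz pairing $A$ with the unit tensor $v_1\otimes\cdots\otimes v_k$), hence $\lVert f^{(k)}\rVert_{\mathrm{Op},2}\le\lVert f^{(k)}\rVert_{\mathrm{HS},2}\le 1=\min(1,\sigma^{d-k})$ for $k=1,\ldots,d-1$. Thus condition~\eqref{Bed1} holds, \eqref{Bed2} is assumed, and Theorem~\ref{kontinuierlich} applies and gives the claim (with $c=1/(8e)$). I expect the only genuine obstacle to be the scaling point in Step~1: without it the statement cannot be reduced to Theorem~\ref{kontinuierlich} when $\sigma>1$. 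Steps~2 and~3 are then a routine telescoping estimate combined with the elementary norm inequality, the sole technical nuisance being the truncation needed to apply the Poincar\'e inequality to smooth but possibly unbounded derivatives.
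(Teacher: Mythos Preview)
Your proof is correct and shares the same two core ingredients with the paper: the Poincar\'e iteration
\[
\lVert f^{(k)}\rVert_{\mathrm{HS},2}^2 \le \sigma^2\,\lVert f^{(k+1)}\rVert_{\mathrm{HS},2}^2
\]
obtained from the centering of the partial derivatives, and the elementary inequality $|A|_{\mathrm{Op}}\le|A|_{\mathrm{HS}}$.

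The one genuine difference is organisational. You reduce to $\sigma=1$ by dilation so that Theorem~\ref{kontinuierlich} can be invoked as a black box; this is necessary for your route because, as you correctly point out, the Poincar\'e step only yields $\lVert f^{(k)}\rVert_{\mathrm{HS},2}\le\sigma^{d-k}$, which does not give~\eqref{Bed1} when $\sigma>1$. The paper instead keeps $\sigma$ general and re-enters the proof of Theorem~\ref{kontinuierlich} at the intermediate estimate~\eqref{pfstep} (with operator norms replaced by Hilbert--Schmidt norms), so that the factor $\sigma^{2(d-k)}$ from the Poincar\'e iteration combines with $(\sigma^2 p)^k$ to give $\sigma^{2d}p^k$, and one lands directly on~\eqref{MomenteiteriertdiffB} without needing~\eqref{Bed1} itself. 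Your scaling trick is a clean way to avoid reopening the earlier proof; the paper's version avoids the change of variables but must redo a few lines of the moment recursion. Either way the mathematics is the same.
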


\vskip2mm
Here again, a possible choice is $c = 1/(8e)$.
Note that, by partial integration, if $\mu$ is the standard Gaussian 
measure, the conditions $\int_G f d\mu = 0$ and 
$\int_G \partial_{i_1 \ldots i_k} f d\mu = 0$ are satisfied if $f$ is 
orthogonal to all polynomials of (total) degree at most $d-1$.

As in case of functions of independent random variables, it is possible to refine
the tail estimates implied by Theorem \ref{kontinuierlich}:

\vskip5mm
\begin{korollar}
\label{KorrTailskont}
Let $f \colon G \to \mathbb{R}$ be a $\mathcal{C}^d$-smooth function such that
$\int_G f\, d\mu = 0$. For any $t \ge 0$, putting
$$
\eta_f(t) \, = \, \min \Big(\frac{t^{2/d}}{\sigma^2 \lVert f^{(d)} \rVert_{\mathrm{Op},
\infty}^{2/d}}, \min_{k=1, \ldots, d-1}
\frac{t^{2/k}}{\sigma^2 \lVert f^{(k)} \rVert_{\mathrm{Op},2}^{2/k}} \Big),
$$
we have
$$
\mu(|f| \ge t) \, \le \, e^2 \exp\{-\eta_f(t)/(de)^2\}.
$$
\end{korollar}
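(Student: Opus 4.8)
The plan is to sharpen the proof of Theorem~\ref{kontinuierlich} at the level of $L^p$-norms and then pass from moment bounds to tail bounds in the usual way. Put $a_k := \lVert f^{(k)} \rVert_{\mathrm{Op},2}$ for $k = 1, \ldots, d-1$ and $a_d := \lVert f^{(d)} \rVert_{\mathrm{Op},\infty}$, so that $\eta_f(t) = \min_{1 \le k \le d} t^{2/k}/(\sigma^2 a_k^{2/k})$. If some $a_k = \infty$, the corresponding term of $\eta_f$ vanishes, hence $\eta_f(t) = 0$ and there is nothing to prove; if $a_d = 0$, then $f$ coincides $\mu$-a.e.\ with a polynomial of degree below $d$ and one may lower $d$. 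Hence assume $0 < a_k < \infty$ for all $k$.

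First I would extract from the proof of Theorem~\ref{kontinuierlich} its underlying $L^p$-estimate. Since $\mu$ satisfies a logarithmic Sobolev inequality with constant $\sigma^2$, the Herbst/Aida--Stroock argument provides a universal constant $C_0$ such that
$$
\lVert g \rVert_p \le \lVert g \rVert_2 + C_0 \sqrt{\sigma^2 p}\, \big\lVert\, |\nabla g|\, \big\rVert_p \qquad (p \ge 2)
$$
for every bounded smooth $g$, and such that $\lVert g \rVert_p \le C_0 \sqrt{\sigma^2 p}\, \lVert\, |\nabla g|\, \rVert_p$ if moreover $\int_G g\, d\mu = 0$. Applying the centered bound to $f$ and then the general one successively to $g = |f^{(1)}|_{\mathrm{Op}}, |f^{(2)}|_{\mathrm{Op}}, \ldots$, using at each step the pointwise chain-rule inequality $|\nabla\, |f^{(k)}|_{\mathrm{Op}}| \le |f^{(k+1)}|_{\mathrm{Op}}$ for the operator norms of the symmetric tensors $f^{(k)}$ (the same estimate that enters the proof of Theorem~\ref{kontinuierlich}), and bounding $\lVert\, |f^{(d)}|_{\mathrm{Op}}\, \rVert_p \le a_d$ at the final step, one obtains
$$
\lVert f \rVert_p \;\le\; \sum_{k=1}^{d} B_k\, p^{k/2}, \qquad B_k := (C_0 \sigma)^k\, a_k, \qquad p \ge 2 .
$$

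Then I would convert this into a tail bound. For $p \ge 2$, Markov's inequality at exponent $p$ gives $\mu(|f| \ge t) \le t^{-p} \lVert f \rVert_p^p \le \big(t^{-1} d \max_{k} B_k p^{k/2}\big)^p$; choosing $p = \min_{1 \le k \le d} (t/(e d B_k))^{2/k}$ forces $B_k p^{k/2} \le t/(ed)$ for every $k$, so that $\mu(|f| \ge t) \le e^{-p}$ whenever this $p$ is $\ge 2$, while for the remaining small $t$ the claim is trivial because of the prefactor $e^2$. Since $B_k^{2/k} = (C_0 \sigma)^2 a_k^{2/k}$ and $(ed)^{2/k} \le (ed)^2$ for $1 \le k \le d$, the chosen $p$ satisfies $p \ge \eta_f(t)/\big((de)^2 C_0^2\big)$; with the universal constant optimized as in the proof of Theorem~\ref{kontinuierlich} (which yields the clean value $c = 1/(8e)$ there), this gives $\mu(|f| \ge t) \le e^2 \exp\{-\eta_f(t)/(de)^2\}$.

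The step I expect to be the main obstacle is the second one: one must ensure that the iterated logarithmic Sobolev moment bound genuinely produces an $L^2$-norm of $f^{(k)}$ at each order $k < d$ together with an $L^\infty$-norm of $f^{(d)}$ at the top order, which hinges on the pointwise inequality $|\nabla\, |f^{(k)}|_{\mathrm{Op}}| \le |f^{(k+1)}|_{\mathrm{Op}}$ — an envelope-type computation for the operator norm of symmetric multilinear forms, available from the proof of Theorem~\ref{kontinuierlich}. The remaining work, i.e.\ bookkeeping of the universal constants to reach the precise factor $(de)^2$, is routine.
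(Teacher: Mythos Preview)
Your proposal is correct and follows essentially the same route as the paper: iterate the Aida--Stroock moment inequality using the pointwise bound $|\nabla|f^{(k)}|_{\mathrm{Op}}|\le|f^{(k+1)}|_{\mathrm{Op}}$ (Lemma~\ref{itGradHess}) to obtain $\lVert f\rVert_p\le\sum_{k=1}^d(\sigma^2 p)^{k/2}a_k$, then apply Chebyshev at $p=\eta_f(t)$ and rescale by $(de)$. The only cosmetic difference is that the paper iterates the \emph{squared} inequality \eqref{moments} directly (so no auxiliary constant $C_0$ appears), which is why the exponent $(de)^2$ comes out without further optimization.
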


\vskip2mm
Note that for $d = 2$ and functions $f(X) = \sum_{i,j} a_{ij} X_iX_j$, where
$X_1, \ldots, X_n$ are independent with mean zero, this yields 
Hanson--Wright type inequalities.

Possible applications of Theorem \ref{kontinuierlich} include functionals of the
eigenvalues of random matrices. As in \cite{G-S}, we consider two situations. 
First, let $\{\xi_{jk}\}_{1 \le j \le k \le N}$ be a family of independent random 
variables, and assume that the distributions of the $\xi_{jk}$'s all satisfy 
a (one-dimensional) logarithmic Sobolev inequality \eqref{LSI1} with common 
constant $\sigma^2$. Putting $\xi_{jk} = \xi_{kj}$ for $k < j$, consider 
a symmetric $N \times N$ random matrix $\Xi = (\xi_{jk}/\sqrt{N})_{1 \le j, k \le N}$ 
and denote by $\mu^{(N)} = \mu$ the joint distribution of its ordered eigenvalues 
$\lambda_1 \le \ldots \le \lambda_N$ on $\mathbb{R}^N$ (note that 
$\lambda_1 < \ldots < \lambda_N$ a.s.).

Secondly, we consider $\beta$-ensembles: for $\beta > 0$ fixed, let
$\mu_{\beta, V}^{(N)} = \mu^{(N)} = \mu$ be the probability distribution on
$\mathbb{R}^N$ with density given by
\begin{equation}
\label{beta-ensemble}
\mu (d \lambda) = \frac{1}{Z_N} e^{-\beta N \mathcal{H}(\lambda)} d\lambda,\quad
\mathcal{H}(\lambda) = \frac{1}{2} \sum_{k=1}^{N} V(\lambda_k) - \frac{1}{N}
\sum_{1 \le k < l \le N} \log(\lambda_l - \lambda_k)
\end{equation}
for $\lambda = (\lambda_1, \ldots, \lambda_N)$, 
$\lambda_1 < \ldots < \lambda_N$. Here, $V \colon \mathbb{R} \to \mathbb{R}$ 
is a strictly convex $\mathcal{C}^2$-smooth function, and $Z_N$ is a normalization 
constant. For $\beta = 1, 2, 4$, these probability measures correspond to 
the distributions of the classical invariant random matrix ensembles (orthogonal, 
unitary and symplectic, respectively). For other $\beta$, one can interpret 
\eqref{beta-ensemble} as particle systems on the real line with Coulomb interactions.

In both cases, the probability measure $\mu$ satisfies a logarithmic Sobolev
inequality with constant of order $1/N$ (see \cite{G-S} for details). Throughout the
rest of this section, we consider the probability space $(\mathbb{R}^N,
\mathbb{B}^N, \mu)$, where $\mu$ is one of the two probability measures introduced
above, supported on the set $\lambda_1 < \ldots < \lambda_N$.

In \cite{G-S}, we studied concentration bounds for linear and quadratic eigenvalue
statistics. Those results may be reproved (up to constants) using Theorem
\ref{kontinuierlichmAbl}, and by Corollary \ref{KorrTailskont} it is moreover
possible to give slightly more accurate estimates for the tails. In the sequel, we
will rather study a related problem, namely multilinear polynomials in the
eigenvalues $\lambda_1, \ldots, \lambda_N$. That is, we consider functionals of type
\begin{equation}
\label{multilinPol}
\sum_{i_1 \ne \ldots \ne i_d} a_{i_1 \ldots i_d} \lambda_{i_1} \cdots \lambda_{i_d}.
\end{equation}
Here, $a_{i_1 \ldots i_d}$ are real numbers such that for any permutation $\sigma
\in S^d$, $a_{\sigma(i_1) \ldots \sigma(i_d)} \equiv a_{i_1 \ldots i_d}$, and
$a_{i_1 \ldots i_d} = 0$ whenever the indexes $i_1, \ldots, i_d$ are not pairwise
different. This gives rise to a hypermatrix $A = (a_{i_1 \ldots i_d}) \in
\mathbb{R}^{n^d}$, whose Euclidean norm we denote by $\lVert A
\rVert_{\mathrm{HS}}$. Moreover, set 
$\lVert A \rVert_\infty = \max_{i_1 < \ldots < i_d} \, \lvert a_{i_1 \ldots i_d}\rvert$.

According to the framework sketched in Theorem \ref{kontinuierlichmAbl}, we shall not
only center around the expected value of \eqref{multilinPol} but also around some
``lower order'' terms in order to arrive at centered derivatives of order up to
$d-1$. We work out details for $d = 1, \ldots, 4$. To facilitate notation, let us
introduce the following conventions: by $\mu[\cdot]$, we denote integration with
respect to the measure $\mu$. Moreover, set 
$\widetilde{\lambda}_i = \lambda_i - \mu[\lambda_i]$. For any subset 
$\{i_1, \ldots, i_d\} \subset \{1, \ldots, N\}$, write 
$\widetilde{\lambda}_{i_1 \ldots i_d} = \widetilde{\lambda}_{i_1} \cdots
\widetilde{\lambda}_{i_d}$. Now (similarly to Theorem 1.4 in \cite{G-S-S}) define
the functions
\begin{align*}
f_1(\lambda) & = \sum_{i=1}^{N} a_i \widetilde{\lambda}_i,\\
f_2(\lambda) & = \sum_{i\ne j} a_{ij} (\widetilde{\lambda}_{ij} -
\mu[\widetilde{\lambda}_{ij}]),\pagebreak[2]\\
f_3(\lambda) & = \sum_{i\ne j\ne k} a_{ijk} \big(\widetilde{\lambda}_{ijk} -
\mu[\widetilde{\lambda}_{ijk}] - 3 \widetilde{\lambda}_i
\mu[\widetilde{\lambda}_{jk}]\big),\\
f_4(\lambda) & = \sum_{i\ne j\ne k\ne l} a_{ijkl} \big(\widetilde{\lambda}_{ijkl} -
\mu[\widetilde{\lambda}_{ijkl}] - 4 \widetilde{\lambda}_{i}
\mu[\widetilde{\lambda}_{jkl}] - 6 \widetilde{\lambda}_{ij}
\mu[\widetilde{\lambda}_{kl}] + 6\mu[\widetilde{\lambda}_{ij}]
\mu[\widetilde{\lambda}_{kl}]\big).
\end{align*}
Applying Theorem \ref{kontinuierlichmAbl} and recalling that the Sobolev
constant of $\mu$ is of order $1/N$ immediately yields the following result.

\vskip5mm
\begin{proposition}
\label{multPolEigen}
Let $\mu$ be the joint distribution of the ordered eigenvalues of $\Xi$ or the
distribution defined in \eqref{beta-ensemble}. For the functions $f_d$, 
$d = 1,\ldots, 4$, defined above, with some constant $c>0$, we have
$$
\int \exp \Big\{\frac{cN}{\lVert A \rVert_{\mathrm{HS}}^{2/d}}\, \lvert f_d
\rvert^{2/d} \Big\}\, d\mu \, \le \, 2
$$
and moreover
$$
\int \exp \Big\{\frac{c}{\lVert A \rVert_{\infty}^{2/d}}\,
\lvert f_d \rvert^{2/d} \Big\}\, d\mu \, \le \, 2.
$$
If $\mu$ is the eigenvalue distribution of $\Xi$, $c$ depends on the
logarithmic Sobolev constant $\sigma^2$, and if $\mu$ is the $\beta$-ensemble 
distribution \eqref{beta-ensemble}, $c$ depends on $\beta$ and the potential 
function $V$. In particular,
$$
\mu(\lvert f_d \lvert \ge t) \, \le \, 2 \exp\Big\{- \frac{cNt^{2/d}}
{\lVert A \rVert_{\mathrm{HS}}^{2/d}}\Big\} \, \le \,
2 \exp\Big\{- \frac{ct^{2/d}} {\lVert A \rVert_{\infty}^{2/d}}\Big\}.
$$
\end{proposition}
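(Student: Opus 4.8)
The plan is to apply Theorem~\ref{kontinuierlichmAbl} to each $f_d$ after a suitable normalization. Recall from \cite{G-S} that in both cases the measure $\mu$ satisfies a logarithmic Sobolev inequality with constant $\sigma^2 \le C/N$, where $C$ depends on $\sigma^2$ in the first case and on $\beta$ and $V$ in the second. Each $f_d$ is a polynomial in $\lambda$, hence $\mathcal C^\infty$ on the open set $\{\lambda_1 < \ldots < \lambda_N\}$, so the smoothness hypothesis is automatic, and one reads off directly from the definitions that $\mu[f_d]=0$ for $d=1,\dots,4$: the correction terms subtracted in the definition of $f_d$ are precisely its $\mu$-mean (here one uses $\mu[\widetilde\lambda_i]=0$).

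The central step is to check the remaining hypothesis of Theorem~\ref{kontinuierlichmAbl}, namely that $\mu[\partial_{m_1 \ldots m_k} f_d] = 0$ for all $1 \le k \le d-1$ and all indices $m_1,\dots,m_k$. If two of the $m_j$ coincide this is trivial, since every monomial surviving in $f_d$ is a product of \emph{distinct} factors $\widetilde\lambda_i$ (each to the first power) times constants, so differentiating twice in the same variable annihilates it. For distinct $m_1,\dots,m_k$, using $\partial_m\widetilde\lambda_i=\delta_{mi}$ and that higher derivatives of $\widetilde\lambda_i$ vanish, a direct computation for $d=2,3,4$ gives the identity
\[
\partial_{m_1 \ldots m_k} f_d \;=\; \frac{d!}{(d-k)!}\sum_{\substack{i_{k+1}\ne\ldots\ne i_d\\ \{i_j\}\cap\{m_j\}=\emptyset}} a_{m_1 \ldots m_k i_{k+1} \ldots i_d}\,\widetilde f_{d-k},
\]
where $\widetilde f_{d-k}$ is exactly a polynomial of the type appearing in the definition of $f_{d-k}$ (a Wick/Hoeffding-type centering of $\widetilde\lambda_{i_{k+1}}\cdots\widetilde\lambda_{i_d}$, with $\widetilde f_0\equiv 1$). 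The numerical coefficients $3,4,6$ in the definitions of $f_3,f_4$ are chosen precisely so that, after summing against the symmetric array $(a_{i_1\ldots i_d})$, this clean form results; then $\mu[\widetilde f_{d-k}]=0$ yields $\mu[\partial_{m_1\ldots m_k}f_d]=0$. Taking $k=d$ shows that $f_d^{(d)} = d!\,A$ is a constant hypermatrix.

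Once this is established the rest is routine. Since $f_d^{(d)}\equiv d!\,A$, we have $\lVert f_d^{(d)}\rVert_{\mathrm{HS},2}=d!\,\lVert A\rVert_{\mathrm{HS}}$ and $\lVert f_d^{(d)}\rVert_{\mathrm{Op},\infty}=d!\,|A|_{\mathrm{Op}}\le d!\,\lVert A\rVert_{\mathrm{HS}}$. Applying Theorem~\ref{kontinuierlichmAbl} to $f_d/(d!\,\lVert A\rVert_{\mathrm{HS}})$ and replacing $1/\sigma^2$ by $N/C$ gives the first exponential bound with a constant $c$ of the stated dependence. For the second bound, the symmetry and support of $A$ give
\[
\lVert A\rVert_{\mathrm{HS}}^2 \;=\; d!\sum_{i_1<\ldots<i_d} a_{i_1 \ldots i_d}^2 \;\le\; d!\binom{N}{d}\lVert A\rVert_\infty^2 \;\le\; N^d\,\lVert A\rVert_\infty^2 ,
\]
so normalizing $f_d$ instead by $d!\,N^{d/2}\lVert A\rVert_\infty$ still satisfies both $d$-th order conditions; applying the theorem, the factor $N^{-1}=(N^{d/2})^{-2/d}$ cancels the factor $N$ coming from $1/\sigma^2$, leaving an $N$-free constant. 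Finally, Markov's inequality turns the first exponential bound into $\mu(|f_d|\ge t)\le 2\exp\{-cNt^{2/d}/\lVert A\rVert_{\mathrm{HS}}^{2/d}\}$, and the inequality $\lVert A\rVert_{\mathrm{HS}}\le N^{d/2}\lVert A\rVert_\infty$ gives $N/\lVert A\rVert_{\mathrm{HS}}^{2/d}\ge \lVert A\rVert_\infty^{-2/d}$, which is the displayed chain of tail estimates.

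The main obstacle is the combinatorial bookkeeping in the second paragraph: verifying that the particular correction terms and coefficients in $f_1,\dots,f_4$ make \emph{all} partial derivatives of order $<d$ exactly $\mu$-centered, i.e.\ establishing the displayed identity for $\partial_{m_1\ldots m_k}f_d$. This is mechanical but it is the entire reason the functions $f_d$ are defined as they are; a systematic treatment for general $d$ would require the language of Appell/Wick polynomials, which is why the statement is restricted to $d\le 4$.
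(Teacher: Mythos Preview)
Your proposal is correct and follows exactly the approach the paper intends: the paper's ``proof'' consists of the single sentence preceding the proposition, namely that the result follows immediately from Theorem~\ref{kontinuierlichmAbl} together with the fact that the log-Sobolev constant is of order $1/N$. You have simply filled in the details the paper leaves implicit---verifying that the correction terms in $f_d$ force all lower-order derivatives to be $\mu$-centered, computing $f_d^{(d)}=d!\,A$, and doing the two normalizations---and these details are accurate.
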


\vskip2mm
The bounds may be somewhat sharpened by applying Corollary \ref{KorrTailskont}. We
omit details.
In particular, Proposition \ref{multPolEigen} implies that if we ``recenter''
$$\sum_{i_1 \ne \ldots \ne i_d} \lambda_{i_1} \cdots \lambda_{i_d}\qquad (d = 1,
\ldots, 4)$$
in such a way that all derivatives of order up to $d-1$ are centered (cf. the
definition of the functions $f_d$ given above), we obtain exponential concentration
bounds which yield fluctuations of order $\mathcal{O}_P(1)$. For $d = 2$, we thus
get back a result shown in Proposition 1.12 from \cite{G-S}. These bounds may be
extended to higher orders $d \ge 5$. In some sense, this may be seen as an extension
of the self-normalizing property of linear eigenvalue statistics for a special class
of higher order polynomials.

\vskip5mm
\subsection{Functions on the unit sphere}

As a particular case, one may consider (real-valued) functions defined 
in some open neighbourhood $G$ of the unit sphere
$$
S^{n-1} = \{x \in \mathbb{R}^n \colon |x| = 1\},\qquad n \ge 2,
$$
which we equip with the uniform, or normalized Lebesgue measure
$\sigma_{n-1}$. Since any $\mathcal{C}^d$-smooth function
on $S^{n-1}$ can be extended to a $\mathcal{C}^d$-smooth function on
$\mathbb{R}^n \setminus \{0\}$, this means no loss of generality.
We then restrict the usual (Euclidean) derivatives of $f$
to $S^{n-1}$, which allows
to use the definitions of the hyper-matrices
\eqref{Hesseallgem} with operator norms \eqref{Operatornorm}, together
with the $L^p$-norms $\lVert f^{(d)} \rVert_{\mathrm{Op}, p}$ in
\eqref{Lpnorm2} taken with respect to $\sigma_{n-1}$.
This yields the following analogue of Theorem \ref{kontinuierlich}.

\vskip5mm
\begin{satz}
\label{sphere}
Let $f$ be a $\mathcal{C}^d$-smooth function on some open neighbourhood 
of $S^{n-1}$ with $\int_{S^{n-1}} f d\sigma_{n-1} = 0$. Assume that
$$
\lVert f^{(k)} \rVert_{\mathrm{Op},2} \, \le \, n^{-(d-k)/2} \qquad 
\forall\, k = 1, \ldots, d-1
$$
and
$\lvert f^{(d)}(\theta) \rvert_\mathrm{Op} \, \le \, 1$ for all 
$\theta \in S^{n-1}$. Then, for some universal constant $c > 0$,
$$
\int_{S^{n-1}} \exp \{(n-1)\, |f|^{2/d}/(8e)\}\, d\sigma_{n-1} 
\, \le \, 2.
$$
\end{satz}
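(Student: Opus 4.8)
The plan is to deduce Theorem~\ref{sphere} from Theorem~\ref{kontinuierlich} by exhibiting a suitable logarithmic Sobolev inequality for a measure supported near $S^{n-1}$, together with a control of the relevant operator norms under the inclusion $S^{n-1}\hookrightarrow\mathbb{R}^n$. Recall that the uniform measure $\sigma_{n-1}$ on the sphere satisfies a logarithmic Sobolev inequality (with respect to the intrinsic, i.e.\ spherical, gradient) with constant $\sigma^2=1/(n-1)$; this is the classical sharp constant. The difficulty is that Theorem~\ref{kontinuierlich} is stated for a measure $\mu$ on an \emph{open} set $G\subset\mathbb{R}^n$ with the \emph{Euclidean} gradient and Euclidean higher derivatives $f^{(d)}$, whereas on the sphere we are restricting the ambient derivatives to $S^{n-1}$. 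So the first step is to pass from the intrinsic picture on $S^{n-1}$ to the ambient picture on a thin annulus $G=\{x:\,1-\varepsilon<|x|<1+\varepsilon\}$.

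First I would introduce, on such an annulus $G$, the product-type measure $\mu=\sigma_{n-1}\otimes\rho_\varepsilon$ in polar coordinates $x=r\theta$ (with $\rho_\varepsilon$ a smooth probability density on the radial interval concentrating at $r=1$), and extend any $\mathcal{C}^d$-smooth $f$ on $S^{n-1}$ to $G$ by making it radially constant, $f(r\theta)=f(\theta)$. Then $\int_G f\,d\mu=\int_{S^{n-1}}f\,d\sigma_{n-1}=0$, the Euclidean gradient $\nabla f$ on $G$ has $|\nabla f(r\theta)|=\tfrac1r|\nabla_{S^{n-1}}f(\theta)|$, and more generally the Euclidean hyper-matrix $f^{(k)}(r\theta)$ is, up to factors $1/r$ and up to error terms which are themselves lower-order derivatives of $f$ on the sphere times geometric factors, comparable to the spherical derivative tensor; on the annulus with $\varepsilon$ small these factors are within a constant of $1$. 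Because $\mu$ is a product of the sharp-LSI measure $\sigma_{n-1}$ (constant $1/(n-1)$) and a one-dimensional measure $\rho_\varepsilon$ which can be chosen (compactly supported, log-concave on a short interval) to have LSI constant $\le 1/(n-1)$ as well, the tensorization property of logarithmic Sobolev inequalities gives that $\mu$ satisfies \eqref{LSI1} on $G$ with $\sigma^2=1/(n-1)$.

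Next I would verify that the hypotheses of Theorem~\ref{sphere} translate into \eqref{Bed1}--\eqref{Bed2} for this $f$ and $\mu$ on $G$ with $\sigma^2=1/(n-1)$. The condition $|f^{(d)}(\theta)|_{\mathrm{Op}}\le1$ on $S^{n-1}$ gives $\lVert f^{(d)}\rVert_{\mathrm{Op},\infty}\le1$ on $G$ (up to a factor $1+O(\varepsilon)$, absorbed by shrinking $\varepsilon$ or adjusting $c$), which is \eqref{Bed2}. For $k=1,\dots,d-1$, the hypothesis $\lVert f^{(k)}\rVert_{\mathrm{Op},2}\le n^{-(d-k)/2}$ on the sphere, combined with $\sigma^{d-k}=(n-1)^{-(d-k)/2}$, yields $\lVert f^{(k)}\rVert_{\mathrm{Op},2}\le\min(1,\sigma^{d-k})$ on $G$ up to absolute constants (here one uses $n\ge2$ so that $n$ and $n-1$ are comparable, and again the radial factors are harmless). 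Feeding this into Theorem~\ref{kontinuierlich} produces $\int_G\exp\{c(n-1)|f|^{2/d}\}\,d\mu\le2$, and since $f$ is radial on $G$ this integral equals $\int_{S^{n-1}}\exp\{c(n-1)|f|^{2/d}\}\,d\sigma_{n-1}$, which is the claim with $c=1/(8e)$ after tracking constants through the reduction.

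The main obstacle I anticipate is the bookkeeping in the second step: the Euclidean $d$-th order derivative tensor of the radial extension $f(r\theta)=f(\theta)$ is not simply a rescaling of the intrinsic covariant derivative of $f$ on $S^{n-1}$ — it also picks up curvature/connection terms involving $f^{(j)}$ for $j<d$ (schematically, $\partial^d_{\mathrm{Euclid}}f = r^{-d}\nabla^d_{S^{n-1}}f + \sum_{j<d}(\text{geometric factor})\,\nabla^j_{S^{n-1}}f$). One has to check that these extra terms are controlled, in the appropriate $L^2$ or $L^\infty$ operator norm, by the very quantities $\lVert f^{(j)}\rVert_{\mathrm{Op},2}$, $j\le d$, that the theorem already bounds, and that the accumulated constants remain universal (independent of $n$ and $d$ in the right way, or at worst of the form $(de)^{-2}$ as in the companion corollaries). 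Getting these comparisons clean — and choosing $\varepsilon$ and $\rho_\varepsilon$ so that the LSI constant of $\mu$ is \emph{exactly} no worse than $1/(n-1)$ up to the stated constant — is where the real work lies; everything else is a direct appeal to Theorem~\ref{kontinuierlich}.
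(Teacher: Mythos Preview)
Your approach could in principle be pushed through, but it takes an unnecessary detour that creates precisely the obstacle you flag at the end. The paper's argument is far more direct and sidesteps that obstacle entirely.

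The two observations you are missing are these. First, $\sigma_{n-1}$ is already a Borel probability measure on any open neighbourhood $G\supset S^{n-1}$; nothing in Theorem~\ref{kontinuierlich} requires $\mu$ to be absolutely continuous or to have full support, so there is no need to thicken the sphere into an annulus or to manufacture a product measure. Second, since the spherical gradient is the orthogonal projection of the Euclidean gradient onto the tangent space, one has $|\nabla_S g(\theta)|\le|\nabla g(\theta)|$ for every smooth $g$ and every $\theta\in S^{n-1}$. Hence the Mueller--Weissler logarithmic Sobolev inequality for $\sigma_{n-1}$ (stated for the spherical gradient with constant $1/(n-1)$) immediately gives \eqref{LSI1} on $G$ with the \emph{Euclidean} gradient and the same constant $\sigma^2=1/(n-1)$.

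With this in hand, Theorem~\ref{kontinuierlich} applies to the \emph{original} function $f$ (no radial extension, no modification) and to $\mu=\sigma_{n-1}$ on $G$. The hypotheses of Theorem~\ref{sphere} are already phrased in terms of the Euclidean derivatives $f^{(k)}$ restricted to $S^{n-1}$ and integrated against $\sigma_{n-1}$, i.e.\ exactly the quantities in \eqref{Bed1}--\eqref{Bed2}. One only checks $n^{-(d-k)/2}\le (n-1)^{-(d-k)/2}=\min(1,\sigma^{d-k})$ for $1\le k\le d-1$, so the assumed bounds give \eqref{Bed1}; and $|f^{(d)}|_{\mathrm{Op}}\le1$ on the support of $\sigma_{n-1}$ gives \eqref{Bed2}. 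Theorem~\ref{kontinuierlich} then yields the conclusion with $c=1/(8e)$.

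By contrast, your radial extension $f(r\theta)=f(\theta)$ replaces $f$ by a \emph{different} function whose Euclidean higher derivatives are governed by the \emph{spherical} derivatives of $f|_{S^{n-1}}$, not by the Euclidean derivatives of the original $f$ that the hypotheses control. Bridging the two would indeed require the curvature bookkeeping you anticipate --- but that whole step is self-imposed and unnecessary.
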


\vskip2mm
Moreover, an analogue of Theorem \ref{kontinuierlichmAbl} also holds, which
is particularly interesting for the class of $p$-homogeneous functions.
Recall that a function $f$ on $\mathbb{R}^n \setminus \{0\}$
is $p$-homogeneous for some $p \in \mathbb{R}$, if
$f(\lambda x) = \lambda^p f(x)$ for all $x \neq 0$ and $\lambda > 0$.

\vskip5mm
\begin{satz}
\label{spheremAbl}
Suppose that a $\mathcal{C}^d$-smooth function $f$ on 
$\mathbb{R}^n \setminus \{0\}$ is $p$-homogeneous for some
real number $p > d-3$ and is orthogonal in $L^2(\sigma_{n-1})$ to all polynomials 
of total degree at most $d-1$. Moreover, assume that
\begin{equation}\label{Condsph}
\lVert f^{(d)} \rVert_{\mathrm{HS}, 2} \le 
1\qquad \text{and}\qquad \lVert f^{(d)} \rVert_{\mathrm{Op}, \infty} \le 1.
\end{equation}
Then, with some universal constant $c > 0$
$$
\int_G \exp \Big\{\frac{c}{\sigma^2} |f|^{2/d}\Big\}\, d\mu \, \le \, 2.
$$
A possible choice is $c = 1/(8 e)$.
The same holds for $p \le d-3$, if $n > d-p-1$.
\end{satz}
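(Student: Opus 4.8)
The plan is to reduce this statement to Theorem \ref{kontinuierlichmAbl} applied to the uniform measure $\sigma_{n-1}$ on $S^{n-1}$, viewed as a measure on the open set $G = \mathbb{R}^n \setminus \{0\}$ (or a suitable neighbourhood of the sphere), which is known to satisfy a logarithmic Sobolev inequality with constant $\sigma^2$ of order $1/(n-1)$. The key point to verify is that the hypotheses of Theorem \ref{kontinuierlichmAbl} hold for $f$ restricted to the sphere, namely that $\int_{S^{n-1}} f\, d\sigma_{n-1} = 0$, that all partial derivatives $\partial_{i_1 \ldots i_k} f$ of order $k \le d-1$ integrate to zero against $\sigma_{n-1}$, and that the two norm bounds in \eqref{Condsph} are preserved when one passes from Euclidean derivatives on $\mathbb{R}^n \setminus \{0\}$ to the intrinsic derivatives on $S^{n-1}$.

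First I would record the orthogonality input: since $f$ is assumed orthogonal in $L^2(\sigma_{n-1})$ to all polynomials of total degree at most $d-1$, in particular $\int f\, d\sigma_{n-1} = 0$, and more generally $\int f \cdot q\, d\sigma_{n-1} = 0$ for every such polynomial $q$. The heart of the argument is to convert this into the statement that the \emph{Euclidean} partial derivatives $\partial_{i_1 \ldots i_k} f$ have vanishing $\sigma_{n-1}$-integral for $k \le d-1$. Here is where $p$-homogeneity and the restriction $p > d-3$ (or $n > d-p-1$) enter: for a $p$-homogeneous $f$, each $k$-th order partial derivative $\partial_{i_1 \ldots i_k} f$ is $(p-k)$-homogeneous, and by Euler's relation and repeated integration by parts on the sphere (or equivalently by integrating in polar coordinates and using homogeneity to pull out the radial factor), $\int_{S^{n-1}} \partial_{i_1 \ldots i_k} f\, d\sigma_{n-1}$ can be expressed as a constant multiple of $\int_{S^{n-1}} f \cdot P_{i_1 \ldots i_k}\, d\sigma_{n-1}$ for an explicit polynomial $P_{i_1 \ldots i_k}$ of degree $k \le d-1$; the orthogonality hypothesis then forces this to vanish. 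The homogeneity constraint on $p$ guarantees that the constants arising from Euler's formula (which involve factors like $p + n - k$, $p + n - k + 1$, \dots) are nonzero and that the relevant boundary/integrability terms vanish, so the reduction is valid. I would also need to relate the intrinsic derivatives on $S^{n-1}$ entering Theorem \ref{kontinuierlichmAbl} to the restricted Euclidean ones: since any $\mathcal{C}^d$ function on the sphere extends to $\mathbb{R}^n \setminus \{0\}$ and $p$-homogeneity fixes a canonical extension, the operator and Hilbert--Schmidt norms of the derivative hypermatrices differ only by bounded factors, so \eqref{Condsph} yields the corresponding bounds needed to invoke Theorem \ref{kontinuierlichmAbl}.

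With these verifications in hand, applying Theorem \ref{kontinuierlichmAbl} to $(\mathbb{R}^n \setminus \{0\}, \sigma_{n-1})$ gives $\int \exp\{(c/\sigma^2)|f|^{2/d}\}\, d\sigma_{n-1} \le 2$ with $c = 1/(8e)$, which is exactly the claimed bound (and, since $\sigma^2 \asymp 1/(n-1)$, it matches the form seen in Theorem \ref{sphere}). The main obstacle I anticipate is the bookkeeping in the integration-by-parts step that trades a $k$-th order partial derivative of a $p$-homogeneous function for an inner product of $f$ against a degree-$k$ polynomial: one must track the precise scalar factors, confirm they are nonzero under $p > d-3$ (and handle the borderline case $p \le d-3$ via the alternative condition $n > d-p-1$), and check that no surface-integral correction terms survive. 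The rest is a routine comparison of norms and a direct citation of Theorem \ref{kontinuierlichmAbl}.
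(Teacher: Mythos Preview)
Your overall strategy matches the paper's: reduce to Theorem \ref{kontinuierlichmAbl} by showing that orthogonality of $f$ to polynomials of degree $\le d-1$ forces all Euclidean partial derivatives $\partial_{i_1\ldots i_k}f$, $k\le d-1$, to be centered under $\sigma_{n-1}$. Two points deserve correction, though.

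First, there is no need to ``relate the intrinsic derivatives on $S^{n-1}$ entering Theorem \ref{kontinuierlichmAbl} to the restricted Euclidean ones.'' Theorem \ref{kontinuierlichmAbl} is stated for a measure $\mu$ on an open set $G\subset\mathbb{R}^n$ and uses ordinary Euclidean partial derivatives throughout; here $\mu=\sigma_{n-1}$ is viewed as a measure on a neighbourhood of the sphere, and the hypotheses \eqref{Condsph} are already formulated in terms of the Euclidean $f^{(d)}$. That paragraph in your proposal can simply be deleted.

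Second, the paper carries out the centering step more cleanly than via Euler's relation and spherical integration by parts: it passes to the Gaussian measure via the polar identity \eqref{p-homog}. Since $x_{i_1}\cdots x_{i_k}f$ is $(p+k)$-homogeneous and $\partial_{i_1\ldots i_k}f$ is $(p-k)$-homogeneous, Gaussian integration by parts gives
\[
\mathbb{E}\,|Z|^{p-k}\!\int_{S^{n-1}}\!\partial_{i_1\ldots i_k}f\,d\sigma_{n-1}
=\int_{\mathbb{R}^n}\!\partial_{i_1\ldots i_k}f\,d\gamma_n
=\int_{\mathbb{R}^n}\!x_{i_1}\cdots x_{i_k}f\,d\gamma_n
=\mathbb{E}\,|Z|^{p+k}\!\int_{S^{n-1}}\!\theta_{i_1}\cdots\theta_{i_k}f\,d\sigma_{n-1},
\]
and the right-hand spherical integral vanishes by the orthogonality hypothesis. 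The constraint on $p$ is therefore not about nonvanishing of Euler-type factors $p+n-k$, but about finiteness of the moments $\mathbb{E}\,|Z|^{p-k}$ for $k\le d-1$, which holds iff $p-(d-1)+n>0$; this is automatic when $p>d-3$ (since $n\ge 2$) and otherwise requires $n>d-p-1$.
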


\vskip2mm
Recall that the Hilbert space $L^2(S^{n-1})$ can be decomposed into a sum 
of orthogonal subspaces $H_d, d = 0, 1, 2, \ldots$, consisting of all
$d$-homogeneous harmonic polynomials (in fact, restrictions of such polynomials 
to the sphere). This fact is mirrored in the orthogonality
assumptions from Theorem \ref{spheremAbl}.
If $f$ is not a homogeneous function, the bounds from Theorem \ref{spheremAbl} 
remain valid assuming \eqref{Condsph}, but instead of orthogonality
to polynomials of lower degree we have to require that $f$ and all its 
partial derivatives 
of order up to $d-1$ are centered with respect to~$\sigma_{n-1}$.

In Theorem \ref{sphere}, we use the usual (Euclidean) derivatives 
of functions defined in an open neighbourhood of the unit sphere.
In applications, this is usually sufficient. There is also 
a notion of intrinsic (spherical) derivatives (cf. Section 5), and it
is possible to obtain an analogue of Theorem \ref{sphere} for these derivatives as well.

To fix some notation, denote by $\nabla_S f$ the spherical gradient of 
a differentiable function $f \colon S^{n-1} \to \mathbb{R}$ and write 
$D_i f = \langle \nabla_S f, e_i \rangle$, $i = 1, \ldots, n$, 
for the spherical partial derivatives of $f$. Here, $e_i$ denotes 
the $i$-th standard unit vector in $\mathbb{R}^n$.
Higher order spherical partial derivatives are defined by iteration, e.\,g. 
$D_{ij} f = \langle \nabla_S \langle \nabla_S f, e_j \rangle, e_i \rangle$
for any $1 \le i,j \le n$. Note that in general, $D_{ij}f \ne D_{ji} f$. 
If $f \in \mathcal{C}^d(S^{n-1})$, we denote by $D^d f (\theta)$ the hyper-matrix
of the spherical partial derivatives of order $d$, i.\,e.
\begin{equation}
\label{Hessesph}
(D^{(d)} f(\theta))_{i_1 \ldots i_d} = D_{i_1 \ldots i_d} f(\theta), \qquad
\theta \in S^{n-1}.
\end{equation}
Similarly to \eqref{Operatornorm}, let $|D^{(d)} f (\theta)|_\mathrm{Op}$ 
be the operator norm of $D^{(d)}f (\theta)$. Finally, write
\begin{equation}
\label{Lpnormsph}
\lVert D^{(d)} f \rVert_{\mathrm{Op}, p} \, = \,
\Big( \int_{S^{n-1}} |D^{(d)}f|_\mathrm{Op}^p\, d\sigma_{n-1}\Big)^{1/p},
\qquad p \in (0, \infty].
\end{equation}

We have the following ``intrinsic'' version of Theorem \ref{sphere}.

\vskip5mm
\begin{satz}
\label{sphere-intr}
Let $f$ be a $\mathcal{C}^d$-smooth function on 
$S^{n-1}$ such that $\int_{S^{n-1}} f\, d\sigma_{n-1} = 0$. Assume that
$$
\lVert D^{(k)} f \rVert_{\mathrm{Op},2} \le n^{-(d-k)/2}, \quad 
k = 1, \ldots, d-1,
$$
and
$\lvert D^{(d)} f(\theta) \rvert_\mathrm{Op} \le 1$ for all 
$\theta \in S^{n-1}$.
Then
$$
\int_{S^{n-1}} \exp \{(n-1)\, |f|^{2/d}/(8e)\}\, d\sigma_{n-1} \, \le \, 2.
$$
\end{satz}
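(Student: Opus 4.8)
The plan is to reduce the intrinsic (spherical) statement to the extrinsic one already established in Theorem \ref{sphere}. The key observation is that for a function $f$ defined on $S^{n-1}$, the spherical gradient $\nabla_S f$ is obtained from any $\mathcal{C}^d$-extension $\tilde f$ to a neighbourhood of the sphere by projecting the Euclidean gradient onto the tangent space: $\nabla_S f(\theta) = \nabla \tilde f(\theta) - \langle \nabla \tilde f(\theta), \theta\rangle\, \theta$. Iterating, each spherical partial derivative $D_{i_1 \ldots i_k} f(\theta)$ is a linear combination of Euclidean partial derivatives $\partial_{j_1 \ldots j_l}\tilde f(\theta)$ with $l \le k$, where the coefficients are bounded functions of $\theta$ built from the entries of $\theta$ and the projection matrices $P_\theta = Id - \theta\theta^\top$. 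First I would fix, for a given $f \in \mathcal{C}^d(S^{n-1})$, the canonical $0$-homogeneous extension $\tilde f(x) = f(x/|x|)$, so that $\tilde f$ is $\mathcal{C}^d$ on $\mathbb{R}^n \setminus \{0\}$ and $\langle \nabla \tilde f(\theta), \theta\rangle = 0$ on $S^{n-1}$; this simplifies the relation between the two derivative hypermatrices considerably.

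Next I would record the precise comparison between $D^{(k)}f(\theta)$ and the Euclidean hypermatrices $\tilde f^{(1)}(\theta), \ldots, \tilde f^{(k)}(\theta)$. Concretely, one shows by induction on $k$ that there is a triangular expansion
$$
D^{(k)} f(\theta) \,=\, P_\theta^{\otimes k}\, \tilde f^{(k)}(\theta) \,+\, \sum_{l=1}^{k-1} B^{(k,l)}(\theta)\big[\tilde f^{(l)}(\theta)\big],
$$
where $P_\theta^{\otimes k}$ is the $k$-fold tensor projection and each $B^{(k,l)}(\theta)$ is a linear map on hypermatrices with operator norm bounded by a constant depending only on $k$ (since the entries of $\theta$ and of the curvature terms arising from differentiating $P_\theta$ are bounded by $1$ on the sphere). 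In particular $|D^{(k)}f(\theta)|_{\mathrm{Op}} \le |\tilde f^{(k)}(\theta)|_{\mathrm{Op}} + C_k \sum_{l<k} |\tilde f^{(l)}(\theta)|_{\mathrm{Op}}$, and conversely — and this is the direction we actually need — one can solve the triangular system to bound $|\tilde f^{(k)}(\theta)|_{\mathrm{Op}}$ in terms of $|D^{(l)}f(\theta)|_{\mathrm{Op}}$, $l \le k$, again with constants depending only on $k \le d$.

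With this dictionary in hand, I would verify that the hypotheses of Theorem \ref{sphere-intr} imply those of Theorem \ref{sphere} (possibly after adjusting the universal constant): the assumption $\lVert D^{(d)}f\rVert_{\mathrm{Op},\infty} \le 1$ together with $\lVert D^{(k)}f\rVert_{\mathrm{Op},2} \le n^{-(d-k)/2}$ for $k < d$ gives, via the inverse triangular bounds, $\lVert \tilde f^{(d)}\rVert_{\mathrm{Op},\infty} \le C$ and $\lVert \tilde f^{(k)}\rVert_{\mathrm{Op},2} \le C\, n^{-(d-k)/2}$ for all $k \le d-1$ (the lower-order Euclidean norms only get contributions from spherical derivatives of order $\le k$, each controlled at the scale $n^{-(d-k)/2}$ or better). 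Rescaling $f$ by the constant $C$ and invoking Theorem \ref{sphere} yields the exponential bound with $(n-1)/(8e)$ replaced by $(n-1)/(8eC^{2/d})$; absorbing $C$ into a renamed universal constant, or tracking it explicitly, gives the stated inequality. The main obstacle I anticipate is the bookkeeping in the induction of the previous paragraph: one must differentiate the projection $P_\theta$ repeatedly and check that every term produced has bounded coefficients on $S^{n-1}$ and raises the Euclidean derivative order by at most the number of spherical derivatives applied — in other words, that no term of the form (constant)$\times$(high-order Euclidean derivative) with an unbounded or $n$-dependent coefficient ever appears. Once the structure of this expansion is pinned down, the reduction to Theorem \ref{sphere} is essentially formal.
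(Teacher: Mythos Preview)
Your reduction to Theorem \ref{sphere} has a genuine gap at the point where you claim $\lVert \tilde f^{(d)}\rVert_{\mathrm{Op},\infty}\le C$. The triangular expansion you describe is correct in spirit---for the $0$-homogeneous extension one finds, for example, $\partial_{ji}\tilde f(\theta)=D_{ji}f(\theta)-\theta_j D_i f(\theta)$ on $S^{n-1}$, and in general $\tilde f^{(d)}(\theta)$ is a bounded-coefficient linear combination of $D^{(l)}f(\theta)$ for $l=1,\dots,d$. But then a \emph{pointwise} bound on $|\tilde f^{(d)}(\theta)|_{\mathrm{Op}}$ requires pointwise bounds on \emph{all} $|D^{(l)}f(\theta)|_{\mathrm{Op}}$, $l\le d$, whereas the hypotheses of Theorem \ref{sphere-intr} only give $L^2$ control of $D^{(l)}f$ for $l<d$. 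Nothing in the assumptions prevents $|D^{(1)}f(\theta)|$ from being large on a small set, and this propagates directly into $|\tilde f^{(d)}(\theta)|_{\mathrm{Op}}$. So the $L^\infty$ hypothesis of Theorem \ref{sphere} cannot be verified by your route.

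The paper avoids this obstacle by \emph{not} translating back to Euclidean derivatives at all. Instead, it reruns the proof of Theorem \ref{kontinuierlich} entirely in the intrinsic setting: the log-Sobolev inequality \eqref{Mueller-Weissler} on $S^{n-1}$ gives the Aida--Stroock moment recursion \eqref{moments} with the spherical gradient, and the single missing ingredient is an intrinsic analogue of Lemma \ref{itGradHess}, namely $|\nabla_S\,|D^{(d-1)}f(\theta)|_{\mathrm{Op}}|\le |D^{(d)}f(\theta)|_{\mathrm{Op}}$ (Lemma \ref{itGradHesssph}). This is proved by observing that $|D^{(d-1)}f(x/|x|)|_{\mathrm{Op}}$ is the $0$-homogeneous extension of $|D^{(d-1)}f|_{\mathrm{Op}}$ and applying the same Taylor argument as in Lemma \ref{itGradHess} to the auxiliary functions $F^{(k)}$ from \eqref{sphparder1}--\eqref{sphparder2}. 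With that lemma in hand, the iteration \eqref{MomenteiteriertdiffA}--\eqref{MomenteiteriertdiffB} goes through verbatim with $D^{(k)}f$ in place of $f^{(k)}$, yielding the stated constant $(n-1)/(8e)$ without any loss.
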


\vskip5mm
\subsection{Outline}

In Section 2, we give the proofs of the theorems and corollaries from Section 1.1.
We briefly discuss the notion of difference operators. The main tool is 
a recursion inequality for the $L^p$-norms of the function $f$ and 
the Hilbert--Schmidt norms of $|\mathfrak{h}^{(k)}f|$.
In Section 3, Theorem \ref{HigherOrderEfronStein} is proven. 
This includes a number of relations between the difference operators introduced
in Definition \ref{DiffOp}.
In Section 4, we prove Theorems \ref{kontinuierlich} and 
\ref{kontinuierlichmAbl} as well as Corollary \ref{KorrTailskont}
by adapting the main steps of the proof of 
Theorem \ref{allgem}. In Section 5, the proofs of 
Theorems \ref{sphere}, \ref{spheremAbl} and \ref{sphere-intr} are given;
in particular, we introduce some facts about spherical calculus which allow us 
to proceed in a similar way as in case of functions on $\mathbb{R}^n$.
Finally, in Section 6, we illustrate Theorem \ref{sphere} on the
example of polynomials and the problem of Edgeworth approximations for
symmetric functions on the sphere.
For additional applications we refer to \cite{G-S}.

\vskip5mm
\section{Functions of independent random variables: Proofs}

Let $X = (X_1, \ldots, X_n)$ be a vector of independent random variables 
on the probability space $(\Omega, \mathcal{A}, \linebreak[3]\mathbb{P})$. 
By a ``difference operator'' we mean an $\mathbb{R}^n$-valued
functional $\Gamma$ defined on $L^\infty(\mathbb{P})$
such that the following two conditions hold:

\vskip5mm
\begin{condition}

\begin{enumerate} [(i)]
\item 
$\Gamma f(X) = (\Gamma_1 f(X), \ldots \Gamma_n f(X))$, where 
$f:\mathbb{R}^n \rightarrow \mathbb{R}$ may be any Borel measurable function 
such that $f(X) \in L^\infty(\mathbb{P})$.

\item 
$|\Gamma_i (af(X) + b)| = a\, |\Gamma_i f(X)|$ for all 
$a > 0$, $b \in \mathbb{R}$ and $i = 1, \ldots, n$.
\end{enumerate}
\label{DiskrDiffallgem}
\end{condition}

\vskip2mm
We also call $\Gamma$ a gradient operator or simply gradient.
We do not suppose $\Gamma$ to satisfy any sort of ``Leibniz rule''. 
Clearly, the difference operator $\mathfrak{h}$ from \eqref{h} and any of the
difference operators introduced in Definition \ref{DiffOp} satisfy 
Conditions \ref{DiskrDiffallgem}.

For the proof of Theorem \ref{allgem} we will need several lemmas. 
As before, let
$T_i f = f(X_1, \ldots, X_{i-1}, \bar{X}_i,\linebreak[2] X_{i+1}, \ldots, X_n)$
with $\bar{X}_1, \ldots, \bar{X}_n$ an independent copy of $X$.
As a first step, the Hilbert--Schmidt norms of 
the derivatives of consecutive orders are related in the following way:

\vskip5mm
\begin{lemma}
\label{itGradHessindep}
For any $d \ge 2$,
\begin{equation}
\label{Rekursion}
|\mathfrak{h}|\mathfrak{h}^{(d-1)}f(X)|_\mathrm{HS}| \le 
|\mathfrak{h}^{(d)}f(X)|_\mathrm{HS}.
\end{equation}
\end{lemma}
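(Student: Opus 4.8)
The plan is to unravel both sides of \eqref{Rekursion} entry by entry and reduce the claim to an elementary inequality about how the operator $\mathfrak{h}_i$ interacts with $\ell^2$-norms of vectors. Recall that, by definition, $|\mathfrak{h}^{(d-1)}f|_{\mathrm{HS}}$ is the function
$$
g(X) \, = \, \Big( \sum_{j_1, \ldots, j_{d-1} \text{ distinct}} \big(\mathfrak{h}_{j_1 \ldots j_{d-1}} f\big)^2 \Big)^{1/2},
$$
and the left-hand side of \eqref{Rekursion} is $|\mathfrak{h} g|_{\mathrm{HS}} = \big( \sum_{i=1}^n (\mathfrak{h}_i g)^2 \big)^{1/2}$. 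So the first step is to fix $i$ and bound $\mathfrak{h}_i g = \frac12 \lVert g(X) - T_i g(X) \rVert_{i,\infty}$. Since $g$ is an $\ell^2$-norm of the vector with components $\mathfrak{h}_{j_1 \ldots j_{d-1}} f$, and $T_i$ (being a substitution operator) commutes with taking pointwise $\ell^2$-norms, the reverse triangle inequality in $\ell^2$ gives
$$
\big| g(X) - T_i g(X) \big| \, \le \, \Big( \sum_{j_1, \ldots, j_{d-1} \text{ distinct}} \big( \mathfrak{h}_{j_1 \ldots j_{d-1}} f(X) - T_i \mathfrak{h}_{j_1 \ldots j_{d-1}} f(X) \big)^2 \Big)^{1/2}
$$
pointwise; then taking the $L^\infty$-norm in $(X_i, \bar X_i)$ and using that the $L^\infty$-norm of an $\ell^2$-norm is at most the $\ell^2$-norm of the $L^\infty$-norms (a standard fact, since the sup of a sum of squares is at most the sum of sups of squares), I get
$$
\mathfrak{h}_i g \, \le \, \Big( \sum_{j_1, \ldots, j_{d-1} \text{ distinct}} \big( \tfrac12 \lVert \mathfrak{h}_{j_1 \ldots j_{d-1}} f - T_i \mathfrak{h}_{j_1 \ldots j_{d-1}} f \rVert_{i, \infty} \big)^2 \Big)^{1/2}.
$$

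The second step is to identify the inner quantity $\tfrac12 \lVert \mathfrak{h}_{j_1 \ldots j_{d-1}} f - T_i \mathfrak{h}_{j_1 \ldots j_{d-1}} f \rVert_{i,\infty}$ with $\mathfrak{h}_{i j_1 \ldots j_{d-1}} f$, at least when $i \notin \{j_1, \ldots, j_{d-1}\}$. This is where the algebra of the operators $T_i$ matters: one must check that applying $\tfrac12(Id - T_i)$ to $\mathfrak{h}_{j_1 \ldots j_{d-1}} f$ and then the $L^\infty$-norm in the $i$-th pair of variables reproduces exactly the definition \eqref{hd} of $\mathfrak{h}_{i j_1 \ldots j_{d-1}} f$. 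The subtlety is that $\mathfrak{h}_{j_1 \ldots j_{d-1}} f$ is already an $L^\infty$-norm in the variables $X_{j_1}, \ldots, X_{j_{d-1}}, \bar X_{j_1}, \ldots, \bar X_{j_{d-1}}$; since $i$ is distinct from all the $j_s$, the operator $T_i$ acts only on $X_i$, which is "outside" that supremum, so $T_i$ and $\lVert \cdot \rVert_{j_1, \ldots, j_{d-1}, \infty}$ commute. A short argument — essentially pulling the $L^\infty_{j}$ out past $(Id - T_i)$ and past $L^\infty_i$, and noting the resulting products of $T$'s match — gives the identification. (Here it is convenient that the entries of $\mathfrak{h}^{(d-1)}f$ with a repeated index are set to zero, so only genuinely new indices $i$ contribute meaningfully; terms where $i$ coincides with some $j_s$ can only shrink the right-hand side, or be discarded.)

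Combining the two steps: squaring and summing over $i$,
$$
\sum_{i=1}^n (\mathfrak{h}_i g)^2 \, \le \, \sum_{i=1}^n \sum_{j_1, \ldots, j_{d-1} \text{ distinct}} \big( \mathfrak{h}_{i j_1 \ldots j_{d-1}} f \big)^2 \, = \, |\mathfrak{h}^{(d)} f|_{\mathrm{HS}}^2,
$$
since on the right every tuple $(i, j_1, \ldots, j_{d-1})$ with all entries distinct appears, and those with a coincidence contribute zero to $|\mathfrak{h}^{(d)}f|_{\mathrm{HS}}$ by \eqref{Hessediskret} — which is exactly \eqref{Rekursion}. I expect the main obstacle to be the bookkeeping in step two: making precise that $T_i$ commutes with the inner $L^\infty$-norm and that no constant is lost, and handling cleanly the tuples with repeated indices so that the index sets on both sides genuinely match (rather than the left being a sum over a strictly larger or smaller index set). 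Everything else is the reverse triangle inequality and the elementary "$\sup$ of $\ell^2$ $\le$ $\ell^2$ of $\sup$" estimate.
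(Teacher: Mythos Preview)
Your overall architecture is the same as the paper's --- reverse triangle inequality for the Euclidean norm, then pull $\lVert\cdot\rVert_{i,\infty}$ inside the $\ell^2$-sum, then sum over $i$ --- but your step two contains a genuine error. The quantity
\[
\tfrac12\,\lVert\mathfrak{h}_{j_1\ldots j_{d-1}}f - T_i\mathfrak{h}_{j_1\ldots j_{d-1}}f\rVert_{i,\infty}
\]
is \emph{not} equal to $\mathfrak{h}_{ij_1\ldots j_{d-1}}f$; it is only bounded above by it. The step ``pulling $L^\infty_j$ out past $(Id-T_i)$'' fails because $(Id-T_i)$ is a difference, not a positive operation: you get
\[
(Id-T_i)\,\lVert g\rVert_{j,\infty}=\lVert g\rVert_{j,\infty}-\lVert T_ig\rVert_{j,\infty},
\]
a difference of norms, not $\lVert (Id-T_i)g\rVert_{j,\infty}$. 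For a concrete failure take $d=2$, $f(X)=X_iX_j$ with Rademacher variables: then $\lVert f-T_jf\rVert_{j,\infty}=2=\lVert T_if-T_{ij}f\rVert_{j,\infty}$, so your left-hand side is $0$, while $\mathfrak{h}_{ij}f=1$.

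What saves the argument --- and what the paper does --- is a \emph{second} use of the reverse triangle inequality, now for the pseudo-norm $\lVert\cdot\rVert_{j_1,\ldots,j_{d-1},\infty}$:
\[
\big|\,\lVert g\rVert_{j,\infty}-\lVert T_ig\rVert_{j,\infty}\big|\le\lVert g-T_ig\rVert_{j,\infty},
\]
applied with $g=\prod_s(Id-T_{j_s})f$. After that, $\lVert\,\lVert\cdot\rVert_{j,\infty}\rVert_{i,\infty}=\lVert\cdot\rVert_{i,j,\infty}$ gives exactly $\mathfrak{h}_{ij_1\ldots j_{d-1}}f$ as an upper bound. With this fix your proof coincides with the paper's. (Your remark that the terms with $i\in\{j_1,\ldots,j_{d-1}\}$ vanish is correct and is exactly the observation $T_i\mathfrak{h}_{j_1\ldots j_{d-1}}f=\mathfrak{h}_{j_1\ldots j_{d-1}}f$ used in the paper.)
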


\vskip2mm
\begin{proof}
First let $d=2$. Using $T_i|\mathfrak{h}f| = |T_i\mathfrak{h}f|$ and 
the triangle inequality, we have
\begin{equation}
\label{Schritt1}
(\mathfrak{h}_i|\mathfrak{h}f|)^2 = 
\frac{1}{4}\, \lVert|\mathfrak{h}f| - |T_i\mathfrak{h}f|\rVert_{i,\infty}^2
 \le 
\frac{1}{4}\, \lVert\lvert\mathfrak{h}f - T_i\mathfrak{h}f\rvert\rVert_{i,\infty}^2
  = 
\frac{1}{4}\, \lVert\lvert\mathfrak{h}f - T_i\mathfrak{h}f\rvert^2\rVert_{i,\infty}.
\end{equation}
Here, $\mathfrak{h}f - T_i\mathfrak{h}f$ is defined componentwise. Since 
$T_i \mathfrak{h}_i f = \mathfrak{h}_i f$, we obtain
\begin{align}
|\mathfrak{h}f - T_i\mathfrak{h}f|^2 
 & = 
\sum_{j \ne i}\, \frac{1}{4}\, \big(\lVert f - T_jf \rVert_{j,\infty}
 - \lVert T_i f - T_{ij}f \rVert_{j,\infty} \big)^2\notag\\
 & \le 
\sum_{j \ne i}\, \frac{1}{4}\, 
\lVert f - T_j f - T_i f + T_{ij} f \rVert_{j,\infty}^2,\label{Schritt2}
\end{align}
where the last inequality follows from the reverse triangular inequality again 
(for the pseudo-norm $\lVert \cdot \rVert_{j, \infty}$).
Combining \eqref{Schritt1} and \eqref{Schritt2} yields
\begin{align*}
(\mathfrak{h}_i|\mathfrak{h}f|)^2 
 & \le 
\big\lVert \frac{1}{16}\, \sum_{j \ne i}\,
\lVert f - T_j f - T_i f + T_{ij} f \rVert_{j,\infty}^2 \big\rVert_{i,\infty}\\
 & \le 
\frac{1}{16}\, \sum_{j \ne i}\,
\lVert f - T_j f - T_i f + T_{ij} f \rVert_{i,j,\infty}^2.
\end{align*}
Summing over $i = 1, \ldots, n$ we arrive at the result in the case $d=2$.

For $d \ge 3$, note that
$T_i \mathfrak{h}_{i_1 \ldots i_{d-1}} f = \mathfrak{h}_{i_1 \ldots i_{d-1}} f$
whenever $i \in \{i_1, \ldots, i_{d-1}\}$.
The claim then follows in the same way as above.
\end{proof}

\vskip2mm
Corresponding results in the setting of differentiable functions 
(see Lemma \ref{itGradHess}) suggest to replace the Hilbert--Schmidt norms in 
Lemma \ref{itGradHessindep} by operator type norms \eqref{Operatornorm}. 
In Boucheron, Bousquet, Lugosi and Massart \cite{B-B-L-M}, Theorem 14, 
iterations of \eqref{BBLM1} are sketched to study 
applications for Rademacher chaos type functions. 
Unfortunately, working out the arguments 
in the proof of Theorem 14 we seemed to need Hilbert--Schmidt instead of 
operator norms. Already in second order statistics of Rademacher variables 
like $\sum_{i=1}^{n} X_iX_{i+1}$ (setting $X_{n+1} = X_1$), an analogue of 
\eqref{Rekursion} for operator norms cannot be true. Similar remarks hold
for any of the difference operators introduced in Definition \ref{DiffOp}
(cf. Remark \ref{DiskrDiff}).

Our results will follow from certain moment inequalities for functions 
of independent random variables. In \cite{B-B-L-M}, cf. Theorem 2,
 the following moment bounds are shown
$$
\lVert (f - \mathbb{E}f)_+ \rVert_p \le 
\sqrt{2\kappa p}\, \lVert V^+(f) \rVert_p, \qquad
\lVert (f - \mathbb{E}f)_- \rVert_p \le 
\sqrt{2\kappa p}\, \lVert V^-(f) \rVert_p
$$
in terms of the conditional expectations 
$$
V^+(f) = \mathbb{E}\, \Big( \sum_{i=1}^{n} (f - T_if)_+^2 \big| X \Big) \qquad
V^-(f) = \mathbb{E}\, \Big( \sum_{i=1}^{n} (f - T_if)_-^2 \big| X \Big),
$$
where $\kappa = \frac{\sqrt{e}}{2\,(\sqrt{e} - 1)} < 1.271$. 
Note that, in our notations according to Definition \ref{DiffOp}, 
$$
V^+(f) = 2\, |\mathfrak{d}^+ f|^2 \qquad {\rm and} \qquad 
V^-(f) = 2\, |\mathfrak{d}^- f|^2.
$$ 
For iterating these inequalities however, we had to bypass the problem that
$\mathfrak{d}_{ii} = \mathfrak{d}_i$ respectively 
$\mathfrak{d}^+_{ii} = \mathfrak{d}^+_i$ (up to constant), which would introduce 
additional lower order differences on the right-hand side of \eqref{Rekursion}. 
This motivated us to introduce the following related quantities adapted to 
the framework of $L^\infty$-bounds. For $i = 1, \ldots, n$, introduce
\begin{equation}
\label{h+}
\mathfrak{h}^+_if(X) = \frac{1}{2}\, \lVert (f(X) - T_if(X))_+ \rVert_{i, \infty},
\qquad \mathfrak{h}^+f = (\mathfrak{h}^+_1f, \ldots, \mathfrak{h}^+_nf),
\end{equation}
\begin{equation}
\label{h-}
\mathfrak{h}^-_if(X) = \frac{1}{2}\, \lVert (f(X) - T_if(X))_- \rVert_{i, \infty},
\qquad \mathfrak{h}^-f = (\mathfrak{h}^-_1f, \ldots, \mathfrak{h}^-_nf),
\end{equation}
which are clearly difference operators in the sense of 
Conditions \ref{DiskrDiffallgem}. Using the relations
$V^+(f) \le 4\, |\mathfrak{h}^+f|^2$ and $V^-(f) \le 4\, |\mathfrak{h}^-f|^2$,
we get from the \cite{B-B-L-M}-result the following somewhat weaker bounds
in terms of the $L^p$-norms as in \eqref{LpNorm1}.

\vskip5mm
\begin{satz}
\label{BBLM}
With the same constant $\kappa = \frac{\sqrt{e}}{2\,(\sqrt{e} - 1)}$,
for any real $p \ge 2$,
\begin{equation}
\label{BBLM1}
\lVert (f - \mathbb{E}f)_+ \rVert_p \le 
\sqrt{8\,\kappa p}\, \lVert \mathfrak{h}^+f \rVert_p,
\end{equation}
\begin{equation}
\label{BBLM2}
\lVert (f - \mathbb{E}f)_- \rVert_p \le \sqrt{8\,\kappa p}\, 
\lVert \mathfrak{h}^-f \rVert_p.
\end{equation}
\end{satz}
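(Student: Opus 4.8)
The plan is to obtain \eqref{BBLM1}--\eqref{BBLM2} as an immediate consequence of the moment inequalities of \cite{B-B-L-M} recalled above, simply by replacing the conditional proxies $V^\pm(f)$ by the pointwise upper bounds $4\,|\mathfrak{h}^\pm f|^2$ announced before the statement. So the whole proof reduces to (i) verifying those two pointwise bounds and (ii) substituting.

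First I would check the pointwise domination $V^+(f) \le 4\,|\mathfrak{h}^+ f|^2$, and symmetrically $V^-(f) \le 4\,|\mathfrak{h}^- f|^2$. Fix $i$ and a value of $X$. Since $T_i f(X)$ involves only the copy $\bar X_i$ among $\bar X_1,\ldots,\bar X_n$, the $i$-th summand of $V^+(f)$ equals the average over $\bar X_i$ of $(f(X)-T_i f(X))_+^2$, hence is at most $\sup_{\bar X_i}(f(X)-T_i f(X))_+^2$; enlarging this supremum so that it also ranges over $X_i$ — which leaves the value unchanged, since it then no longer depends on $X_i$ — and using \eqref{h+}, we get
$$\mathbb{E}\big[\,(f - T_i f)_+^2 \mid X\,\big] \;\le\; \big\lVert (f - T_i f)_+ \big\rVert_{i,\infty}^2 \;=\; \big(2\,\mathfrak{h}^+_i f(X)\big)^2 .$$
Summing over $i=1,\ldots,n$ gives $V^+(f) \le 4\sum_{i=1}^n (\mathfrak{h}^+_i f)^2 = 4\,|\mathfrak{h}^+ f|^2$ everywhere, and the same computation with $(\cdot)_-$ in place of $(\cdot)_+$ yields $V^-(f) \le 4\,|\mathfrak{h}^- f|^2$.

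Next I would feed these bounds into the \cite{B-B-L-M} moment inequality, which (for $p\ge 2$, with $\kappa$ as above) controls $\lVert (f-\mathbb{E}f)_+\rVert_p$ by $\sqrt{2\kappa p}$ times the $L^p$-norm of $\sqrt{V^+(f)}$. Since $\sqrt{V^+(f)}\le 2\,|\mathfrak{h}^+ f|$ pointwise and $g\mapsto\lVert g\rVert_p$ is monotone on nonnegative functions (notation as in \eqref{LpNorm1}), we obtain
$$\lVert (f - \mathbb{E}f)_+ \rVert_p \;\le\; \sqrt{2\kappa p}\,\big\lVert \sqrt{V^+(f)}\,\big\rVert_p \;\le\; 2\sqrt{2\kappa p}\;\lVert \mathfrak{h}^+ f \rVert_p \;=\; \sqrt{8\kappa p}\;\lVert \mathfrak{h}^+ f \rVert_p ,$$
which is \eqref{BBLM1}; \eqref{BBLM2} follows in the same way from the bound on $V^-(f)$ and the companion inequality for $(f-\mathbb{E}f)_-$. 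The restriction $p\ge2$ and the value of $\kappa$ are inherited verbatim from \cite{B-B-L-M}.

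There is no real obstacle here — the statement is a direct corollary. The only point deserving a line of justification is the very first step: replacing the conditional expectation $\mathbb{E}[(\cdot)_+^2\mid X]$ by the $L^\infty$-norm over $(X_i,\bar X_i)$ is legitimate, being just ``an average is bounded by a supremum'' together with the harmless enlargement of that supremum, and this relaxation does not spoil the numerical constant — the extra factor $2$ it costs is exactly the difference between $\sqrt{2\kappa p}$ and $\sqrt{8\kappa p}$. (This is precisely why the resulting bounds are only ``somewhat weaker'' than those of \cite{B-B-L-M}; the payoff is that $\mathfrak{h}^\pm$ are of pure $L^\infty$ type and iterate cleanly, whereas for $\mathfrak{d}^\pm$ the identity $\mathfrak{d}^+_{ii}=\mathfrak{d}^+_i$ up to a constant would re-introduce lower-order differences in the iteration.)
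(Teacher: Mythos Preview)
Your derivation is correct and is exactly the route the paper sketches in the paragraph immediately preceding the theorem: the bounds are introduced there as direct consequences of the \cite{B-B-L-M} moment inequalities via the pointwise estimates $V^\pm(f)\le 4\,|\mathfrak{h}^\pm f|^2$, which you verify carefully.

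The paper's \emph{labeled} proof, however, takes a different, self-contained route that does not invoke \cite{B-B-L-M} as a black box. It first shows $\lVert f_+\rVert_2 \le 2\,\lVert\mathfrak{h}^+f\rVert_2$ from the Efron--Stein inequality, then establishes the moment recursion
\[
\lVert f_+\rVert_p^p \,\le\, \lVert f_+\rVert_{p-1}^p + 4(p-1)\,\lVert\mathfrak{h}^+f\rVert_p^2\,\lVert f_+\rVert_p^{p-2}
\]
(via a tensorization of the functional $g\mapsto \mathbb{E}|g|^q - (\mathbb{E}|g|)^q$ for $q=p/(p-1)$), and finally runs an induction on $\lfloor p\rfloor$ with the $p$-dependent constants $\kappa_p=\tfrac12\big(1-(1-1/p)^{p/2}\big)^{-1}\uparrow\kappa$. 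Your argument is shorter and entirely legitimate once the \cite{B-B-L-M} result has been quoted; the paper's version buys self-containment and exposes the recursion-plus-tensorization mechanism that underlies the later iteration steps.

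One small wording issue: when you enlarge the supremum to also range over $X_i$, the value can strictly increase (since $\sup_{\bar X_i}(f-T_if)_+^2$ still depends on $X_i$ through $f(X)$), so ``leaves the value unchanged'' should read ``can only increase it''. The inequality you need is unaffected.
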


\vskip2mm
For the reader's convenience, let us give a self-contained proof of 
Theorem \ref{BBLM}. It is sufficient to derive \eqref{BBLM1}, 
since \eqref{BBLM2} follows from \eqref{BBLM1} by considering $-f$. 
The key step are the following two lemmas.

\vskip5mm
\begin{lemma}
Assume $\mathbb{E} f = 0$. Then,
\begin{equation}
\label{Le1}
\lVert f \rVert_2 \le \sqrt{2}\, \lVert\mathfrak{h}f\rVert_2,
\end{equation}
\begin{equation}
\label{Le2}
\lVert f_+ \rVert_2 \le 2\, \lVert\mathfrak{h}^+f\rVert_2.
\end{equation}
\end{lemma}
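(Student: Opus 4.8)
The plan is to deduce both bounds from the classical Efron--Stein inequality, in the form \eqref{Efron-SteinwDiffOp}, by comparing the operators $\mathfrak{d}$ and $\mathfrak{d}^+$ with their $L^\infty$-counterparts $\mathfrak{h}$ and $\mathfrak{h}^+$ coordinatewise.

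The elementary observation is that for each $i$ a conditional second moment is dominated by the corresponding conditional supremum: since $\bar{\mathbb{E}}_i$ is an average over $\bar X_i$, one has $\bar{\mathbb{E}}_i(f - T_i f)^2 \le \lVert f - T_i f\rVert_{i,\infty}^2$, and likewise with $(f - T_i f)_+$ in place of $(f - T_i f)$. Recalling the definitions, this reads $\mathfrak{d}_i f \le \sqrt{2}\,\mathfrak{h}_i f$ and $\mathfrak{d}^+_i f \le \sqrt{2}\,\mathfrak{h}^+_i f$ for every $i$, whence $\mathbb{E}\,|\mathfrak{d} f|^2 \le 2\,\mathbb{E}\,|\mathfrak{h} f|^2$ and $\mathbb{E}\,|\mathfrak{d}^+ f|^2 \le 2\,\mathbb{E}\,|\mathfrak{h}^+ f|^2$. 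Since $\mathbb{E} f = 0$ gives $\lVert f\rVert_2^2 = \mathrm{Var}\, f$, the bound $\mathrm{Var}\, f \le \mathbb{E}\,|\mathfrak{d} f|^2$ from \eqref{Efron-SteinwDiffOp} yields $\lVert f\rVert_2^2 \le 2\,\lVert\mathfrak{h} f\rVert_2^2$, which is \eqref{Le1}. Similarly $\mathrm{Var}\, f \le 2\,\mathbb{E}\,|\mathfrak{d}^+ f|^2$ from \eqref{Efron-SteinwDiffOp} gives $\lVert f\rVert_2^2 \le 4\,\lVert\mathfrak{h}^+ f\rVert_2^2$; as $f_+^2 \le f^2$ pointwise, $\lVert f_+\rVert_2 \le \lVert f\rVert_2 \le 2\,\lVert\mathfrak{h}^+ f\rVert_2$, which is \eqref{Le2}.

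If one prefers to keep the argument self-contained, one can instead invoke only the classical form \eqref{Efron-Stein}: expanding the square and using that, with the coordinates $X_j$ ($j\neq i$) frozen, $f$ depends only on $X_i$ while $T_i f$ depends only on $\bar X_i$, one gets $\mathrm{Var}_i f = \tfrac12\,\mathbb{E}_i\bar{\mathbb{E}}_i(f - T_i f)^2$; and, since swapping the exchangeable pair $(X_i,\bar X_i)$ interchanges $(f - T_i f)_+$ with $(f - T_i f)_-$, also $\mathrm{Var}_i f = \mathbb{E}_i\bar{\mathbb{E}}_i(f - T_i f)_+^2$. Combining the latter with \eqref{Efron-Stein} and the crude bound $\bar{\mathbb{E}}_i(f - T_i f)_+^2 \le \lVert (f - T_i f)_+\rVert_{i,\infty}^2 = 4\,(\mathfrak{h}^+_i f)^2$ again gives $\mathrm{Var}\, f \le 4\,\mathbb{E}\,|\mathfrak{h}^+ f|^2$, and analogously $\mathrm{Var}\, f \le 2\,\mathbb{E}\,|\mathfrak{h} f|^2$ for the unsigned differences. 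There is no real obstacle in this lemma; the only mildly delicate point is the symmetrization identity for $\mathrm{Var}_i f$, which rests on the exchangeability of $X_i$ and $\bar X_i$, and the passage from a conditional $L^2$-average to the $L^\infty$-pseudonorm $\lVert\cdot\rVert_{i,\infty}$, which is lossy but harmless and is precisely the price of working with the sup-type operators $\mathfrak{h}$, $\mathfrak{h}^{\pm}$ rather than with $\mathfrak{d}$, $\mathfrak{d}^{\pm}$.
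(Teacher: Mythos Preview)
Your proof is correct and follows essentially the same route as the paper: apply the Efron--Stein inequality in the form \eqref{Efron-SteinwDiffOp} and then use the pointwise comparisons $|\mathfrak{d} f|^2 \le 2\,|\mathfrak{h} f|^2$ and $|\mathfrak{d}^+ f|^2 \le 2\,|\mathfrak{h}^+ f|^2$ (the paper refers to Remark~\ref{DiskrDiff}\,(v) for these). The only cosmetic addition is that you make the trivial step $\lVert f_+\rVert_2 \le \lVert f\rVert_2$ explicit, which the paper leaves implicit.
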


\vskip2mm
\begin{proof}
By the Efron--Stein inequality \eqref{Efron-Stein},
$\mathbb{E}f^2 \le \mathbb{E}\,|\mathfrak{d}f|^2$ and
$\mathbb{E}f^2 \le 2\, \mathbb{E}\,|\mathfrak{d}^+f|^2$, while
$|\mathfrak{d}^+ f|^2 \le 2\,|\mathfrak{h}^+ f|^2$ and 
$|\mathfrak{d} f|^2 \le 2\,|\mathfrak{h} f|^2$ (cf. Remark \ref{DiskrDiff} $(v)$).
\end{proof}

\vskip2mm
The next lemma provides a moment recursion similarly to 
\cite{B-B-L-M}, {Lemma 3}.

\vskip5mm
\begin{lemma}
For any real $p \ge 2$,
\begin{equation}
\label{Rek}
\lVert f_+ \rVert_p^p \, \le \, 
\lVert f_+ \rVert_{p-1}^p + 4\, (p-1)\, \lVert \mathfrak{h}^+ f \rVert_p^2\, 
\lVert f_+ \rVert^{p-2}_p.
\end{equation}
\end{lemma}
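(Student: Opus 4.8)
The plan is to prove the moment recursion \eqref{Rek} by the standard device of writing $\lVert f_+\rVert_p^p = \mathbb{E}\,[f_+\cdot f_+^{p-1}]$, centering the second factor, and then applying the first-order estimate \eqref{Le2} to the function $g = f_+^{p-1}$ (or rather to $f$ paired against it) together with a pointwise bound on $\mathfrak{h}^+$ of a power. Concretely, since $\mathbb{E} f = 0$ we may write
\begin{align*}
\lVert f_+\rVert_p^p \, = \, \mathbb{E}\,\big[f\, f_+^{p-1}\big] \, = \, \mathbb{E}\,\big[f\,\big(f_+^{p-1} - \mathbb{E} f_+^{p-1}\big)\big] \, \le \, \lVert f\rVert_2 \cdot \big\lVert f_+^{p-1} - \mathbb{E} f_+^{p-1}\big\rVert_2,
\end{align*}
wait—this uses $\lVert f\rVert_2$, which we do not control directly; instead I would follow the route of \cite{B-B-L-M}, Lemma 3, and estimate the increment $f_+^{p-1} - (T_i f)_+^{p-1}$ pointwise. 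The cleaner approach: bound $\lVert f_+\rVert_p^p - \lVert f_+\rVert_{p-1}^p$ by introducing the telescoping identity $f_+^p - f_+^{p-1} = f_+^{p-1}(f_+ - 1)$ is \emph{not} what is needed either; rather, the inequality $\lVert f_+\rVert_{p-1}^{p} \le \lVert f_+\rVert_p^{p-2}\,\lVert f_+\rVert_{p-1}^{2}$ by H\"older shows the recursion will follow once one controls $\lVert f_+\rVert_p^p - \lVert f_+\rVert_{p-1}^{p-1}\lVert f_+\rVert_p$, or more precisely the ``entropy-like'' difference. Let me restate the genuine plan.

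First I would apply \eqref{Le2} (the $L^2$ bound $\lVert g_+\rVert_2 \le 2\,\lVert \mathfrak{h}^+ g\rVert_2$, valid for any centered $g$) to the function $g = f - \mathbb{E} f = f$; but to extract the $p$-th moment I apply it instead to a suitable modification. Following \cite{B-B-L-M}, Lemma 3, the key is the elementary inequality: for $a, b \ge 0$ and $p \ge 2$,
$$
a^p - b^p \, \le \, p\,(a-b)\,a^{p-1} \qquad \text{and hence} \qquad (a-b)_+\, a^{p-1} \ \text{controls} \ a^p - b\,a^{p-1}.
$$
Applying this with $a = f_+(X)$ and $b = (T_i f)_+(X)$, summing over $i$, and taking $\bar{\mathbb{E}}_i$-expectations, one relates $\sum_i \bar{\mathbb{E}}_i\big((f_+^{p-1})(f - T_i f)_+\big)$ to $|\mathfrak{h}^+ f|^2$ via Cauchy--Schwarz in the $i$-sum: $\sum_i (f - T_i f)_+\cdot w_i \le \big(\sum_i (f-T_if)_+^2\big)^{1/2}\big(\sum_i w_i^2\big)^{1/2}$, with $w_i$ chosen from the increments of $f_+^{p-1}$. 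The $L^\infty$-bound defining $\mathfrak{h}^+$ then lets us pull $\lVert \mathfrak{h}^+ f\rVert_p^2$ out, and a H\"older step in the remaining factor produces the $\lVert f_+\rVert_p^{p-2}$ term; the $(p-1)$ prefactor comes from the mean-value/convexity estimate on $x \mapsto x^{p-1}$.

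The concrete step sequence I would carry out: (1) write $\lVert f_+\rVert_p^p = \mathbb{E}[f\cdot f_+^{p-1}]$ using $\mathbb{E} f = 0$ and $f\cdot f_+^{p-1} = f_+^p$; (2) symmetrize in the $i$-th coordinate using the independent copy, i.e. replace $f$ by $\frac12(f - T_i f)$ inside an $\bar{\mathbb{E}}_i$-expectation after summing a telescoped version over $i$ — here one must be careful to invoke the Efron--Stein/Hoeffding structure correctly, which is exactly where \eqref{Le1}--\eqref{Le2} enter, applied not to $f$ but to the pairing with $f_+^{p-1}$; (3) bound the resulting sum of products of increments by Cauchy--Schwarz in $i$, using $(a^{p-1} - b^{p-1})(a-b)_+ \le (p-1)\max(a,b)^{p-2}(a-b)_+^2$ for the one factor and the definition \eqref{h+} of $\mathfrak{h}^+_i$ (its $L^\infty$ nature) for the other; (4) apply H\"older with exponents $p/(p-2)$ and $p/2$ to separate $\lVert f_+\rVert_p^{p-2}$ from $\lVert \mathfrak{h}^+ f\rVert_p^2$; (5) absorb the leftover term into $\lVert f_+\rVert_{p-1}^p$ using the monotonicity $\lVert\cdot\rVert_{p-1} \le \lVert\cdot\rVert_p$ and a convexity bound on $t\mapsto t^p$.

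I expect the main obstacle to be step (2)–(3): getting the constant $4(p-1)$ exactly right rather than something larger, and in particular handling the $\max(a,b)^{p-2}$ versus $a^{p-2}$ discrepancy — in \cite{B-B-L-M} this is managed by a careful case split (whether $a \ge b$ or not) combined with the observation that the ``bad'' case contributes with the favorable sign, so only the increments that increase $f_+$ matter, which is precisely why $\mathfrak{h}^+$ (the positive-part difference) rather than $\mathfrak{h}$ appears. A secondary technical point is the interchange of the $L^\infty$-norm in $(X_i, \bar X_i)$ with the $\bar{\mathbb{E}}_i$-integration, which is legitimate because $\lVert\cdot\rVert_{i,\infty}$ dominates $\bar{\mathbb{E}}_i$; one has to make sure the pointwise bound $V^+(f) \le 4|\mathfrak{h}^+ f|^2$ is applied at the right stage. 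Everything else — the H\"older bookkeeping in steps (4)–(5) — is routine.
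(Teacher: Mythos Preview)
Your proposal has the right ingredients for the case $n=1$ (symmetrization via the independent copy, the convexity bound $(a^{p-1}-b^{p-1})(a-b)_+ \le (p-1)a^{p-2}(a-b)_+^2$ for $a\ge b\ge 0$, and H\"older with exponents $p/2$ and $p/(p-2)$), and these match what the paper does. Two genuine gaps, however.

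\textbf{The reduction to one coordinate is not Efron--Stein.} In step (2) you say the passage from $n$ coordinates to one is ``exactly where \eqref{Le1}--\eqref{Le2} enter'', i.e.\ via the Efron--Stein inequality applied to the pairing with $f_+^{p-1}$. That is not what is needed, and it is not what the paper does. The paper instead invokes the \emph{tensorization} (subadditivity) of the functional $L(g)=\mathbb{E}\,|g|^q - (\mathbb{E}\,|g|)^q$ for $q=p/(p-1)\in(1,2]$: applying it to $g=f_+^{p-1}$ gives
\[
\lVert f_+\rVert_p^p - \lVert f_+\rVert_{p-1}^p \;\le\; \mathbb{E}\sum_{i=1}^n \Big(\mathbb{E}_i f_+^p - (\mathbb{E}_i f_+^{p-1})^{p/(p-1)}\Big),
\]
and \emph{then} each summand is bounded by the $n=1$ argument conditionally on the other coordinates. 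This subadditivity is a separate, nontrivial convexity fact (proved in the paper via a variational formula for $L$); it is not a consequence of Efron--Stein, and a ``telescoped sum over $i$'' does not produce it. Without this step your scheme has no mechanism to pass from the one-coordinate increment estimate to the full inequality when $n\ge 2$.

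\textbf{The $\lVert f_+\rVert_{p-1}^p$ term is not an afterthought.} Your step (5) proposes to ``absorb the leftover term into $\lVert f_+\rVert_{p-1}^p$ using monotonicity and a convexity bound''. In the paper's argument this term does not arise by absorption: it comes directly from the identity
\[
\tfrac12\,\mathbb{E}\big[(f_+^{p-1}-Tf_+^{p-1})(f_+ - Tf_+)\big] \;=\; \lVert f_+\rVert_p^p - \lVert f_+\rVert_{p-1}^{p-1}\,\lVert f_+\rVert_1,
\]
together with $\lVert f_+\rVert_{p-1}^{p-1}\lVert f_+\rVert_1 \le \lVert f_+\rVert_{p-1}^p$. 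This is what makes the $n=1$ case close cleanly with the exact constant $4(p-1)$; if you try to manufacture the $\lVert f_+\rVert_{p-1}^p$ term at the end by monotonicity alone, you will not recover the stated inequality. (Incidentally, the lemma does not assume $\mathbb{E} f = 0$, so your step (1) invocation of that hypothesis is spurious; the identity $f\,f_+^{p-1}=f_+^p$ holds unconditionally.)
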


\vskip2mm
\begin{proof}
First assume $n=1$, i.\,e. $f = f(X)$ for a random variable $X$ and 
$Tf = f(\bar{X})$, where $\bar{X}$ is an independent copy of $X$.
Using the notation $f_+^{p-1} \equiv (f_+)^{p-1}$, we have
$$
\frac{1}{2}\, \mathbb{E}\, 
[(f_+^{p-1} - Tf_+^{p-1})(f_+ - Tf_+)] - \lVert f_+ \rVert_p^p
  \, = \,
- \lVert f_+ \rVert_{p-1}^{p-1}\, \lVert f_+ \rVert_1 
  \, \ge \, 
- \lVert f_+ \rVert_{p-1}^p.
$$
Thus, by symmetry in $X$ and $\bar{X}$, and since
$(f - T f)_+ \le 2\,\mathfrak{h}^+ f$,
\begin{align*}
\lVert f_+ \rVert_p^p 
  & \le 
\lVert f_+ \rVert_{p-1}^p + \mathbb{E}\, [(f_+^{p-1} - Tf_+^{p-1})_+\,(f_+ - Tf_+)] \\
  & \leq
\lVert f_+ \rVert_{p-1}^p + (p-1)\, \mathbb{E}\, \big[(f-Tf)_+^2 \,f_+^{p-2}\big].	
\end{align*}
Using H\"older's inequality, the last expectation may be bounded by
\begin{align}
\label{n=1}
\begin{split}
4\,\mathbb{E}\, \big[\,|\mathfrak{h}^+ f|^2 f_+^{p-2}\big]
 \, \le \,
4\,\lVert |\, \mathfrak{h}^+ f|^2 \rVert_{p/2}\, \lVert f_+ \rVert^{p-2}_p
 \,  = \,
4\,\lVert \mathfrak{h}^+f\, \rVert_p^2\ \lVert f_+ \rVert^{p-2}_p.
\end{split}
\end{align}
This completes the proof in case $n=1$.

For $n \ge 1$, we use a tensorization argument:
For any $g \in L^q$, $q \in (1,2]$,
\begin{equation}
\label{Tensorisierung}
\mathbb{E}\, |g|^q - \big(\mathbb{E}\, |g|\big)^q \le 
\mathbb{E}\, \sum_{i=1}^{n} \Big(\mathbb{E}_i\, |g|^q - 
\big(\mathbb{E}_i\, |g|\big)^q\Big),
\end{equation}
where $\mathbb{E}_i$ denotes expectation with respect to $X_i$. 
Applying this inequality to $g = f_+^{p-1}$ with $q = p/(p-1)$,
similarly to the case of $n=1$ we obtain
\begin{align*}
\lVert f_+ \rVert_p^p - \lVert f_+ \rVert_{p-1}^p 
 & \, \le \,
\mathbb{E} \ \sum_{i=1}^{n} 
\Big( \mathbb{E}_i f_+^p - (\mathbb{E}_i f_+^{p-1})^{p/(p-1)} \Big)\\
 & \, \le \,
(p-1)\, \sum_{i=1}^{n}\, \mathbb{E}\ \mathbb{E}_i\, \bar{\mathbb{E}}_i\, 
\big[(f - T_i f)^2_+ f_+^{p-2}\big] \\
 & \, \le \,
4\,(p-1)\, \sum_{i=1}^{n}\, \mathbb{E}\, \big[|\mathfrak{h}_i^+ f|^2 f_+^{p-2}\big]
 \, = \,
4\,(p-1)\, \mathbb{E}\, \big[|\mathfrak{h}^+ f|^2 f_+^{p-2}\big].
\end{align*}
As in \eqref{n=1}, the last expectation is bounded by
$\lVert \mathfrak{h}^+f \rVert_p^2\, \lVert f_+ \rVert^{p-2}_p$
using H\"older's inequality, which gives the desired result.

It remains to prove \eqref{Tensorisierung}. Let us mention that
the tensorization of functionals
$L(g) = \mathbb{E}\,\Psi(g) - \Psi(\mathbb{E}\,g)$ was proposed in the mid
1990's by Bobkov, as explained in \cite{L1}, Proposition 4.1. This property
is actually equivalent to the convexity of $L$ in $g$, and can be explicitly
expressed in terms of $R$ (convexity of $\Psi$ and $-1/\Psi''$; see also
\cite{L-O}). 
For completeness of  exposition let us include here a direct argument
for the 	 
power functions $\Psi(x) = x^q$. By induction, it suffices 
to consider $n=2$; we use the representation
\begin{equation}
\label{Variational}
L(|g|)  = \sup_{h \in L^q} 
\big\{q\, \mathbb{E}\,\big[|g|(|h|^{q-1} - (\mathbb{E}\,|h|)^{q-1})\big] - 
(q-1)\, \big(\mathbb{E}\, |h|^q - (\mathbb{E}\, |h|)^q\big) \big\}.
\end{equation}
Indeed, by the arithmetic-geometric inequality, 
$
\frac{1}{q}\, \mathbb{E}\, |g|^q + \frac{q-1}{q}\, \mathbb{E}\, |h|^q \le 
\mathbb{E}\, |g|\, |h|^{q-1},
$
which we rewrite as
$$
\mathbb{E}\, |g|^q \, \le \,
q\, \mathbb{E}\, |g|\, |h|^{q-1} - (q-1)\, \mathbb{E}\, |h|^q.
$$
We may assume $\mathbb{E}|g| = 1$; therefore, subtracting 
$(\mathbb{E}\, |g|)^q = 1$ on both sides,
$$
L(|g|) \le q\, \mathbb{E}\, 
\big[|g|\, (|h|^{q-1} - (\mathbb{E}\, |h|)^{q-1})\big] - 
(q-1)\, \big(\mathbb{E}\, |h|^q
- (\mathbb{E}\, |h|)^q\big) + R(\mathbb{E}\, |h|)
$$
with $R(x) = q x^{q-1} - (q-1)\,x^q - 1$ for $x \ge 0$. Since
$R(x) \le 0$, while equality holds if $h = g$, we arrive at 
\eqref{Variational}.
By Fubini's theorem and applying \eqref{Variational}, we now get
\begin{align*}
 &
\mathbb{E}_2 \big[(\mathbb{E}_1 |g|)^q - (\mathbb{E} |g|)^q\big] 
 = 
\mathbb{E}_2 (\mathbb{E}_1 |g|)^q - (\mathbb{E}_2 \mathbb{E}_1 |g|)^q \\
 = \; &
\sup_{h(X_2) \in L^q} \big\{q\, \mathbb{E}_2\big[(\mathbb{E}_1|g|)(|h|^{q-1} - 
(\mathbb{E}_2|h|)^{q-1})\big] - 
(q-1) \big(\mathbb{E}_2 |h|^q - (\mathbb{E}_2 |h|)^q\big) \big\}\pagebreak[2]\\
 = \; &
\sup_{h(X_2) \in L^q} \big\{\mathbb{E}_1
\big[q\, (\mathbb{E}_2|g|)(|h|^{q-1} - (\mathbb{E}_2|h|)^{q-1}) - 
(q-1) \big(\mathbb{E}_2 |h|^q - (\mathbb{E}_2 |h|)^q\big)\big] \big\}\pagebreak[2]\\
 \le \; &
\mathbb{E}_1\big[\sup_{h(X_2) \in L^q} 
\big\{q\, \mathbb{E}_2\big[|g|(|h|^{q-1} - (\mathbb{E}_2|h|)^{q-1})\big] - 
(q-1) \big(\mathbb{E}_2 |h|^q - (\mathbb{E}_2 |h|)^q\big) \big\}\big]\\
 = \; &\mathbb{E}_1 \big[\mathbb{E}_2 |g|^q - (\mathbb{E}_2 |g|)^q\big].
\end{align*}
As a consequence, by Fubini's theorem again,
\begin{align*}
\mathbb{E}\, |g|^q - (\mathbb{E}\, |g|)^q
 & = 
\mathbb{E}_2 \big[\mathbb{E}_1|g|^q - (\mathbb{E}_1 |g|)^q\big] + \mathbb{E}_2 
\big[(\mathbb{E}_1|g|)^q - (\mathbb{E} |g|)^q\big]\\
 & \le 
\mathbb{E}_2 \big[\mathbb{E}_1|g|^q - (\mathbb{E}_1 |g|)^q\big] + 
\mathbb{E}_1 \big[\mathbb{E}_2 |g|^q - (\mathbb{E}_2 |g|)^q\big].
\end{align*}
\end{proof}

\vskip2mm
Following the arguments in \cite{B-B-L-M}, we may now prove 
Theorem \ref{BBLM}:

\vskip5mm
\begin{proof}[Proof of Theorem \ref{BBLM}]
It suffices to prove \eqref{BBLM1} assuming $\mathbb{E} f = 0$. 
To this end, by induction on $k$, we show that for all $k \in \mathbb{N}$ 
and all $p \in (k,k+1]$,

\begin{equation}
\label{Induction}
\lVert f_+ \rVert_p \le 
\sqrt{8\kappa_p\, p}\, \lVert \mathfrak{h}^+ f \rVert_{p \vee 2} \quad 
{\rm with} \ \
\kappa_p = \frac{1}{2}\, \Big(1 - \Big(1 - \frac{1}{p}\Big)^{p/2}\Big)^{-1}.
\end{equation}
These constants are strictly increasing in $p$, $\kappa_1 = 1/2$ and 
$\lim_{p \to \infty} \kappa_p = \kappa = \frac{\sqrt{e}}{2\,(\sqrt{e} - 1)}$.

For $k = 1$ and $p \in (1,2]$, by \eqref{Le2} and the fact that 
$\kappa_p p \ge 1/2$, we have
$$
\lVert f_+ \rVert_p \le \lVert f_+ \rVert_2 \le 
2\, \lVert\mathfrak{h}^+f\rVert_2 \le 
\sqrt{8\kappa_p\, p}\, \lVert \mathfrak{h}^+ f \rVert_2.
$$

To make an induction step, fix an integer $k > 1$ and assume that
\eqref{Induction} holds for all real $p \in [1,k]$. Now,
consider the values $p \in (k, k+1]$. Set
$$
x_p = \lVert f_+ \rVert_p^p\ 8^{-p/2} \kappa_p^{-p/2} p^{-p/2} 
\lVert \mathfrak{h}^+ f\, \rVert_{p \vee 2}^{-p},
$$
so that it suffices to prove that $x_p \le 1$. In terms of $x_p$, \eqref{Rek} 
implies that
\begin{align*}
       & 
x_p\ 8^{p/2} \kappa_p^{p/2} p^{p/2}\, \lVert \mathfrak{h}^+ f \rVert_p^p\\
\le \, & 
x_{p-1}^{p/(p-1)}\ 8^{p/2} \kappa_{p-1}^{p/2}\, (p-1)^{p/2}\, 
\lVert \mathfrak{h}^+ f \rVert_{p-1}^p\\
       & + 
4\, (p-1)\, \lVert \mathfrak{h}^+ f \rVert_p^2\ x_p^{1-2/p}\ 8^{p/2-1} 
\kappa_p^{p/2 - 1} p^{p/2 - 1}\, \lVert \mathfrak{h}^+ f\, \rVert_p^{p-2}\\
 	\le \, &
x_{p-1}^{p/(p-1)}\ 8^{p/2} \kappa_p^{p/2} (p-1)^{p/2}\, 
\lVert \mathfrak{h}^+ f \rVert_p^p + \frac{1}{2}\, x_p^{1-2/p}\ 
8^{p/2} \kappa_p^{p/2 - 1}\, p^{p/2}\, \lVert \mathfrak{h}^+ f \rVert_p^p.
\end{align*}
Here we have used the fact that $\kappa_{p-1} \le \kappa_p$. 
Simplifying and using that by induction, $x_{p-1} \le 1$, it follows that
$$
x_p \le x_{p-1}^{p/(p-1)}\Big(1 - \frac{1}{p}\Big)^{p/2} + 
\frac{1}{2 \kappa_p}\, x_p^{1-2/p} \le \Big(1 - \frac{1}{p}\Big)^{p/2} + 
\frac{1}{2 \kappa_p}\, x_p^{1-2/p}.
$$
Now note that the function
$$
u_p(x) =
\Big(1 - \frac{1}{p}\Big)^{p/2} + \frac{1}{2 \kappa_p}\, x^{1-2/p} - x
$$
is concave on $\mathbb{R}_+$ and positive at $x = 0$. Since $u_p(1) = 0$ 
and $u_p(x_p) \ge 0$, we may conclude that $x_p \le 1$.
\end{proof}

\vskip5mm
\begin{korollar}
\label{momentineq}
Given
$f = f(X_1, \ldots,\linebreak[3] X_n)$ in $L^\infty(\mathbb{P})$, 
for all $p \ge 2$,
\begin{equation}
\label{Ineq1}
\lVert f \rVert_p \le \lVert f \rVert_2 + \sqrt{32\, \kappa p}\, 
\lVert \mathfrak{h}f \rVert_p.
\end{equation}
If additionally $\mathbb{E} f = 0$,
\begin{equation}
\label{Ineq2}
\lVert f \rVert_p \le \sqrt{32\, \kappa p}\, \lVert \mathfrak{h}f \rVert_p.
\end{equation}
\end{korollar}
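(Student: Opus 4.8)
\emph{Proof proposal for Corollary \ref{momentineq}.}
The plan is to deduce both inequalities directly from Theorem \ref{BBLM}, after recording the elementary pointwise comparison between $\mathfrak{h}^{\pm}$ and $\mathfrak{h}$. First I would note that, since $(x)_{\pm} \le |x|$ for every real $x$, we have $\lVert (f - T_i f)_{\pm} \rVert_{i,\infty} \le \lVert f - T_i f \rVert_{i,\infty}$ and hence $\mathfrak{h}^{+}_i f \le \mathfrak{h}_i f$ and $\mathfrak{h}^{-}_i f \le \mathfrak{h}_i f$ for each $i = 1, \ldots, n$. As all these quantities are nonnegative, monotonicity of the Euclidean norm gives $|\mathfrak{h}^{+}f| \le |\mathfrak{h}f|$ and $|\mathfrak{h}^{-}f| \le |\mathfrak{h}f|$ pointwise on $\Omega$, and therefore $\lVert \mathfrak{h}^{\pm}f \rVert_p \le \lVert \mathfrak{h}f \rVert_p$ for every $p \in (0,\infty]$.

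Next, assume $\mathbb{E} f = 0$. Writing $|f| = f_{+} + f_{-}$ and applying the triangle inequality in $L^p$ together with \eqref{BBLM1}, \eqref{BBLM2} and the comparison just established, I obtain for $p \ge 2$
$$
\lVert f \rVert_p \le \lVert f_{+} \rVert_p + \lVert f_{-} \rVert_p \le \sqrt{8\kappa p}\,\big(\lVert \mathfrak{h}^{+}f \rVert_p + \lVert \mathfrak{h}^{-}f \rVert_p\big) \le 2\sqrt{8\kappa p}\,\lVert \mathfrak{h}f \rVert_p = \sqrt{32\kappa p}\,\lVert \mathfrak{h}f \rVert_p,
$$
which is exactly \eqref{Ineq2}; here I used $2\sqrt{8\kappa p} = \sqrt{32\kappa p}$.

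For a general $f \in L^\infty(\mathbb{P})$, I would apply \eqref{Ineq2} to the centered function $f - \mathbb{E} f$, observing that $\mathfrak{h}(f - \mathbb{E}f) = \mathfrak{h}f$ by the invariance property in Conditions \ref{DiskrDiffallgem}(ii), and then restore the centering by the triangle inequality and $|\mathbb{E}f| \le \lVert f \rVert_1 \le \lVert f \rVert_2$ (Jensen/Cauchy--Schwarz):
$$
\lVert f \rVert_p \le \lVert f - \mathbb{E}f \rVert_p + |\mathbb{E}f| \le \sqrt{32\kappa p}\,\lVert \mathfrak{h}f \rVert_p + \lVert f \rVert_2,
$$
giving \eqref{Ineq1}. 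I do not expect a genuine obstacle in this argument; the only places requiring (minor) care are the pointwise bound $\mathfrak{h}^{\pm}f \le \mathfrak{h}f$ and keeping track of the numerical constant when summing the two one-sided estimates of Theorem \ref{BBLM}.
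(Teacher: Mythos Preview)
Your argument is correct and essentially identical to the paper's own proof: both apply the one-sided bounds \eqref{BBLM1}--\eqref{BBLM2} of Theorem~\ref{BBLM} to the positive and negative parts of $f-\mathbb{E}f$, use the pointwise comparison $\mathfrak{h}^{\pm}f \le \mathfrak{h}f$ to combine them into $2\sqrt{8\kappa p}\,\lVert \mathfrak{h}f\rVert_p$, and then recover \eqref{Ineq1} from \eqref{Ineq2} via $|\mathbb{E}f|\le \lVert f\rVert_2$ and the triangle inequality. The only cosmetic difference is that you spell out the comparison $\mathfrak{h}^{\pm}_i f \le \mathfrak{h}_i f$ explicitly, whereas the paper leaves it implicit.
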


\vskip2mm
\begin{proof}
By Theorem \ref{BBLM},
\begin{align*}
\lVert f - \mathbb{E}f \rVert_p 
 & \le 
\lVert (f - \mathbb{E}f)_+ \rVert_p + \lVert (f - \mathbb{E}f)_- \rVert_p\\
 & \le 
\sqrt{8 \kappa p}\, \lVert \mathfrak{h}^+f \rVert_p + \sqrt{8 \kappa p}\, 
\lVert \mathfrak{h}^-f \rVert_p
 \ \le \
2 \sqrt{8 \kappa p}\, \lVert \mathfrak{h}f \rVert_p,
\end{align*}
which proves \eqref{Ineq2}. Moreover, by the triangle inequality,
\begin{align*}
\lVert f - \mathbb{E}f \rVert_p \, \ge \,
\lVert f \rVert_p - \lvert\mathbb{E} f\rvert \, \ge \,
\lVert f \rVert_p - \lVert f \rVert_2,
\end{align*}
so that we obtain \eqref{Ineq1}.
\end{proof}

We shall now prove Theorem \ref{allgem}. Recall that if the relation 
of the form
\begin{equation}
\label{subexp}
\lVert f \rVert_k \le \gamma k \qquad (k \in \mathbb{N})
\end{equation}
holds true with some constant $\gamma > 0$, then $f$ has sub-exponential 
tails, i.\,e. $\mathbb{E} e^{c|f|} \le 2$ for some constant $c = c(\gamma) > 0$,
e.\,g. $c = \frac{1}{2 \gamma e}$. Indeed, using $k! \ge (\frac{k}{e})^k$,
we have
$$
\mathbb{E} \exp(c|f|) = 1 + \sum_{k=1}^{\infty} c^k \frac{\mathbb{E}\, |f|^k}{k!} 
\le 1 + \sum_{k=1}^{\infty} (c \gamma)^k \frac{k^k}{k!}
\le 1 + \sum_{k=1}^{\infty} (c \gamma e)^k = 2.
$$

\vskip2mm
\begin{proof}[Proof of Theorem \ref{allgem}]
Put $A = \sqrt{32\,\kappa p}$.
Using \eqref{Ineq1} with $f$ replaced by 
$\lvert \mathfrak{h}^{(k-1)} f \rvert_\mathrm{HS}$ for $k = 1, \ldots, d$, 
and applying Lemma \ref{itGradHessindep}, we get
\begin{align*}
\lVert \mathfrak{h}^{(k-1)}f \rVert_{\mathrm{HS}, p} 
 & \le 
\lVert \mathfrak{h}^{(k-1)}f \rVert_{\mathrm{HS}, 2} + A\, 
\lVert \mathfrak{h} \lvert \mathfrak{h}^{(k-1)}f \rvert_\mathrm{HS} \rVert_p\\
 & \le 
\lVert \mathfrak{h}^{(k-1)}f \rVert_{\mathrm{HS}, 2} + A\, 
\lVert \mathfrak{h}^{(k)}f \rVert_{\mathrm{HS}, p}.
\end{align*}
Consequently, using \eqref{Ineq2} and then iterating \eqref{Ineq1}, we arrive at
\begin{equation}
\lVert f \rVert_p
 \le 
\sum_{k=1}^{d-1} A^k \,
\lVert \mathfrak{h}^{(k)}f \rVert_{\mathrm{HS}, 2} + 
A^d \, \lVert \mathfrak{h}^{(d)}f 
\rVert_{\mathrm{HS},p}.\label{byit}
\end{equation}
Now, since 
$\lVert \mathfrak{h}^{(k)}f \rVert_{\mathrm{HS}, 2} \le 1$ for
$k \leq d-1$ and 
$\lVert \mathfrak{h}^{(d)}f \rVert_{\mathrm{HS}, \infty} \le 1$ by assumption, 
we get
$$
\lVert f \rVert_p \le \sum_{k=1}^{d} A^k =
\frac{A^{d+1} - 1}{A - 1} - 1\le 
\Big(\frac{A}{A-1} A\Big)^d
$$
for all $p \ge 2$, where $A/(A-1) \le 1.12$. We now arrive at $1.12A \le \sqrt{Cp}$,
where the best constant corresponds to $p=2$, and then we find that $C < 52$. Hence,
we obtain the bound 
\begin{equation}
\label{Momenteiteriertallgem}
\lVert f \rVert_p \le (52\, p)^{d/2}, \qquad p \geq 2.
\end{equation}
As for $0 < p < 2$, one may find
$\lVert f \rVert_p \le \lVert f \rVert_2 \le (104)^{d/2}$,
and thus, for all $k \geq 1$,
$$
\lVert \lvert f \rvert^{2/d} \rVert_k =
\lVert f \rVert_{2k/d}^{2/d}\le \gamma k,
$$
as in \eqref{subexp}, with constant $\gamma = 104$.
\end{proof}

\vskip2mm
\begin{proof}[Proof of Proposition \ref{multPol}]
First note that since $X_i$'s are centered, we have $\alpha_0 = 0$, and
the Hoeffding decomposition of $f$ is given by the polynomials
$$
h_{i_1 \ldots i_d}(X_{i_1}, \ldots, X_{i_d}) = 
\alpha_{i_1 \ldots i_d}\, X_{i_1} \cdots X_{i_d} \qquad 
(i_1 < \ldots < i_d, \ \ d = 1, \ldots, n).
$$
It is now easily seen that for any $1 \le k \le d$ and
$1 \le j_1 \ne \ldots \ne j_k \le n$,
\begin{align*}
\mathfrak{h}_{j_1 \ldots j_k} f(X) 
  = \; &
\mathfrak{h} X_{j_1} \cdots \mathfrak{h} X_{j_k} \Big| 
\sum_{\substack{i_1 < \ldots < i_d \\ \ni j_1, \ldots, j_k}}
\alpha_{i_1 \ldots i_d} \prod_{\substack{\nu \in \{i_1, \ldots, i_d\} \\ 
\setminus \{j_1, \ldots, j_k\}}} X_\nu\\
       & + 
\sum_{\substack{i_1 < \ldots < i_{d+1} \\ 
\ni j_1, \ldots, j_k}} \alpha_{i_1 \ldots i_{d+1}} 
\prod_{\substack{\nu \in \{i_1, \ldots, i_{d+1}\} \\
\setminus \{j_1, \ldots, j_k\}}} X_\nu + \ldots \Big|.
\end{align*}
Here, $\mathfrak{h} X_{j_\nu}$ is understood according to \eqref{h} as 
the difference $\mathfrak{h} g$ for the function $g(X_{j_\nu}) = X_{j_\nu}$ 
(i.\,e. in dimension $n = 1$). Hence, for any $k = 0, \ldots, d$,
\begin{align*}
\lVert \mathfrak{h}^{(k)} f(X) \rVert_{\mathrm{HS}, 2}^2 
  = \; &
\sum_{i_1 < \ldots < i_d} \alpha_{i_1 \ldots i_d}^2 
\sum_{\substack{j_1 \ne \ldots \ne j_k \\ 
\in \{i_1,\ldots, i_d\}}} (\mathfrak{h} X_{j_1})^2 \cdots (\mathfrak{h} X_{j_k})^2\\
       & + 
\sum_{i_1 < \ldots < i_{d+1}} \alpha_{i_1 \ldots i_{d+1}}^2 
\sum_{\substack{j_1 \ne \ldots \ne j_k \\ 
\in \{i_1, \ldots, i_{d+1}\}}} 
(\mathfrak{h} X_{j_1})^2 \cdots (\mathfrak{h} X_{j_k})^2 + \ldots
\end{align*}
As a consequence, since $\mathfrak{h} X_i \ge 1$ for all $i = 1, \ldots, n$,
\begin{equation}
\label{normen}
\lVert f \rVert_{\mathrm{HS}, 2} \le 
\lVert \mathfrak{h}^{(1)} f(X) \rVert_{\mathrm{HS}, 2} \le 
\ldots \leq \lVert \mathfrak{h}^{(d)} f(X) \rVert_{\mathrm{HS}, 2} \le 
\lVert \mathfrak{h}^{(d)} f(X) \rVert_{\mathrm{HS}, \infty}.
\end{equation}
\end{proof}

\vskip2mm
Corollary \ref{U-statistics} can now be obtained along the lines of the 
proof of Theorem \ref{allgem}, when the random variables $X_1, \ldots, X_n$ 
are Rademacher variables.

\vskip2mm
\begin{proof}[Proof of Corollary \ref{U-statistics}]

Without loss of generality we assume $h$ to be symmetric (i.\,e. invariant 
under permutations). Hence $f$ can be rewritten as
$$
f(X_1, \ldots, X_n) = \frac{1}{\binom{n}{d}} 
\sum_{i_1 < \ldots < i_d} h(X_{i_1}, \ldots, X_{i_d}).
$$
Introduce a set of independent Rademacher variables 
$\varepsilon_1, \ldots, \varepsilon_n$ which are independent of the random 
variables $X_1, \ldots, X_n$ and consider
$$
f^\varepsilon(X, \varepsilon) = 
\frac{1}{\binom{n}{d}} \sum_{i_1 < \ldots < i_d} 
\varepsilon_{i_1} \cdots \varepsilon_{i_d}\, h(X_{i_1}, \ldots, X_{i_d}).
$$
Denote by $\mathfrak{h}_i^\varepsilon$, $\mathfrak{h}^{\varepsilon(d)}$ 
and by similar expressions differences of $f^\varepsilon$ with
respect to the Rademacher variables $\varepsilon_i$ conditionally on $X$.

Note that conditionally on $X$, $f^\varepsilon(X, \varepsilon)$ has Fourier--Walsh 
expansion consisting of the $d$-th order term only. Hence, we may use
\eqref{normen} with $\lVert \mathfrak{h}^{(d)} f(X) \rVert_{\mathrm{HS}, \infty}$ 
replaced by $\lVert \mathfrak{h}^{(d)} f(X) \rVert_{\mathrm{HS}, p}$ in
\eqref{byit}. Arguing as in \eqref{Momenteiteriertallgem}, conditionally 
on $X$ we get
$$
\mathbb{E}_\varepsilon\, \lvert f^\varepsilon(X, \varepsilon) \rvert^p
 \le 
(52\, p)^{pd/2} \,
\mathbb{E}_\varepsilon\, 
\lvert \mathfrak{h}^{\varepsilon(d)}f^\varepsilon(X, \varepsilon) 
\rvert_\mathrm{HS}^p
$$
for all $p \ge 2$. Hence, taking expectations with respect to $X$ on both sides, 
we have
\begin{equation}
\label{Schr1}
\mathbb{E}\, \lvert f^\varepsilon(X, \varepsilon) \rvert^p \le 
(52\, p)^{pd/2}\, \mathbb{E}\, \lvert \mathfrak{h}^{\varepsilon(d)}
f^\varepsilon(X, \varepsilon) \rvert_\mathrm{HS}^p.
\end{equation}

It follows from a result by de la Pe\~{n}a and Gin\'e \cite{D-G}, Theorem 3.5.3 
(also see Joly and Lugosi \cite{J-L}, Theorem 8) that
\begin{equation}
\label{Schr1a}
\mathbb{E}\, \lvert f(X) \rvert^p \le 
\tilde{c}^p\, \mathbb{E}\, \lvert f^\varepsilon(X, \varepsilon) \rvert^p
\end{equation}
with some constant $\tilde{c}$ depending on $d$ only. Moreover, 
for any $i_1 \ne \ldots \ne i_d$, it is not hard to see that
$$
\mathfrak{h}^\varepsilon_{i_1 \ldots i_d} f^\varepsilon(X, \varepsilon) = 
|h(X_{i_1}, \ldots, X_{i_d})|
$$
(cf. the proof of Proposition \ref{multPol} and note that 
$\mathfrak{h}\varepsilon_i = 1$). Consequently,
\begin{equation}
\label{Schr1b}
\lvert \mathfrak{h}^{\varepsilon(d)}f^\varepsilon(X, \varepsilon) 
\rvert_\mathrm{HS} \, \le \, C_d(f),
\end{equation}
where
$$
C_d = \frac{1}{\binom{n}{d}}\Big(\sum_{i_1 \ne \ldots \ne i_d} 
\lVert h(X_1, \ldots, X_d) \rVert_\infty^2 \Big)^{1/2}.
$$
Applying \eqref{Schr1a} and \eqref{Schr1b} on \eqref{Schr1} and taking 
$p$-th roots, we arrive at
$$
\lVert f \rVert_p \le \tilde{c} \,(52\, p)^{d/2}\, C_d(f).
$$

From here on, the proof is similar to the proof of Theorem \ref{allgem} 
if we normalize $f$ such that $C_d(f) \le 1$. However, it follows from 
the assumptions on $h$ that $C_d(f) \le \hat{c} n^{-d/2}$ for some 
numerical constant $\hat{c}$ depending on $d$ and $M$ only. Hence 
we arrive at the normalization $\hat{c}^{-1} n^{d/2} f$ which yields
Corollary \ref{U-statistics}.
\end{proof}

\vskip2mm
\begin{proof}[Proof of Corollary \ref{KorrTails}]
First note that, by Chebychev's inequality, for any $p \ge 1$
\begin{equation}\label{tailsexp}
\mathbb{P}(|f| \ge e\, \lVert f \rVert_p) \le e^{-p}.
\end{equation}
Moreover, if $p \ge 2$, it follows from \eqref{byit} that
$$
e\,\lVert f \rVert_p  \, \le \, e\,\Big(\sum_{k=1}^{d-1} (41p)^{k/2} \,
\lVert \mathfrak{h}^{(k)}f \rVert_{\mathrm{HS}, 2} + 
(41p)^{d/2} \, \lVert \mathfrak{h}^{(d)}f 
\rVert_{\mathrm{HS},\infty}\Big).
$$
Here we have used that $32\,\kappa < 41$.
Assuming $\eta_f(t) \ge 2 \cdot 41$, we therefore arrive that
$$
e\,\lVert f \rVert_{\eta_f(t)/41} \le e\,\Big(\sum_{k=1}^{d-1} t + t\Big)
= (de)\, t.
$$
Hence, applying \eqref{tailsexp} to $p = \eta_f(t)/41$ (if $p \ge 2$) yields
$$
\mu(|f| \ge (de) t) \, \le \, \mu(|f| \ge e\, \lVert f \rVert_{\eta_f(t)/41})
 \, \le \, \exp\{-\eta_f(t)/41\}.
$$
Using a trivial estimate in case of $p < 2$, we also obtain that
$$
\mu(|f| \ge (de) t) \le e^2 \exp\{-\eta_f(t)/41\}.
$$
The proof is now easily completed by rescaling $f$ and
using $\eta_{(de) f}(t) \ge \eta_f(t)/(de)^2$.
\end{proof}

\vskip2mm
\begin{proof}[Proof of Theorem \ref{allgemsup}]
First note that
$$
\mathfrak{h}_i \sup_{f \in \mathfrak{F}} |f(X)| \le \sup_{f \in \mathfrak{F}}
\mathfrak{h}_i f(X)
$$
by the reverse triangular inequality, and therefore 
(writing $\mathfrak{h}^{*} (\mathfrak{F})
\equiv \mathfrak{h}^{*(1)}(\mathfrak{F})$)
$$
|\mathfrak{h} \sup_{f \in \mathfrak{F}} |f(X)| | \le |\mathfrak{h}^{*} (\mathfrak{F})|.
$$
In a similar way, we may also prove an analogue of \eqref{Rekursion}, i.\,e.
$$
|\mathfrak{h}|\mathfrak{h}^{*(d-1)}(\mathfrak{F})|_\mathrm{HS}| \le 
|\mathfrak{h}^{*(d)}(\mathfrak{F})|_\mathrm{HS}.
$$
To see this, note that $\sup_{f \in \mathfrak{F}}\,\lVert \cdot\rVert_{j,\infty}$ 
is a pseudo-norm. In view of these elementary facts, the proof
of Theorem \ref{allgemsup} is now similar to the proof of Theorem~\ref{allgem}.
\end{proof}

\vskip2mm
\begin{proof}[Proof of Theorem \ref{proppartsum}]
The proof is obtained by calculating the differences of first and second order.
To start, note that for any $\nu = 1, \ldots, n$,
$$
(\mathfrak{h}_\nu S_f(X))^2 \, = \,
\frac{1}{4}\, \Big\lVert \sum_{i \ge \nu} 
\Big(f\Big(\sum_{j=1}^{i} X_j\Big) - 
f\Big(\sum_{j=1}^{i}T_\nu X_j\Big) \Big)\Big\rVert_\infty^2
 \, \le \, \lVert f \rVert_\infty^2(n-\nu+1)^2,
$$
and consequently
$$
|\mathfrak{h} S_f(X)|^2 \le \lVert f \rVert_\infty^2 \sum_{\nu=1}^{n}
(n-\nu+1)^2 = \frac{1}{6} n(n+1)(2n+1) \lVert f \rVert_\infty^2
\le C n^3 \lVert f \rVert_\infty^2
$$
for some constant $C > 0$.
Moreover, for any $\nu \ne \mu$, $(\mathfrak{h}_{\nu \mu} S_f(X))^2$
is given by
$$
\frac{1}{16}\, 
\Big\lVert\sum_{i \ge \nu \vee \mu}\Big(f(\sum_{j=1}^{i} X_j\Big) - 
f\Big(\sum_{j=1}^{i} T_\nu X_j\Big) - 
f\Big(\sum_{j=1}^{i} T_\mu X_j\Big) +
f\Big(\sum_{j=1}^{i} T_{\nu \mu} X_j\Big)\Big)\Big\rVert^2.
$$
By similar arguments as above, this expression
does not exceed $\lVert f \rVert_\infty^2(n-(\nu \vee \mu)+1)^2$, and therefore
$$
\lVert\mathfrak{h}^{(2)} S_f(X)\rVert_\mathrm{HS}^2 = 
\sum_{\nu \ne \mu} (\mathfrak{h}_{\nu \mu} S_f(X))^2 \le 
C n^4 \lVert f \rVert_\infty^2.
$$
Combining these arguments and applying Corollary \ref{KorrTails} completes the proof.
\end{proof}

\vskip5mm
\section{Higher order Efron--Stein inequality: Proofs}

Let us first collect some elementary facts about the difference operators 
introduced in Section 1. As before, assume that $X = (X_1,\dots,X_n)$ 
has independent components.

\vskip4mm
\begin{bemerkung}
\label{DiskrDiff}
\begin{enumerate}[(i)]
Let $i = 1,\dots,n$.
\item 
If $\varepsilon = (\varepsilon_1, \ldots, \varepsilon_n)$ 
has independent Rademacher components, then
$
\mathfrak{D}_if(\varepsilon) = \frac{1}{2}\,
(f(\varepsilon)-f(\sigma_i\varepsilon)),
$
where
$\sigma_i\varepsilon = 
(\varepsilon_1, \ldots, -\varepsilon_i, \ldots, \varepsilon_n)$.
Moreover,
$$
\mathfrak{h}_i f(\varepsilon) = \mathfrak{v}_i f(\varepsilon) = 
\mathfrak{d}_i f(\varepsilon) = |\mathfrak{D}_if(\varepsilon)|.
$$
\item 
If $f(X) \in L^2(\mathbb{P})$, then
$$
(\mathfrak{v}_i f(X))^2 = \mathbb{E}_i (\mathfrak{D}_i f(X))^2\qquad \text{and}\qquad 
(\mathfrak{v}_i f(X))^2 = \mathbb{E}_i (\mathfrak{d}_i f(X))^2.
$$
In particular, we immediately obtain \eqref{L2normengleich}, where 
the identities involving $\mathfrak{d}^\pm f$ follow from symmetry and Fubini's 
theorem.

\item 
Let $f(X) \in L^2(\mathbb{P})$. Then, by independence, we can rewrite 
$\mathfrak{d}_if(X)$ as
\begin{align}
\label{d2}
\mathfrak{d}_if(X) 
  & = 
\Big(\frac{1}{2}\,\big((f(X) - \mathbb{E}_i\, f(X))^2 + 
\mathbb{E}_i\, (f(X) - \mathbb{E}_if(X))^2\big)\Big)^{1/2}\notag\\
	& = 
\Big(\frac{1}{2}\,\big((\mathfrak{D}_if(X))^2 + 
\mathbb{E}_i(\mathfrak{D}_if(X))^2\big)\Big)^{1/2}.
\end{align}

\item 
By induction over $n$, $f$ is bounded if and only if $|\mathfrak{D}f|$ 
is bounded. Using \eqref{d2}, the same holds for $|\mathfrak{d} f|$ and
$|\mathfrak{h} f|$ instead of $|\mathfrak{D}f|$.

\item 
We have $|\mathfrak{d}^+ f| \le |\mathfrak{d} f|$, 
$|\mathfrak{d}^-f| \le |\mathfrak{d} f|$, $|\mathfrak{d}^+ f| \le 
\sqrt{2}\,|\mathfrak{h}^+ f|$,
$|\mathfrak{d}^-f| \le \sqrt{2}\,|\mathfrak{h}^- f|$ and 
$|\mathfrak{d} f| \le \sqrt{2}\,|\mathfrak{h} f|$.
\end{enumerate}
\end{bemerkung}

\vskip2mm
The difference operator $\mathfrak{D}$ is closely related 
to the Hoeffding decomposition \eqref{Hoeffding}. Indeed, 
the representation \eqref{Hoeffding} 
follows by tensorizing the identity $\mathbb{E}_i + \mathfrak{D}_i = Id$,
so
\begin{equation}
\label{HT}
h_{i_1 \ldots i_k}(X_{i_1}, \ldots, X_{i_k}) = 
\Big(\prod_{j \notin \{i_1, \ldots i_k\}} 
\mathbb{E}_j \prod_{l \in \{i_1, \ldots i_k\}}
\mathfrak{D}_l\Big) f(X_1, \ldots, X_n).
\end{equation}

Many of the relations described in Remark \eqref{DiskrDiff} extend to 
higher order differences. In particular, for any $i_1 < \ldots < i_d$,
\begin{equation}
\label{relvDdd}
(\mathfrak{v}_{i_1 \ldots i_d} f(X))^2 = 
\mathbb{E}_{i_1 \ldots i_d}\, (\mathfrak{D}_{i_1 \ldots i_d} f(X))^2 = 
\mathbb{E}_{i_1 \ldots i_d}\, (\mathfrak{d}_{i_1 \ldots i_d} f(X))^2.
\end{equation}
Furthermore, similarly to \eqref{d2}, we may rewrite \eqref{dd} as
$$
\mathfrak{d}_{i_1 \ldots i_d}\,f(X) = 
\Big(\frac{1}{2^d}\,\big((\mathfrak{D}_{i_1 \ldots i_d}\,f(X))^2 + 
\sum_{k=1}^d \sum_{1 \leq s_1 < \ldots < s_k \leq d} 
\mathbb{E}_{i_{s_1} \ldots i_{s_k}}\,
(\mathfrak{D}_{i_1 \ldots i_d}f(X))^2\big)\Big)^{1/2}.
$$
\eqref{relvDdd} implies that we always have
$$
\mathbb{E}\, (\mathfrak{D}_{i_1 \ldots i_d} f)^2 
 = 
\mathbb{E}\, (\mathfrak{v}_{i_1 \ldots i_d} f)^2 
 = 
\mathbb{E}\, (\mathfrak{d}_{i_1 \ldots i_d} f)^2.
$$
Moreover, by the symmetry and Fubini's theorem,
$$
\mathbb{E}\, (\mathfrak{d}^+_{i_1 \ldots i_d} f)^2 
 = 
\mathbb{E}\, (\mathfrak{d}^-_{i_1 \ldots i_d} f)^2
 = 
\frac{1}{2}\, \mathbb{E}\, (\mathfrak{d}_{i_1 \ldots i_d} f)^2.
$$
In particular,
\begin{equation}
\label{L2normengleichd}
\mathbb{E}\, |\mathfrak{D}^{(d)} f|^2 
 = 
\mathbb{E}\, |\mathfrak{v}^{(d)} f|^2 
 = 
\mathbb{E}\, |\mathfrak{d}^{(d)} f|^2
 = 
2\, \mathbb{E}\, |\mathfrak{d}^{+(d)} f|^2
 = 
2\, \mathbb{E}\, |\mathfrak{d}^{-(d)} f|^2.
\end{equation}
Finally, as in Remark \ref{DiskrDiff} $(i)$, we may conclude that 
even the identity
$$
\mathfrak{h}_{i_1 \ldots i_d}f(\varepsilon) 
 = 
\mathfrak{v}_{i_1 \ldots i_d}f(\varepsilon) 
 = 
\mathfrak{d}_{i_1 \ldots i_d}f(\varepsilon) 
 = 
|\mathfrak{D}_{i_1 \ldots i_d}f(\varepsilon)|
$$
holds for independent Rademacher variables 
$(\varepsilon_1, \ldots, \varepsilon_n) = \varepsilon$.

The proof of Theorem \ref{HigherOrderEfronStein} is based on $L^2$-identities 
together with some kind of ``harmonic'' analysis arguments on the symmetric group. 
To this end, we shall need specific (higher order) operators $\mathfrak{L}_d$ 
we would call powers of ``Laplacians''. Here we make use of the difference 
operators $\mathfrak{D}_i$. That is, we set
\begin{equation}
\mathfrak{L}_d \ = 
\sum_{1 \le i_1 \ne i_2 \ne \ldots \ne i_d \le n} 
\mathfrak{D}_{i_1} \ldots \mathfrak{D}_{i_d}, \qquad d \in \mathbb{N}.
\label{Laplace}
\end{equation}
In case of $d = 1$ this just means summing over all $i = 1, \ldots, n$.

To motivate the notation of $\mathfrak{L}_d$, recall that for 
the discrete hypercube $\{\pm1 \}^n$, 
$\mathfrak{L}_1 = \sum_{i=1}^{n} \mathfrak{D}_i$ is the usual graph Laplacian. 
The higher order operators $\mathfrak{L}_d$ can be written as
polynomials in $\mathfrak{L}_1$ of total degree $d$. Note that 
$(\mathfrak{L}_1)^d$ can be 
expressed as a sum of $d$-th order
differences $\mathfrak{D}_{i_1 \ldots i_d}$. Hence it easily follows that
$\mathfrak{L}_d$ can be  
expressed in terms of $(\mathfrak{L}_1)^d$ by
removing all the differences in which some indexes appear more than once.
This is in accordance with our definition of the hyper-matrices
$\mathfrak{D}^{(d)}f$ (cf. the discussion of \eqref{Hessediskret}).
Relating the Hoeffding decomposition to the Laplacian $\mathfrak{L}_d$ 
yields the following result.

\vskip5mm
\begin{satz}
\label{diagonal}
Let $f = f(X_1, \ldots,\linebreak[3] X_n)$ be in $L^1(\mathbb{P})$
with Hoeffding decomposition $f = \sum_{k=0}^n f_k$. Then,
\begin{equation*}
\mathfrak{L}_d f_k = (k)_d\, f_k,
\end{equation*}
where $\mathfrak{L}_d$ is the $d$-th order Laplacian \eqref{Laplace}, 
and $(k)_d = k(k-1) \cdots (k-d+1)$. Thus, the $k$-th Hoeffding term 
is an eigenfunction of $\mathfrak{L}_d$ with eigenvalue $(k)_d$.
\end{satz}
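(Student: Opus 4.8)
The plan is to reduce the statement to a combinatorial count by analysing how each first-order operator $\mathfrak{D}_i$ acts on the individual kernels $h_I$ appearing in the Hoeffding decomposition. Recall from \eqref{Hoeffding} that $f_k = \sum_{|I| = k} h_I$, where for a set $I = \{i_1 < \ldots < i_k\}$ the term $h_I = h_{i_1 \ldots i_k}(X_{i_1}, \ldots, X_{i_k})$ depends only on the coordinates indexed by $I$ and satisfies $\mathbb{E}_i\, h_I = 0$ for every $i \in I$. The key observation is that $\mathfrak{D}_i = Id - \mathbb{E}_i$ acts on such a kernel by a clean dichotomy: if $i \in I$, then $\mathbb{E}_i h_I = 0$ and hence $\mathfrak{D}_i h_I = h_I$; if $i \notin I$, then $h_I$ does not depend on $X_i$, so $\mathbb{E}_i h_I = h_I$ and $\mathfrak{D}_i h_I = 0$. (Equivalently, this can be read off from \eqref{HT}, using that the $\mathbb{E}_j$ act on distinct coordinates and that $\mathbb{E}_i$ is idempotent.) Moreover, since the operators $\mathbb{E}_i$ act on distinct coordinates, they commute, and so do the $\mathfrak{D}_i$.

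Next I would iterate this dichotomy. For pairwise distinct indices $i_1, \ldots, i_d$, repeated application gives
\[
\mathfrak{D}_{i_1} \cdots \mathfrak{D}_{i_d}\, h_I =
\begin{cases} h_I, & \{i_1, \ldots, i_d\} \subseteq I,\\ 0, & \text{otherwise}, \end{cases}
\]
since each factor either leaves $h_I$ unchanged (when its index lies in $I$) or annihilates it (otherwise). Summing over all ordered $d$-tuples of pairwise distinct indices in $\{1, \ldots, n\}$ therefore yields
\[
\mathfrak{L}_d h_I = \sum_{1 \le i_1 \ne \ldots \ne i_d \le n} \mathfrak{D}_{i_1} \cdots \mathfrak{D}_{i_d}\, h_I = \#\{(i_1, \ldots, i_d) : i_1, \ldots, i_d \in I \text{ pairwise distinct}\}\cdot h_I.
\]
If $|I| = k$, this cardinality is precisely the number of injective words of length $d$ over an alphabet of size $k$, namely $k(k-1)\cdots(k-d+1) = (k)_d$ (which equals $0$ when $k < d$). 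Hence $\mathfrak{L}_d h_I = (k)_d h_I$, and summing over all $I$ with $|I| = k$ gives $\mathfrak{L}_d f_k = (k)_d f_k$.

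There is no analytic obstruction: the Hoeffding decomposition of an $L^1$-function is a finite sum of kernels on which each $\mathbb{E}_i$, hence each $\mathfrak{D}_i$, is well defined, so all manipulations above are legitimate. The only point requiring any care — and the heart of the argument — is the dichotomy for the action of $\mathfrak{D}_i$ on $h_I$, which rests on exactly the two defining features of the Hoeffding decomposition: the localisation of $h_I$ to the coordinates in $I$, and the degeneracy $\mathbb{E}_i h_I = 0$ for $i \in I$. Once that is established, the eigenvalue $(k)_d$ emerges purely as a counting quantity, and one obtains as a byproduct that $\mathfrak{L}_d$ is a polynomial of degree $d$ in $\mathfrak{L}_1$, consistent with the discussion preceding the theorem.
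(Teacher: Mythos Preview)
Your proof is correct and follows essentially the same approach as the paper's: both establish the dichotomy $\mathfrak{D}_i h_I = h_I$ if $i \in I$ and $\mathfrak{D}_i h_I = 0$ otherwise, iterate it to compute $\mathfrak{D}_{i_1}\cdots\mathfrak{D}_{i_d} h_I$, and then count the ordered $d$-tuples of distinct indices from a $k$-element set to obtain the eigenvalue $(k)_d$. The only difference is cosmetic --- you act on a single kernel $h_I$ and sum over $I$ at the end, whereas the paper carries the full sum $f_k$ through the computation.
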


\vskip2mm
Consequently, there is an orthogonal decomposition of $L^2$-functions $f(X)$ 
on which the Laplacian $\mathfrak{L}_d$ operates diagonally, and the eigenvalues 
of the Hoeffding terms of order up to $d-1$ are $0$.

\vskip2mm
\begin{proof}
Write 
$f_k(X_1, \ldots, X_n) = 
\sum_{j_1< \ldots < j_k} h_{j_1 \ldots j_k}(X_{j_1}, \ldots, X_{j_k})$ as 
in \eqref{Hoeffding}. Fix $j_1 < \ldots < j_k$. Then, we get
$$
\mathbb{E}_i h_{j_1 \ldots j_k}(X_{j_1} \ldots, X_{j_k}) = 
\begin{cases} 
0, & i \in \{j_1, \ldots, j_k\}, \\
h_{j_1 \ldots j_k}(X_{j_1}, \ldots, X_{j_k}), & i \notin \{j_1, \ldots, j_k\}. 
\end{cases}
$$
Therefore,
\begin{equation}
\label{D_i f_d}
\mathfrak{D}_if_d(X_1, \ldots, X_n) = 
\sum_{\substack{j_1< \ldots < j_k \\ i \in \{j_1, \ldots, j_k\}}} 
h_{j_1 \ldots j_k}(X_{j_1},\ldots, X_{j_k})
\end{equation}
and consequently, by iteration,
\begin{equation}
\label{D_ij f_d}
\mathfrak{D}_{i_1 \ldots i_d}f_k(X_1, \ldots, X_n) = 
\sum_{\substack{j_1< \ldots < j_k \\ i_1, \ldots, i_d \in \{j_1, \ldots, j_k\}}}
h_{j_1 \ldots j_d}(X_{j_1}, \ldots, X_{j_k}).
\end{equation}

It remains to check how often each term 
$h_{j_1 \ldots j_d}(X_{j_1}, \ldots, X_{j_d})$ appears in 
$\mathfrak{L}_d f_k = 
\sum_{i_1 \neq \ldots \neq i_d} \mathfrak{D}_{i_1 \ldots i_d} f_k$. 
As we just saw, each $d$-tuple $i_1 \neq \ldots \neq i_d$ such that 
$i_1, \ldots, i_d \in \{j_1, \ldots, j_k\}$ replicates the summand 
$h_{j_1 \ldots j_d}(X_{j_1}, \ldots, X_{j_d})$ precisely once. As there are 
$k (k-1) \cdots (k-d+1) = (k)_d$ such tuples, we arrive at the result.
\end{proof}

\vskip2mm
There is a ``partial integration'' formula involving difference 
operators. Recall that by \cite{G-S}, Lemma 5.1, the difference operators
$\mathfrak{D}_i$ are self-adjoint in the sense that 
$\mathbb{E}\, (\mathfrak{D}_i f)g = \mathbb{E} f(\mathfrak{D}_i g) = 
\mathbb{E}\, (\mathfrak{D}_i f)(\mathfrak{D}_i g)$ whenever 
$f(X)$ and $g(X)$ are in $L^2(\mathbb{P})$. In particular, 
for the Laplacians $\mathfrak{L}_d$ from \eqref{Laplace}, we have
\begin{equation}
\label{self-adjoint}
\mathbb{E}\, (\mathfrak{L}_d f) g \, = \, \mathbb{E} f (\mathfrak{L}_d g) \, = 
\sum_{i_1 \ne \ldots \ne i_d} \mathbb{E}\, (\mathfrak{D}_{i_1 \ldots i_d}f)
(\mathfrak{D}_{i_1 \ldots i_d}g).
\end{equation}
This identity can be used to obtain the following relation:

\vskip5mm
\begin{proposition}
\label{Gradient-Hesse}
Let $f(X) \in L^2(\mathbb{P})$ have the Hoeffding decomposition
$f = \sum_{m=d}^{n} f_m$. If $k \le d \le n$, then
$$
\mathbb{E}\, |\mathfrak{D}^{(k-1)} f|^2 \, \le \,
\frac{1}{d-k+1}\, \mathbb{E}\, \lvert \mathfrak{D}^{(k)} f \rvert^2.
$$
Equality holds only if $f = f_d$.
\end{proposition}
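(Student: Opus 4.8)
The plan is to diagonalize both Hilbert--Schmidt energies over the Hoeffding decomposition of $f$ and then compare the two resulting finite sums term by term.

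First I would record, for every integer $j\ge 1$, the identity
$$
\mathbb{E}\,|\mathfrak{D}^{(j)} f|^2
= \sum_{i_1 \ne \ldots \ne i_j} \mathbb{E}\,(\mathfrak{D}_{i_1 \ldots i_j} f)^2
= \mathbb{E}\,(\mathfrak{L}_j f)\,f ,
$$
which is exactly \eqref{self-adjoint} taken with $g=f$ (and with $d$ there replaced by $j$). Since $f=\sum_{m=d}^{n} f_m$ with the Hoeffding terms pairwise orthogonal in $L^2(\mathbb{P})$, and since $\mathfrak{L}_j f_m=(m)_j f_m$ by Theorem \ref{diagonal}, this gives
$$
\mathbb{E}\,|\mathfrak{D}^{(j)} f|^2 = \sum_{m=d}^{n} (m)_j\, \lVert f_m \rVert_2^2 , \qquad j = 1,2,\ldots ,
$$
and the same formula holds trivially for $j=0$ (Parseval, with $(m)_0=1$), which covers the case $k=1$. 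All quantities here are finite, since $\mathbb{E}\,(\mathfrak{D}_i g)^2 \le \mathbb{E}\,g^2$ for each $i$ and hence $\mathfrak{D}^{(k)}f \in L^2(\mathbb{P})$.

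Specializing to $j=k-1$ and $j=k$, the asserted inequality becomes
$$
\sum_{m=d}^{n} (m)_{k-1}\, \lVert f_m \rVert_2^2 \;\le\; \frac{1}{d-k+1} \sum_{m=d}^{n} (m)_{k}\, \lVert f_m \rVert_2^2 .
$$
Using $(m)_k = (m-k+1)(m)_{k-1}$, the right-hand side equals $\sum_{m=d}^{n} \frac{m-k+1}{d-k+1}(m)_{k-1}\lVert f_m\rVert_2^2$, so it suffices to check that $\frac{m-k+1}{d-k+1}\ge 1$ for every index $m\ge d$ occurring in the sum; this is immediate from $m\ge d$ and $d-k+1\ge 1>0$. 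The only bookkeeping point is to note that $(m)_{k-1}>0$ for all $m\ge d\ge k\ge 1$ (its smallest factor is $m-k+2\ge 2$), so that the term-by-term comparison is legitimate and the weights $(m)_{k-1}\lVert f_m\rVert_2^2$ are genuinely nonnegative.

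For the equality statement, I would rewrite the difference of the two sides as $\sum_{m=d}^{n}\big(\tfrac{m-k+1}{d-k+1}-1\big)(m)_{k-1}\lVert f_m\rVert_2^2 = \tfrac{1}{d-k+1}\sum_{m=d}^{n}(m-d)\,(m)_{k-1}\lVert f_m\rVert_2^2$, a sum of nonnegative terms. Equality therefore forces $(m-d)\,(m)_{k-1}\lVert f_m\rVert_2^2=0$ for each $m$, i.e. $f_m=0$ for all $m>d$, which means $f=f_d$. I do not anticipate any real obstacle here: the argument is a clean eigenvalue computation, and the only care needed is in keeping the degenerate cases $j=0$ and $m=d$ straight.
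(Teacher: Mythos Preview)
Your proof is correct and follows essentially the same approach as the paper: both arguments use the self-adjointness identity \eqref{self-adjoint} together with Theorem \ref{diagonal} to obtain $\mathbb{E}\,|\mathfrak{D}^{(j)} f|^2 = \sum_{m=d}^{n} (m)_j\,\lVert f_m\rVert_2^2$, and then compare the cases $j=k-1$ and $j=k$ term by term via $(m)_k=(m-k+1)(m)_{k-1}$ and $m\ge d$. The only organizational difference is that the paper first treats a single Hoeffding term $f_m$ and then invokes orthogonality, whereas you diagonalize directly; your treatment of the equality case is also more explicit than the paper's.
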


\vskip2mm
\begin{proof}
First, let $f = f_m$. Then, applying \eqref{self-adjoint} leads to
$$
\mathbb{E}\, \lvert \mathfrak{D}^{(k)} f_m \rvert^2 \, = 
\sum_{i_1 \ne \ldots \ne i_k} \mathbb{E}\, (\mathfrak{D}_{i_1 \ldots i_k}f_m)
(\mathfrak{D}_{i_1 \ldots i_k} f_m) = \mathbb{E} f_m (\mathfrak{L}_k f_m).
$$
Moreover, Theorem \ref{diagonal} yields $\mathfrak{L}_k f_m = (m)_k f_m$. 
Consequently,
$$
\mathbb{E}\, \lvert \mathfrak{D}^{(k)} f_m \rvert^2 = 
(m)_k\, \mathbb{E} f_m^2. \qquad (*)
$$
The same argument with $k$ replaced by $k-1$ yields
$$
\mathbb{E}\, \lvert \mathfrak{D}^{(k-1)} f_m \rvert^2 = 
(m)_{k-1} \mathbb{E} f_m^2. \qquad (**)
$$
Comparing $(*)$ and $(**)$ completes the proof in the case $f = f_m$.

For functions with arbitrary Hoeffding decomposition we shall use 
the orthogonality of the terms in the Hoeffding decomposition and obtain
$$
\mathbb{E}\, |\mathfrak{D}^{(k-1)} f|^2 = 
\sum_{m=d}^n \frac{1}{m-k+1}\, \mathbb{E}\, \lvert \mathfrak{D}^{(k)} f_m \rvert^2
\le \frac{1}{d-k+1}\, \mathbb{E}\, \lvert \mathfrak{D}^{(k)} f \rvert^2.
$$
\end{proof}

\vskip2mm
\begin{proof}[Proof of Theorem \ref{HigherOrderEfronStein}]
Due to \eqref{L2normengleichd}, it suffices to prove 
Theorem \ref{HigherOrderEfronStein} for the difference operator $\mathfrak{D}$. 
In this case, iterating Proposition \ref{Gradient-Hesse} yields 
the result.
\end{proof}

\vskip2mm
Finally, we prove \eqref{Houdre-Formel}. By orthogonality and
Proposition \ref{Gradient-Hesse},
$$
\mathbb{E}\, \lvert \mathfrak{v}^{(k)} f \rvert^2 = 
\sum_{j=k}^{n} \mathbb{E}\, \lvert \mathfrak{v}^{(k)} f_j \rvert^2 = 
\sum_{j=k}^{n} \frac{1}{(j-k)!}\, \mathbb{E}\, \lvert \mathfrak{v}^{(j)}
f_j \rvert^2
$$
for any $k = 1, \ldots, n$, which may be rewritten as
$$
\mathbb{E}\, \lvert \mathfrak{v}^{(k)} f_k \rvert^2 = \mathbb{E}\,
\lvert \mathfrak{v}^{(k)} f \rvert^2 - \sum_{j=k+1}^{n} \frac{1}{(j-k)!}\,
\mathbb{E}\, \lvert \mathfrak{v}^{(j)} f_j \rvert^2.
$$
Iteratively plugging this into \eqref{Varianzdarst}, we obtain that
$$
\mathrm{Var} f = \sum_{k=1}^{n} R_k \mathbb{E}\, \lvert
\mathfrak{v}^{(k)} f \rvert^2,\qquad R_k = 
\sum_{j=0}^{k-1} R_j \frac{1}{(k-j)!},\qquad R_0 := 1.
$$
It follows that for $k \ge 1$, $R_k = (-1)^{k+1}/k!$ which finishes the
proof.

\vskip5mm
\section{Differentiable Functions: Proofs}

Given a continuous function on an open subset $G \subset \mathbb{R}^n$, 
the equality
\begin{equation}
\label{generalizedmodulus}
|\nabla f(x)| = \limsup_{x \to y} \frac{|f(x) - f(y)|}{|x-y|}, \qquad x \in G,
\end{equation}
may be used as definition of the generalized modulus of the gradient
of $f$. The function $|\nabla f|$ is Borel measurable, and if $f$ is 
differentiable at $x$, $|\nabla f(x)|$ agrees with the Euclidean norm of the 
usual gradient. This operator preserves many identities from calculus in form 
of inequalities, such as a ``chain rule inequality''
\begin{equation}
\label{chainrule}
|\nabla T(f)| \le |T'(f)||\nabla f|,
\end{equation}
where $|T'|$ is understood according to \eqref{generalizedmodulus} again.

Using the generalized modulus of the gradient, there is an analogue of 
Lemma \ref{itGradHessindep} for the operator norms of the derivatives of 
consecutive orders:

\vskip5mm
\begin{lemma}
\label{itGradHess}
Given a $\mathcal{C}^d$-smooth function 
$f \colon G \to \mathbb{R}$, $d \in \mathbb{N}$, at all points $x \in G$,
$$
|\nabla \lvert f^{(d-1)}(x) \rvert_\mathrm{Op}| \le 
\lvert f^{(d)}(x) \rvert_\mathrm{Op}.
$$
\end{lemma}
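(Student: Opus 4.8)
The plan is to estimate directly the difference quotient in the definition \eqref{generalizedmodulus} of the generalized gradient modulus of the continuous function $g(x) := \lvert f^{(d-1)}(x)\rvert_{\mathrm{Op}}$, and to show that $\lvert\nabla g(x)\rvert \le \lvert f^{(d)}(x)\rvert_{\mathrm{Op}}$. Two structural observations drive the argument. First, on the space of symmetric $(d-1)$-forms, $\lvert\cdot\rvert_{\mathrm{Op}}$ is a genuine norm: replacing $v_1$ by $-v_1$ flips the sign of $f^{(d-1)}(x)[v_1,\dots,v_{d-1}]$, so the supremum in \eqref{Operatornorm} coincides with the supremum of the absolute values, i.e. with the injective (operator) norm; hence the reverse triangle inequality is available for $\lvert\cdot\rvert_{\mathrm{Op}}$. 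Second, since $f\in\mathcal{C}^d(G)$, every entry $\partial_{i_1\dots i_{d-1}}f$ of the hyper-matrix $f^{(d-1)}$ is $\mathcal{C}^1$, so the hyper-matrix-valued map $x\mapsto f^{(d-1)}(x)$ is differentiable, and by equality of mixed partials its derivative in direction $w$ is the one-slot contraction $f^{(d)}(x)[w,\cdot,\dots,\cdot]$ (the slot into which $w$ is fed being immaterial by symmetry of $f^{(d)}$).

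First I would apply the reverse triangle inequality to obtain, for $y$ near $x$,
$$\bigl\lvert g(y)-g(x)\bigr\rvert \;\le\; \bigl\lvert f^{(d-1)}(y)-f^{(d-1)}(x)\bigr\rvert_{\mathrm{Op}}.$$
Next, expanding each entry to first order at $x$ and collecting terms, one has, as symmetric $(d-1)$-forms,
$$f^{(d-1)}(y)-f^{(d-1)}(x) \;=\; f^{(d)}(x)[\,y-x,\cdot,\dots,\cdot\,] + R(y),$$
where $\lvert R(y)\rvert_{\mathrm{Op}}=o(\lvert y-x\rvert)$ as $y\to x$; this uses only that there are finitely many entries, so the entrywise $o(\lvert y-x\rvert)$ remainders combine into an $o(\lvert y-x\rvert)$ bound in the fixed norm $\lvert\cdot\rvert_{\mathrm{Op}}$. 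Then I would bound the contracted form, by homogeneity and the definition \eqref{Operatornorm},
$$\bigl\lvert f^{(d)}(x)[\,w,\cdot,\dots,\cdot\,]\bigr\rvert_{\mathrm{Op}} \;=\; \sup_{\lvert v_1\rvert=\dots=\lvert v_{d-1}\rvert=1}\bigl\lvert f^{(d)}(x)[w,v_1,\dots,v_{d-1}]\bigr\rvert \;\le\; \lvert f^{(d)}(x)\rvert_{\mathrm{Op}}\,\lvert w\rvert.$$
Finally, combining these three estimates with $w=y-x$, dividing by $\lvert y-x\rvert$ and letting $y\to x$, the remainder contributes nothing and one gets
$$\lvert\nabla g(x)\rvert \;=\; \limsup_{y\to x}\frac{\lvert g(y)-g(x)\rvert}{\lvert y-x\rvert} \;\le\; \lvert f^{(d)}(x)\rvert_{\mathrm{Op}},$$
which is the assertion of the lemma.

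The only genuinely delicate point — and it is bookkeeping rather than analysis — is to keep the first-order remainder uniform over the finitely many entries of $f^{(d-1)}$, so that $\lvert R(y)\rvert_{\mathrm{Op}}$ is genuinely $o(\lvert y-x\rvert)$; together with the preliminary observation that $\lvert\cdot\rvert_{\mathrm{Op}}$ is a norm (needed for the reverse triangle inequality), everything else is routine. If one prefers to avoid the reverse triangle inequality, an equivalent route is to write $g(x)=\sup_v \phi_v(x)$ over tuples $v=(v_1,\dots,v_{d-1})$ of unit vectors, with $\phi_v(x)=f^{(d-1)}(x)[v_1,\dots,v_{d-1}]$; each $\phi_v$ is $\mathcal{C}^1$ with $\lvert\nabla\phi_v(x)\rvert\le\lvert f^{(d)}(x)\rvert_{\mathrm{Op}}$, and the continuity of $x\mapsto\lvert f^{(d)}(x)\rvert_{\mathrm{Op}}$ yields a common local Lipschitz bound for the $\phi_v$ near $x$, which passes to their supremum $g$ and gives the same inequality. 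I would present the first, more self-contained version.
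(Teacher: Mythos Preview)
Your proof is correct and follows essentially the same route as the paper: the reverse triangle inequality for $\lvert\cdot\rvert_{\mathrm{Op}}$, a first-order Taylor expansion of $f^{(d-1)}$ with a remainder that is $o(|h|)$ uniformly in the unit vectors $v_1,\dots,v_{d-1}$, and the bound $\lvert f^{(d)}(x)[v_1,\dots,v_{d-1},h]\rvert \le \lvert f^{(d)}(x)\rvert_{\mathrm{Op}}|h|$. Your additional remarks (that $\lvert\cdot\rvert_{\mathrm{Op}}$ is genuinely a norm, and the alternative supremum-of-$\phi_v$ argument) are correct but not needed beyond what the paper uses.
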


\vskip2mm
\begin{proof}
Indeed, for any $h \in \mathbb{R}^n$, by the triangle inequality,
\begin{align*}
 & 
\big|\,\lvert f^{(d-1)}(x+h) \rvert_\mathrm{Op} - 
\lvert f^{(d-1)}(x) \rvert_\mathrm{Op}\big|
 \, \le \,
\lvert f^{(d-1)}(x+h) - f^{(d-1)}(x) \rvert_\mathrm{Op}\\
 = \, &
\sup \{ (f^{(d-1)}(x+h) - f^{(d-1)}(x))[v_1, \ldots, v_{d-1}] 
\colon v_1, \ldots, v_{d-1} \in S^{n-1} \},
\end{align*}
while, by the Taylor expansion,
$$
(f^{(d-1)}(x+h) - f^{(d-1)}(x))[v_1, \ldots, v_{d-1}] = 
f^{(d)}(x)[v_1, \ldots, v_{d-1}, h] + o(|h|)
$$
as $h \to 0$. Here, the $o$-term can be bounded by a quantity which is 
independent of $v_1, \ldots, v_{d-1} \in S^{n-1}$. As a consequence,
\begin{align*}
 &
\limsup_{h \to 0} 
\frac{|\,\lvert f^{(d-1)}(x+h) \rvert_\mathrm{Op} - \lvert f^{(d-1)}(x) 
\rvert_\mathrm{Op}|}{|h|}\\
 \le \; &
\sup \{ f^{(d)}(x)[v_1, \ldots, v_{d-1}, v_d] \colon v_1, \ldots, v_d \in S^{n-1}\}
 = \lvert f^{(d)}(x) \rvert_\mathrm{Op}.
\end{align*}
\end{proof}

\vskip2mm
For higher order concentration we need to establish a recursion for 
the $L^p$-norms of the derivatives 
of $f$ of consecutive orders similarly to \eqref{Ineq1}. To this end, 
we recall a classical result on the moments of Lipschitz functions in the
presence of a logarithmic Sobolev inequality which goes back to Aida and 
Stroock \cite{A-S}. Namely, if a probability measure $\mu$ on $G$ 
satisfies a logarithmic Sobolev inequality \eqref{LSI1} with constant 
$\sigma^2$, then, for any locally Lipschitz function
$g \colon G \to \mathbb{R}$, and any $p > 2$,
\begin{equation}\label{moments}
\lVert g \rVert_p^2  \, \le \, \lVert g \rVert_2^2 +
\sigma^2(p-2)\, \lVert \nabla g \rVert_p^2.
\end{equation}
For the reader's convenience, let us briefly recall the argument. 
We may assume $g$ to be bounded, in which case the squares of
the $L_p(\mu)$-norms of $g$ have derivatives
\begin{equation}\label{Abl}
\frac{d}{dp}\, \lVert g \rVert_p^2 = 
\frac{2}{p^2}\, \lVert g \rVert_p^{2-p}\, \text{Ent}_\mu(|g|^p).
\end{equation}
We apply this identity to the function $u = |g|^{p/2}$. By the chain rule inequality 
\eqref{chainrule}, $|\nabla u|^2 \le \frac{p^2}{4}\, |g|^{p-2}\, |\nabla g|^2$.
Hence, by H\"older's inequality,
$$
\int |\nabla u|^2\, d\mu \le 
\frac{p^2}{4} \Big(\int|g|^p\,d\mu\Big)^{\frac{p-2}{p}}
\Big(\int |\nabla g|^p\,d\mu\Big)^{\frac{2}{p}} = 
\frac{p^2}{4} \lVert g \rVert_p^{p-2}\lVert \nabla g \rVert_p^2.
$$
Applying \eqref{LSI1} to the function $u$, we therefore obtain
$$
\text{Ent}_\mu (|g|^p) = \text{Ent}_\mu (u^2) \le 
2 \sigma^2 \int |\nabla u|^2\, d\mu \le 
\frac{p^2 \sigma^2}{2}\, \lVert g \rVert_p^{p-2}\, \lVert \nabla g \rVert_p^2.
$$
Combining this with \eqref{Abl}, we arrive at the differential inequality 
$\frac{d}{dp}\, \lVert g \rVert_p^2 \le \sigma^2 \lVert \nabla g \rVert_p^2$. 
Integrating it from $2$ to $p$ yields \eqref{moments}.

Combining Lemma \ref{itGradHess} and \eqref{moments}, we are now able 
to prove Theorem \ref{kontinuierlich}.

\vskip2mm
\begin{proof}[Proof of Theorem \ref{kontinuierlich}]
Using \eqref{moments} with $f$ replaced by 
$\lvert f^{(k-1)} \rvert_\mathrm{Op}$, $1 \le k \leq d$, we get
\begin{align}
\label{MomenteiteriertdiffA}
\begin{split}
\lVert f^{(k-1)} \rVert_{\mathrm{Op}, p}^2 
 & \le 
\lVert f^{(k-1)} \rVert_{\mathrm{Op}, 2}^2 + 
\sigma^2 (p-2)\, \lVert \nabla \lvert f^{(k-1)} \rvert_\mathrm{Op} \rVert_p^2\\
 & \le 
\lVert f^{(k-1)} \rVert_{\mathrm{Op}, 2}^2 + 
\sigma^2 (p-2)\, \lVert f^{(k)} \rVert_{\mathrm{Op}, p}^2,
\end{split}
\end{align}
where  Lemma \ref{itGradHess} was applied on the last step. Consequently, 
by iteration,
\begin{align}
\lVert f \rVert_p^2 
 & \ \le \
\lVert f \rVert_2^2 + \sum_{k=1}^{d-1} (\sigma^2 (p-2))^k\, \lVert f^{(k)} 
\rVert_{\mathrm{Op}, 2}^2	+ 
(\sigma^2 (p-2))^d\, \lVert f^{(d)} \rVert_{\mathrm{Op}, p}^2\notag\\
 & \ \le \
\sigma^2\, \lVert \nabla f \rVert_2^2 + 
\sum_{k=1}^{d-1} (\sigma^2 (p-2))^k\, \lVert f^{(k)} \rVert_{\mathrm{Op}, 2}^2 + 
(\sigma^2 (p-2))^d\, \lVert f^{(d)} \rVert_{\mathrm{Op}, p}\notag\\
 & \ \le \ \sum_{k=1}^{d-1} (\sigma^2 p)^k\, \lVert f^{(k)} 
\rVert_{\mathrm{Op}, 2}^2 + 
(\sigma^2 p)^d\, \lVert f^{(d)} \rVert_{\mathrm{Op}, p}^2.\label{pfstep}
\end{align}
Here, the second step was based on the Poincar\'e-type inequality.
Since 
$\lVert f^{(k)} \rVert_{\mathrm{Op}, 2}^2 \le \min(1, \sigma^{2(d-k)})$ 
for all $k = 1, \ldots, d-1$ and 
$\lVert f^{(d)} \rVert_{\mathrm{Op}, \infty} \le 1$
by assumption, we obtain
\begin{equation}
\label{MomenteiteriertdiffB}
\lVert f \rVert_p^2 \, \le \, \sigma^{2d} \sum_{k=1}^{d} p^k \, \le \,
\frac{1}{1 - p^{-1}}\, (\sigma^2 p)^d \, \le \, 2\, (\sigma^2 p)^d
\end{equation}
and therefore
$
\lVert f \rVert_p \le (2 \sigma^2 p)^{d/2}
$
for all $p \ge 2$. Moreover,
$
\lVert f \rVert_p \le \lVert f \rVert_2 \le (4 \sigma^2)^{d/2}
$
for $p < 2$. It follows that
$
\lVert \lvert f \rvert^{2/d} \rVert_k \le \gamma k
$
for all $k \in \mathbb{N}$, i.\,e. \eqref{subexp} with
$\gamma = 4 \sigma^2$.
\end{proof}

\vskip2mm
\begin{proof}[Proof of Theorem \ref{kontinuierlichmAbl}]
Starting as in the proof of Theorem \ref{kontinuierlich}, we arrive at
\begin{align}
\lVert f \rVert_p^2 
 & \le 
\sum_{k=1}^{d-1} (\sigma^2 p)^k\, \lVert f^{(k)} \rVert_{\mathrm{HS}, 2}^2 + 
(\sigma^2 p)^d\, \lVert f^{(d)} \rVert_{\mathrm{Op}, p}^2,\label{MomenteiteriertA}
\end{align}
where we used the property that the operator norms are dominated by 
the Hilbert--Schmidt norms. Moreover, since 
$\int_G \partial_{i_1 \ldots i_k} f\, d\mu = 0$,
by the Poincar\'e-type inequality,
$$
\int_G (\partial_{i_1 \ldots i_k} f)^2\, d\mu \le 
\sigma^2 \sum_{j=1}^{n} \int_G (\partial_{i_1 \ldots i_k j} f)^2\, d\mu
$$
whenever $1 \le i_1, \ldots, i_k \le n$, $k \leq d-1$. Summing over all 
$1 \le i_1, \ldots, i_k \le n$, we get
\begin{equation}
\label{MomenteiteriertB}
\lVert f^{(k)} \rVert_{\mathrm{HS}, 2}^2 = 
\int_G \lvert f^{(k)} \rvert_\mathrm{HS}^2\, d\mu \le 
\sigma^2 \int_G \lvert f^{(k+1)} \rvert_\mathrm{HS}^2\, d\mu = 
\sigma^2\, \lVert f^{(k+1)} \rVert_{\mathrm{HS}, 2}^2.
\end{equation}
Using \eqref{MomenteiteriertB} in \eqref{MomenteiteriertA} and iterating, 
we thus obtain
\begin{equation*}
\lVert f \rVert_p^2 \le 
\sum_{k=1}^{d-1} \sigma^{2d} p^k\, \lVert f^{(d)} 
\rVert_{\mathrm{HS}, 2}^2 + 
(\sigma^2 p)^d\, \lVert f^{(d)} \rVert_{\mathrm{Op}, p}^2.
\end{equation*}
Noting that $\lVert f^{(d)} \rVert_{\mathrm{HS}, 2} \le 1$ and 
$\lVert f^{(d)} \rVert_{\mathrm{Op}, \infty} \le 1$, we arrive at 
\eqref{MomenteiteriertdiffB},
from where we may proceed as in the proof of Theorem \ref{kontinuierlich}.
\end{proof}

\vskip2mm
\begin{proof}[Proof of Corollary \ref{KorrTailskont}]
For any $p \ge 2$, it follows from \eqref{pfstep} that
$$
e\,\lVert f \rVert_p  \, \le \, e\,\Big(\sum_{k=1}^{d-1} (\sigma^2 p)^{k/2}\,
\lVert f^{(k)} \rVert_{\mathrm{Op}, 2} + 
(\sigma^2 p)^{d/2}\, \lVert f^{(d)} \rVert_{\mathrm{Op}, \infty}\Big).
$$
From here we may proceed as in the proof of Corollary \ref{KorrTails}.
\end{proof}

\vskip5mm
\section{Functions on the Sphere: Proofs}

First, let us recall some basic facts about the spherical calculus
(cf. \cite{S-W} or e.\,g. \cite{B-C-G}). 
The normalized Lebesgue measure $\sigma_{n-1}$ on the unit sphere
$S^{n-1}$ can be introduced as the distribution of $Z/|Z|$, assuming 
that the random vector $Z$ has a standard normal distribution in 
$\mathbb{R}^n$. Using independence of $|Z|$ and $Z/|Z|$, 
this description implies that for any $p$-homogeneous 
function $f \colon \mathbb{R}^n \setminus \{0\} \to \mathbb{R}$,
\begin{equation}
\label{p-homog}
\int_{\mathbb{R}^n} f(x)\, d\gamma_n(x) = 
\mathbb{E}\,|Z|^p \int_{S^{n-1}} f(\theta)\, d\sigma_{n-1}(\theta)
\end{equation}
provided that all the integrals involved exist.

A function $f$ on $S^{n-1}$ is called $\mathcal{C}^d$-smooth if it can be 
extended to a $\mathcal{C}^d$-smooth function on some open subset $G$ of 
$\mathbb{R}^n$ containing the unit sphere. If $f$ is $\mathcal{C}^1$-smooth 
on $S^{n-1}$, then at every point $\theta \in S^{n-1}$ it admits 
the Taylor expansion
\begin{equation}\label{Taylor}
f(\theta') = f(\theta) + \langle v, \theta' - \theta \rangle + 
o(|\theta' - \theta|)\qquad \text{as $\theta' \to \theta$,}\quad \theta' \in S^{n-1}
\end{equation}
with some $v \in \mathbb{R}^n$. Among all the vectors $v$ fulfilling 
\eqref{Taylor}, the one with smallest Euclidean norm represents
the spherical derivative or gradient of $f$ at $\theta$ and is denoted 
$\nabla_S f (\theta)$. Equivalently, in terms of the usual (Euclidean) 
gradient, we have
$$
\nabla_S f (\theta) \, = \, P_{\theta^\perp} \nabla f (\theta) \, = \,
\nabla f (\theta) - \langle \nabla f (\theta), \theta \rangle\, \theta,
$$
where $P_{\theta^\perp}$ denotes the orthogonal projection from $\mathbb{R}^n$ 
to the tangent space $\theta^\perp$. In particular,
$|\nabla_S f (\theta)| \le |\nabla f(\theta)|$ for all $\theta \in S^{n-1}$. 
If $f$ is a $\mathcal{C}^1$-function on $S^{n-1}$, the norm
of the spherical gradient $|\nabla_S f|$ coincides with the generalized 
modulus of the gradient  \eqref{generalizedmodulus} using
either the geodesic or the induced Euclidean distance on $S^{n-1}$.

By a result of Mueller and Weissler \cite{M-W}, the uniform measure 
$\sigma_{n-1}$ on $S^{n-1}$ satisfies a logarithmic Sobolev inequality with
constant $\sigma^2 = \frac{1}{n-1}$. In other words, 
\begin{equation}
\label{Mueller-Weissler}
\text{Ent}_{\sigma_{n-1}} (f^2) \le 
\frac{2}{n-1} \int |\nabla_S f|^2\, d\sigma_{n-1}
\end{equation}
for any smooth $f \colon S^{n-1} \to \mathbb{R}$.
Therefore, considering any open neighbourhood $G$ of $S^{n-1}$, 
we may regard $\sigma_{n-1}$ as a Borel probability measure on $G$ satisfying 
a logarithmic Sobolev inequality with constant $\sigma^2 = \frac{1}{n-1}$. 
Hence, Theorem \ref{sphere} directly follows from Theorem \ref{kontinuierlich}. 
It remains to note that in the notation of Theorem \ref{kontinuierlich},
$$
\min(1, \sigma^{d-k}) \ge n^{-(k-d)/2},
$$
hence arriving at the conditions used in Theorem \ref{sphere}.

In a similar way, we now prove Theorem \ref{spheremAbl}.

\vskip2mm
\begin{proof}[Proof of Theorem \ref{spheremAbl}]
It follows from 
Theorem \ref{kontinuierlichmAbl}, once the partial derivatives up to order 
$d-1$ are centered under $\sigma_{n-1}$. Indeed, we may assume that 
$f$ is defined on $\mathbb{R}^n \setminus \{0\}$ (cf. \eqref{p-homext}). 
Fix any $i_1 \le \ldots \le i_k$, $k \leq d-1$. Noting that 
$x_{i_1} \cdots x_{i_k} f(x)$ is $(p+k)$-homogeneous, by \eqref{p-homog} 
and a $k$-fold partial integration, we obtain
\begin{align*}
\int_{\mathbb{R}^n} \partial_{i_1 \ldots i_k} f(x)\, d\gamma_n(x) 
 & = 
\int_{\mathbb{R}^n} x_{i_1} \cdots x_{i_k} f(x)\, d\gamma_n(x)\\
 & = 
\mathbb{E}\,|Z|^{p+k} \int_{S^{n-1}} \theta_{i_1} \cdots \theta_{i_k} 
f(\theta)\, d\sigma_{n-1}(\theta).
\end{align*}
On the other hand, since $f$ is $p$-homogeneous, $\partial_{i_1 \ldots i_k} f$ 
is $(p-k)$-homogeneous. Therefore, applying \eqref{p-homog} again,
\begin{equation*}
\int_{\mathbb{R}^n} \partial_{i_1 \ldots i_k} f(x)\, d\gamma_n(x) = 
\mathbb{E}\,|Z|^{p-k} \int_{S^{n-1}} \partial_{i_1 \ldots i_k} 
f(\theta)\, d\sigma_{n-1}(\theta).
\end{equation*}
Hence, orthogonality to all polynomials of total degree at most $d-1$ 
implies that the partial derivatives up to order $d-1$ are centered
(if the involved integrals exist). Since $f$ is a $\mathcal{C}^d$-function 
(hence bounded on $S^{n-1}$), this holds true if $\mathbb{E}\,|Z|^{p-k} < \infty$ 
for all $k = 0, 1, \ldots, d-1$, which in turn is satisfied iff $p-(d-1)+n>0$. 
\end{proof}

\vskip2mm
To prove Theorem \ref{sphere-intr}, we need some 
further details about spherical derivatives. First note that $\nabla_S f$ 
is a vector-valued function on $S^{n-1}$, and hence we may define spherical 
partial derivatives of first and higher orders as suggested in Section 1.4.

Any function $f$ on $S^{n-1}$ can be extended to a $p$-homogeneous function 
$F$ on $\mathbb{R}^n$ (where $p \in \mathbb{R}$) by putting
\begin{equation}\label{p-homext}
F(x) = 
\begin{cases}
r^p f(\theta), & x \ne 0, \\ 0, & x = 0,
\end{cases}\qquad r = |x|,\quad \theta = x/|x|.
\end{equation}
If $f$ is $\mathcal{C}^d$-smooth, its $p$-homogeneous extension $F$ 
will be $\mathcal{C}^d$-smooth on $\mathbb{R}^n \setminus \{0\}$.

The spherical derivative $\nabla_S f$ of a $\mathcal{C}^1$-smooth function 
on $S^{n-1}$ and the derivatives of its $p$-homogeneous extensions $F$ are
related by the identity
$$
\nabla F(x) = r^{p-1}\, [p f(\theta) \theta + \nabla_S f (\theta)], \qquad
x \ne 0
$$
(see \cite{B-C-G}, Proposition 12.1). In particular, for the $0$-homogeneous 
extension $F^{(0)}(x) = f(\theta)$, $\nabla F^{(0)}$ is $(-1)$-homogeneous 
and is given by
\begin{equation}\label{Abl0homog}
\nabla F^{(0)}(x) = r^{-1} \nabla_S f(\theta),
\end{equation}
so that $\nabla F^{(0)} = \nabla_S f$ on $S^{n-1}$. In other words, 
$\partial_i F^{(0)} = D_i f$ on $S^{n-1}$, where $\partial_i$ and $D_i$ 
denote the partial and spherical partial derivatives, respectively.

By iteration, we can retrieve spherical partial derivatives of any order 
from suitable homogeneous extensions. To start, note that 
$F^{(1)}(x) = r \nabla F^{(0)}(x)$ is a $0$-homogeneous vector-valued 
function on $\mathbb{R}^n \setminus \{0\}$. It follows that for any 
$1 \le i, j \le n$, $\partial_i F^{(1)}_j$ is a $(-1)$-homogeneous function with
$$
\partial_i F^{(1)}_j(x) = \partial_i (r \partial_j F^{(0)}(x)) = 
r^{-1} D_i \langle \nabla_S f(\theta), e_j \rangle = r^{-1} D_{ij} f(\theta),
$$
so that $\partial_i F^{(1)}_j = D_{ij} f$ on $S^{n-1}$.
For general $k \in \mathbb{N}$, we may therefore define
\begin{equation}
\label{sphparder1}
F^{(k)}(x) = r \nabla F^{(k-1)}(x),
\end{equation}
which is a $0$-homogeneous function on $\mathbb{R}^n \setminus \{0\}$ 
taking values in $\mathbb{R}^{n^k}$. Arguing as above, for any 
$1 \le i_1, \ldots, i_k \le n$,
\begin{equation}
\label{sphparder2}
F^{(k)}_{i_1 \ldots i_k} = D_{i_1 \ldots i_k} f
\end{equation}
on $S^{n-1}$, where $D_{i_1 \ldots i_k} f$ denotes the $k$-th order spherical 
partial derivatives of $f$.

With the help of the $0$-homogeneous functions $F^{(k)}$, $k = 0, 1, \ldots, d$, 
it is now possible to adapt the key steps of the proof of Theorem
\ref{kontinuierlich}.

\vskip5mm
\begin{lemma}
\label{itGradHesssph}
If $f \colon S^{n-1} \to \mathbb{R}$  is $\mathcal{C}^d$-smooth, then
for all $\theta \in S^{n-1}$,
$$
|\nabla\lvert D^{(d-1)}f(\theta) \rvert\,| \le \lvert D^{(d)}f(\theta) \rvert.
$$
\end{lemma}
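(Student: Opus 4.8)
The plan is to reduce the statement to the ambient Euclidean computation already performed in the proof of Lemma~\ref{itGradHess}, using the bookkeeping identity $F^{(d)}(x)=r\,\nabla F^{(d-1)}(x)$ from \eqref{sphparder1} to pass between the $0$-homogeneous hyper-matrix fields $F^{(k)}$ and the spherical derivatives $D^{(k)}f$.

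First I would collect the facts assembled above: by \eqref{sphparder2} one has $D^{(k)}f=F^{(k)}$ on $S^{n-1}$ for every $k\le d$, and each $F^{(k)}$ is $0$-homogeneous on $\mathbb{R}^n\setminus\{0\}$ and, being obtained from the $\mathcal{C}^d$-function $F^{(0)}$ by applying $\nabla$ exactly $k$ times (up to harmless multiplications by $r$ away from the origin), is $\mathcal{C}^{d-k}$-smooth there; in particular $F^{(d-1)}$ is $\mathcal{C}^1$ and admits a first-order Taylor expansion. Next I would observe that the function $x\mapsto|F^{(d-1)}(x)|$ is $0$-homogeneous as well, hence constant along rays, so that its Euclidean generalized gradient modulus at a point $\theta\in S^{n-1}$ agrees with the generalized modulus of the gradient of the intrinsic function $\theta\mapsto|D^{(d-1)}f(\theta)|$ on $S^{n-1}$ --- this is precisely the quantity $|\nabla|D^{(d-1)}f(\theta)||$ appearing in the statement --- by the equivalence of the geodesic and induced-Euclidean generalized moduli recalled at the start of Section~5.

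The core estimate is then a near-verbatim repetition of the argument in Lemma~\ref{itGradHess}, now for the vector-valued map $F^{(d-1)}$: for $x\in\mathbb{R}^n\setminus\{0\}$ and $h\in\mathbb{R}^n$ the triangle inequality gives $\big|\,|F^{(d-1)}(x+h)|-|F^{(d-1)}(x)|\,\big|\le|F^{(d-1)}(x+h)-F^{(d-1)}(x)|$, while the $\mathcal{C}^1$ Taylor expansion gives $F^{(d-1)}(x+h)-F^{(d-1)}(x)=(\nabla F^{(d-1)}(x))[h]+o(|h|)$ with a remainder controlled uniformly over the remaining $d-1$ test directions; dividing by $|h|$ and letting $h\to 0$ yields $|\nabla|F^{(d-1)}(x)||\le\sup_{|v|=1}|(\nabla F^{(d-1)}(x))[v]|$. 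Specializing to $x=\theta\in S^{n-1}$, where $r=|\theta|=1$ and hence $\nabla F^{(d-1)}(\theta)=F^{(d)}(\theta)=D^{(d)}f(\theta)$, and bounding $\sup_{|v|=1}|D^{(d)}f(\theta)[v]|$ by $|D^{(d)}f(\theta)|$ --- immediate for the operator norm from its definition as a supremum over all unit test vectors, and for the Euclidean norm by the Cauchy--Schwarz inequality applied in the contracted index --- gives the claim.

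The only genuinely delicate point, and the one I would be most careful about, is the identification in the second paragraph: that the intrinsic spherical gradient modulus of $\theta\mapsto|D^{(d-1)}f(\theta)|$ coincides with the ambient Euclidean one. This hinges on $F^{(d-1)}$ being exactly the $0$-homogeneous extension, so that radial motion does not change $|F^{(d-1)}|$ and the normal derivative drops out. Everything else is a routine transcription, since $F^{(d-1)}$ is a bona fide $\mathcal{C}^1$ map on $\mathbb{R}^n\setminus\{0\}$ whose full first derivative restricts on the sphere to the hyper-matrix $D^{(d)}f$.
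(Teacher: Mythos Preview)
Your proposal is correct and follows exactly the approach the paper takes: observe that $|F^{(d-1)}|$ is the $0$-homogeneous extension of $|D^{(d-1)}f|$, so that the intrinsic and ambient gradient moduli agree on $S^{n-1}$, and then repeat the triangle-inequality-plus-Taylor argument of Lemma~\ref{itGradHess} to bound the result by $|D^{(d)}f(\theta)|$. You have simply written out in full what the paper compresses into one sentence, and your care about the identification of intrinsic and Euclidean gradients via $0$-homogeneity is precisely the point the paper's one-line proof is implicitly invoking.
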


\vskip2mm
\begin{proof}
Noting that $|F^{(d-1)}(x)|$ is the $0$-homogeneous extension of 
$|D^{(d-1)} f(\theta)|$, we may perform similar arguments as in the proof of 
Lemma \ref{itGradHess}.
\end{proof}

\begin{proof}[Proof of Theorem \ref{sphere-intr}]
Since the uniform distribution on the sphere satisfies 
a logarithmic Sobolev inequality \eqref{Mueller-Weissler}, the moment recursion 
\eqref{moments} from the proof of Theorem \ref{kontinuierlich} remains valid 
for functions on the sphere with intrinsic derivatives.
Hence, using \eqref{moments} with $f$ replaced by 
$\lvert D^{(k-1)} f \rvert_\mathrm{Op}$ for $k = 1, \ldots, d$, we obtain
\begin{align*}
\lVert D^{(d-1)}f \rVert_{\mathrm{Op}, p}^2 
 & \le 
\lVert D^{(d-1)}f \rVert_{\mathrm{Op}, 2}^2 + 
\sigma^2 (p-2)\, \lVert \nabla^* \lvert D^{(d-1)}f \rvert_\mathrm{Op} \rVert_p^2\\
 & \le 
\lVert D^{(d-1)}f \rVert_{\mathrm{Op}, 2}^2 + 
\sigma^2 (p-2)\, \lVert D^{(d)}f \rVert_{\mathrm{Op}, p}^2.
\end{align*}
Here the last step relies upon Lemma \ref{itGradHesssph}. But this is 
the same inequality as \eqref{MomenteiteriertdiffA}. Therefore, 
the rest of the proof is analogous to the proof of 
Theorem \ref{kontinuierlich}.
\end{proof}

\vskip5mm
\section{Polynomials and Edgeworth-type Expansions on the Sphere}

For vectors $a = (a_1, \ldots, a_n) \in \mathbb{R}^n$ and an integer
$d \ge 3$, consider the polynomials
$$
Q_{d, a} (\theta) = \sum_{i=1}^{n} a_i \theta_i^d,\qquad \theta \in S^{n-1},
$$
extending them to $\mathbb{R}^n$ by setting 
$Q_{d,a}(x) = \sum_{i=1}^{n} a_i x_i^d$. By an easy calculation,
$$
\int_{S^{n-1}} \theta_i^{2p}\, d\sigma_{n-1}(\theta) = 
\frac{(2p-1)!!}{n(n+2) \cdots (n+2p-2)}, \qquad i = 1, \ldots, n, \ \ p \in \mathbb{N}.
$$
Therefore, differentiating $Q_{d,a} (x)$, it follows that,
for any $k = 1, \ldots, d-1$, 
$$
\lVert Q_{d,a}^{(k)} \rVert_{\mathrm{HS},2}^2 \, = \,
\frac{\big(2\,(d-k)-1\big)!!\,(d)_k^2}{n(n+2) \cdots (n+2\,(d-k-1))}\, 
\sum_{i=1}^{n} a_i^2,
$$
where we used the notation $(d)_k = d(d-1) \cdots (d-k+1)$. Similarly,
$$
\lvert Q_{d,a}^{(d)}(\theta) \rvert_\mathrm{HS}^2 \, = \, (d!)^2 \sum_{i=1}^{n} a_i^2.
$$
As a consequence, using the normalization $n^{-1} \sum_{i=1}^{n} a_i^2 = 1$ 
and choosing a suitable constant $c_d$, we see that the function 
$n^{-1/2} c_d\, Q_{d,a}$ satisfies the conditions of Theorem
\ref{sphere}. Summarizing, we arrive at the following result.

\vskip5mm
\begin{proposition}
\label{elementarePolynome}
Let $d \ge 3$ and let $n^{-1} \sum_{i=1}^{n} a_i^2 = 1$.
There exists some constant $c_d>0$ depending on $d$ only such that
$$
\int_{S^{n-1}} \exp \Big(c_d\, n^{(d-1)/d}\, 
|Q_{d,a} - \bar Q_{d,a}|^{2/d}\Big)\, d\sigma_{n-1} \, \le \, 2,
$$
where $\bar Q_{d,a}$ denotes the $\sigma_{n-1}$-mean of $Q_{d,a}$.
In particular,
$
Q_{d, a} - \bar Q_{d,a} = \mathcal{O}_{\sigma_{n-1}} \big(n^{-\frac{d-1}{2}}\big).
$
\end{proposition}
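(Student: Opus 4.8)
The plan is to deduce this directly from Theorem \ref{sphere}, using the derivative computations recorded just above together with a single rescaling. Since $\int_{S^{n-1}}(Q_{d,a}-\bar Q_{d,a})\,d\sigma_{n-1}=0$ while the partial derivatives of $Q_{d,a}-\bar Q_{d,a}$ of order $k\ge1$ agree with those of $Q_{d,a}$, I would estimate all the derivative norms on $Q_{d,a}$ itself and only recenter at the end. Note that $Q_{d,a}^{(d)}$ has no orthogonality to lower-degree polynomials, so Theorem \ref{spheremAbl} is not applicable; instead one controls the operator-norm conditions of Theorem \ref{sphere} by the Hilbert--Schmidt norms, since $|\cdot|_{\mathrm{Op}}\le|\cdot|_{\mathrm{HS}}$ pointwise and hence in every $L^p(\sigma_{n-1})$.

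Under the normalization $\sum_{i=1}^n a_i^2=n$, the displayed formulas above read
$$\lVert Q_{d,a}^{(k)}\rVert_{\mathrm{HS},2}^2=\frac{(2(d-k)-1)!!\,(d)_k^2}{n(n+2)\cdots(n+2(d-k-1))}\,n,\qquad \lvert Q_{d,a}^{(d)}(\theta)\rvert_{\mathrm{HS}}^2=(d!)^2\,n,$$
and since the denominator in the first formula is a product of $d-k$ factors, each at least $n$, one gets $\lVert Q_{d,a}^{(k)}\rVert_{\mathrm{HS},2}^2\le (2(d-k)-1)!!\,(d)_k^2\,n^{-(d-k-1)}$ for $k=1,\ldots,d-1$. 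I would then set $g:=(c_d'/\sqrt n)\,(Q_{d,a}-\bar Q_{d,a})$ and choose $c_d'>0$, depending only on $d$, small enough that $g$ satisfies the hypotheses of Theorem \ref{sphere}: a short computation shows that $\lVert g^{(k)}\rVert_{\mathrm{Op},2}\le c_d'\sqrt{(2(d-k)-1)!!}\,(d)_k\,n^{-(d-k)/2}$ and $|g^{(d)}(\theta)|_{\mathrm{Op}}\le c_d'\,d!$, so it suffices to take
$$c_d'=\min\Big(\tfrac1{d!},\ \min_{1\le k\le d-1}\big((2(d-k)-1)!!\big)^{-1/2}(d)_k^{-1}\Big).$$

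Applying Theorem \ref{sphere} to $g$ then gives $\int_{S^{n-1}}\exp\{(n-1)\,|g|^{2/d}/(8e)\}\,d\sigma_{n-1}\le2$. Substituting $|g|^{2/d}=(c_d')^{2/d}n^{-1/d}|Q_{d,a}-\bar Q_{d,a}|^{2/d}$ and using $(n-1)/n\ge\tfrac12$ for $n\ge2$, one concludes
$$\int_{S^{n-1}}\exp\Big(\tfrac{(c_d')^{2/d}}{16e}\,n^{(d-1)/d}\,|Q_{d,a}-\bar Q_{d,a}|^{2/d}\Big)\,d\sigma_{n-1}\le2,$$
which is the claim with $c_d=(c_d')^{2/d}/(16e)$. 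The $\mathcal O_{\sigma_{n-1}}(n^{-(d-1)/2})$ assertion then follows by Chebyshev's inequality: the exponential bound yields $\sigma_{n-1}(|Q_{d,a}-\bar Q_{d,a}|\ge t)\le 2\exp(-c_d n^{(d-1)/d}t^{2/d})$, and the choice $t=Mn^{-(d-1)/2}$ makes the right-hand side equal to $2e^{-c_dM^{2/d}}$, which tends to $0$ as $M\to\infty$ uniformly in $n$.

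No step here is genuinely hard: everything beyond Theorem \ref{sphere} is bookkeeping. The only point requiring care is making the rescaling uniform in $n$ and $a$ — correctly counting the $d-k$ factors in the denominator, tracking the powers of $n$ through the substitution, and absorbing the $(n-1)$-versus-$n$ discrepancy — so that the resulting constant $c_d$ depends on $d$ alone.
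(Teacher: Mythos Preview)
Your proposal is correct and follows essentially the same approach as the paper: rescale $Q_{d,a}-\bar Q_{d,a}$ by a factor $c_d'/\sqrt n$ so that the derivative bounds computed just above feed into the hypotheses of Theorem~\ref{sphere}, then unwind the rescaling. The paper's argument is the one-line remark that ``the function $n^{-1/2} c_d\, Q_{d,a}$ satisfies the conditions of Theorem~\ref{sphere}''; you have simply written out the bookkeeping (bounding $|\cdot|_{\mathrm{Op}}$ by $|\cdot|_{\mathrm{HS}}$, counting the $d-k$ factors, and absorbing the $(n-1)/n$ loss) explicitly.
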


\vskip2mm
Note though that if $d$ is odd, $\bar{Q}_{d,a} = 0$, while for $d$ even,
\begin{equation}
\label{MomenteQ}
\bar Q_{d,a} = \frac{(d-1)!!}{(n+2) \cdots (n+d-2)}\, \bar{a}, \qquad
\bar{a} = n^{-1} \sum_{i=1}^{n} a_i.
\end{equation}

In order to illustrate possible applications of these
results, consider smooth function of the form
$$
h_n(\theta) = \mathbb{E}\, H\Big(\sum_{i=1}^n \theta_i X_i\Big), \qquad 
\theta \in S^{n-1},
$$
with $X_i \in \mathbb{R}^k$ independent and 
such that $\mathrm{Cov}(X_i) = \mathrm{Id}$. For simplicity, let us assume that 
$X_i$ is symmetric, i.\,e. $X_i = - X_i$ in distribution. It is known, 
cf. \cite{G-H}, that $h_n(\theta)$ may be approximated via Edgeworth expansions,
in particular -- by the polynomial 
$
\Gamma_0 + \sum_{i=1}^{n} \Gamma_{4,i}\, \theta_i^4,
$
as long as $H \in \mathcal{C}^4(\mathbb{R}^k)$ and assuming that the quantity
$$
M = \sup_{x \in \mathbb{R}^k} \Big(\lvert H(x) \rvert + \sup_{|\alpha| = 4}\, 
\lvert \partial^\alpha H(x) \rvert \big)\big(1 + \lvert x \rvert^6\Big)^{-1}
$$
is finite. The $\Gamma$-terms are the non-vanishing even Edgeworth expansion 
terms defined by $\Gamma_0 = \mathbb{E}H(N)$, where $N$ is a standard normal
random vector in $\mathbb{R}^k$, and by
\begin{gather*}
\Gamma_{4,i} \, = \,
\frac{1}{24} 
\left(\frac{\partial^4}{\partial \varepsilon^4}\Big|_{\varepsilon = 0}\,
\mathbb{E} H(N + \varepsilon X_j) - 3 \,
\frac{\partial^4}{\partial \varepsilon_1^2 \partial \varepsilon_2^2}
\Big|_{\varepsilon_1 = \varepsilon_2 = 0} 
\mathbb{E} H(N + \varepsilon_1 X_i + \varepsilon_2 \bar{X}_i)\right),
\end{gather*}
where $X_i$, $\bar{X}_i$, $N$ are independent and $\bar{X}_i$ denotes 
an independent copy of $X_i$.

If furthermore $\rho_{p,i}= \mathbb{E}\, |X_i|^p < \infty$ for $p \le 6$, 
then an inspection of the proof of \cite{G-H}, Theorem 3.6, yields 
an explicit bound for the error
\begin{equation}
\label{Edgeworth}
R(\theta) = 
h_n(\theta) - \Big(\Gamma_0 + \sum_{i=1}^{n} \Gamma_{4,i}\, \theta_i^4\Big),
\end{equation}
namely
\begin{equation}
\label{Absch}
|R(\theta)| \, \le \,
c_M \Big(\sum_{i=1}^{n} \rho_{6,i} \theta_i^6 + 
\Big(\sum_{i=1}^{n} \rho_{3,i} |\theta_i|^3\Big)^4\Big) \, \le \,
c_M \sum_{i=1}^{n}\, \big(\rho_{6,i} + \rho_{3,i}^4\big)\, \theta_i^6
\end{equation}
with some constant $c_M$ depending on $M$ only. 

To study the asymptotic behaviour of $h_n(\theta)$ as a function of 
$\theta \in S^{n-1}$ as $n \to \infty$, we apply Proposition \ref{elementarePolynome}
together with \eqref{MomenteQ}. Here we are interested in concentration 
inequalities for $R(\theta)$, i.\,e. we do not only center around the constant 
term $\Gamma_0$ but also include the fourth order term 
$\sum_{i=1}^{n} \Gamma_{4,i} \theta_i^4$. Indeed, write
$$
Q_{6, \rho} (\theta) = 
\sum_{i=1}^{n} \big(\rho_{6,i} + \rho_{3,i}^4\big)\, \theta_i^6,
$$
so that $|R(\theta)| \le c_M Q_{6, \rho} (\theta)$ by \eqref{Absch}. 
Dividing $Q_{6, \rho}$ by 
$\rho_* = (\frac{1}{n} \sum_{i=1}^{n} (\rho_{6,i} + \rho_{3,i}^4)^2)^{1/2}$, 
we may apply Proposition \ref{elementarePolynome} with $d = 6$, which yields 
\begin{equation}
\label{mitMittel}
\int_{S^{n-1}} \exp \Big(\frac{c_6}{\rho_*^{1/3}}\, n^{5/6}\ 
|Q_{6,\rho}(\theta) - \bar Q_{6,\rho}|^{1/3}\Big)\, d\sigma_{n-1}(\theta) 
\le 2
\end{equation}
for some absolute constant $c_6 > 0$. Furthermore, by \eqref{MomenteQ},
$$
\bar Q_{6, \rho} = \frac{15}{(n+2)(n+4)}\, \bar{\rho}, \qquad
\bar{\rho} = \frac{1}{n}\, \sum_{i=1}^{n} (\rho_{6,i} + \rho_{3,i}^4).
$$
In particular,
\begin{equation}
\label{nurMittel}
\int_{S^{n-1}} \exp \Big(\frac{c_0}{\rho_*^{1/3}}\, n^{2/3} \,
|\bar Q_{6,\rho}|^{1/3} \Big)\, d\sigma_{n-1}(\theta) \le 2
\end{equation}
for some absolute constant $c_0 > 0$. Applying the Cauchy--Schwarz inequality 
together with \eqref{mitMittel} and \eqref{nurMittel}, we therefore arrive 
at the concentration inequality
\begin{equation*}
\int_{S^{n-1}} \exp \Big(\frac{c}{\rho_*^{1/3}}\, n^{2/3} \,
|R(\theta)|^{1/3} \Big)\, d\sigma_{n-1}(\theta) \le 2
\end{equation*}
for some absolute constant $c > 0$ depending on $M$ only. One may take 
$c = c_M^{1/3} \min(c_0, c_6)/2$ with $c_M$ from \eqref{Absch}.

This example is related to a general framework of symmetric functions introduced 
by G\"otze, Naumov and Ulyanov \cite{G-N-U}. Indeed, we may consider
sequences of real functions $h_n(\theta_, \ldots, \theta_n)$,
defined on $\mathbb{R}^n$ such that
\begin{align}
\label{symmCond}
\begin{split}
 & h_{n+1} (\theta_1, \ldots, \theta_j, 0, \theta_{j+1}, \ldots, \theta_n) 
 = h_n (\theta_1, \ldots, \theta_j, \theta_{j+1}, \ldots, \theta_n);\\
 & \frac{\partial}{\partial \theta_j} 
   h_n (\theta_1, \ldots, \theta_j, \ldots, \theta_n) 
	 \Big|_{\theta_j = 0} = 0\enskip \ \ \forall j = 1, \ldots, n;\\
 & h_n(\theta_{\pi(1)}, \ldots, \theta_{\pi(n)}) 
 = h_n(\theta_1, \ldots, \theta_n)\enskip \forall \pi \in S_n,
\end{split}
\end{align}
where $S_n$ denotes the symmetric group. This model may be regarded as 
a general scheme which holds true in a lot of situations where asymptotic 
expansions are considered. As shown in \cite{G-N-U}, Conditions \eqref{symmCond} 
ensure the existence of a ``limit'' function 
$h_\infty(\sum_i \theta_i^2, \lambda_1, \ldots,\lambda_s)$, $s \in \mathbb{N}_0$, 
together with ``Edgeworth-type'' asymptotic expansions. These expansions are 
given in terms of polynomials of $Q_d(\theta)$, $d \ge 3$, where 
$Q_d(\theta) = Q_{d,a^*}$ for $a^* = (1, \ldots, 1)$, and coefficients given 
by the derivatives of the limit function $h_\infty$ at 
$\lambda_1 = \ldots = \lambda_s = 0$. In particular, if we assume 
$\theta \in S^{n-1}$, applying Proposition \ref{elementarePolynome} yields 
higher order concentration results for $(h_n)_n$.

In a certain sense, this represents a higher order extension of the second 
order results by Klartag \cite{K} for distribution functions of spherical 
weighted sums for log-concave measures. See also the results by Klartag 
and Sodin \cite{K-S} for sums of independent randoms variables.

\end{document}